\numberwithin{equation}{section}
\newtheorem{theorem}{Theorem}[section]
\newtheorem{proposition}[theorem]{Proposition}
\newtheorem{lemma}[theorem]{Lemma}
\newtheorem{corollary}[theorem]{Corollary}
\theoremstyle{definition}
\newtheorem{question}[theorem]{Question}
\theoremstyle{remark}
\newcommand{\Z}{\mathbb{Z}}
\title[Higher homotopy associativity in the Harris decomposition]{Higher homotopy associativity in the Harris decomposition of Lie groups}
\author{Daisuke Kishimoto}
\address{Department of Mathematics, Kyoto University, Kyoto, 606-8502, Japan}
\email{kishi@math.kyoto-u.ac.jp}
\author{Toshiyuki Miyauchi}
\address{Department of Applied Mathematics, Faculty of Science, Fukuoka University, Fukuoka, 814-0180, Japan}
\email{miyauchi@math.sci.fukuoka-u.ac.jp}
\subjclass[2010]{55P60}
\keywords{Lie group, mod $p$ decomposition, higher homotopy associativity, higher homotopy commutativity}
\begin{document}

\baselineskip.525cm

\maketitle

\begin{abstract}
  For certain pairs of Lie groups $(G,H)$ and primes $p$, Harris showed a relation of the $p$-localized homotopy groups of $G$ and $H$. This is reinterpreted as a $p$-local homotopy equivalence $G\simeq_{(p)}H\times G/H$, and so there is a projection $G_{(p)}\to H_{(p)}$. We show how much this projection preserves the higher homotopy associativity.
\end{abstract}


\section{Introduction}

Lie groups decompose into products of small spaces when localized at a prime $p$. This is called the mod $p$ decomposition and is fundamental in the homotopy theory of Lie groups. Then it is important to study relations between the mod $p$ decomposition and the group structures (or the loop space structures) of Lie groups, and there are several results on such relations \cite{HK,HKMO,HKO,HKST,HKT,KK,K,KT,R,Th}. In this paper, we study a relation between the group structures (or the loop structures) of Lie groups and maps between Lie groups arising from the classical mod $p$ decomposition due to Harris \cite{Ha1,Ha2}.

Let $(G,H)=(SU(2n+1),SO(2n+1)),\,(SU(2n),Sp(n)),\,(SO(2n),SO(2n-1)),\,(E_6,F_4)$, $(Spin(8),G_2)$ and let $p$ be any prime $\ge 5$ for $(G,H)=(E_6,F_4)$, any prime $\ne 3$ for $(G,H)=(Spin(8),G_2)$, and any odd prime otherwise. Then $H$ is a subgroup of $G$ in the obvious way so that there is a fibration $H\to G\to G/H$. Harris \cite{Ha1,Ha2} showed that the associated homotopy exact sequence splits after localizing at a prime $p$ such that
\begin{equation}
  \label{Harris homotopy}
  \pi_*(G)_{(p)}\cong\pi_*(H)_{(p)}\oplus\pi_*(G/H)_{(p)}.
\end{equation}
The proof of Harris actually implies a stronger result such that there is a $p$-local homotopy equivalence
\begin{equation}
  \label{Harris}
  G\simeq_{(p)}H\times G/H
\end{equation}
which is one of the most classical mod $p$ decomposition of Lie groups. In particular, there is a projection $G_{(p)}\to H_{(p)}$ which has been treated only as a continuous map so far. However, we are interested in relations between the projection and the group structures (or the loop space structures) of $G$ and $H$, and so we naively ask how much this projection respects the group structures of $G$ and $H$. We make this naive question more precise. Stasheff \cite{St} defined $A_k$-maps for $k\ge 2$ between loop spaces as H-maps preserving the $(k-2)^\text{th}$ higher homotopy associativity. Then $A_k$-maps form a gradation between continuous maps and loop maps , and so our question is made precise as follows.

\begin{question}
  Given a prime $p$, for which $k$ is the projection $G_{(p)}\to H_{(p)}$ an $A_k$-map?
\end{question}

The aim of this paper is to answer this question. Since the case $(G,H)=(SU(2),Sp(1))$ is trivial, we will exclude it throughout.

\begin{theorem}
  \label{main}
  Let $(G,H),\,a_k$ and $p$ be as in the table below. Then for $k\ge 2$ the following statements hold:
  \begin{enumerate}
    \item for $(G,H)\ne(SO(2n),SO(2n-1))$ the projection $G_{(p)}\to H_{(p)}$ is an $A_k$-map if and only if $p\ge a_k$;
    \item for $(G,H)=(SO(2n),SO(2n-1))$
    \begin{enumerate}
      \item if $p\ge a_k$ then the projection  $G_{(p)}\to H_{(p)}$ is an $A_k$-map;
      \item if $p<a_k-n+2$ then the projection $G_{(p)}\to H_{(p)}$ is not an $A_k$-map.
    \end{enumerate}
  \end{enumerate}

  \renewcommand{\arraystretch}{1.2}

  \begin{figure}[H]
    \centering
    \begin{tabular}{c|ccc}
      \hline
      $(G,H)$&$(SU(2n+1),SO(2n+1))$&$(SU(2n),Sp(n))$&$(SO(2n),SO(2n-1))$\\
      $a_k$&$k(2n+1)$&$2kn-1$&$2(k-1)(n-1)+n$\\
      $p$&$p\ge 3$&$p\ge 3$&$p\ge 3$\\\hline
      $(G,H)$&$(E_6,F_4)$&$(Spin(8),G_2)$&\\
      $a_k$&$12k-5$&$6k-2$&\\
      $p$&$p\ge 5$&$p\ne 3$&\\\hline
    \end{tabular}
  \end{figure}
\end{theorem}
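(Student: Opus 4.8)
The plan is to recast the statement via Stasheff's projective spaces and then run an obstruction-theoretic argument, the ``if'' direction being governed by $p$-local homotopy groups of these projective spaces and the ``only if'' direction by the Steenrod operation $\mathcal P^1$ on their mod $p$ cohomology. By Stasheff's theorem, $r\colon G_{(p)}\to H_{(p)}$ is an $A_k$-map precisely when it extends to a map $P_k(G_{(p)})\to P_k(H_{(p)})$ of $k$-th projective spaces, compatibly with the filtrations $P_1\subset P_2\subset\cdots$ and with $\Sigma r=P_1(r)$. The inclusion $i\colon H\hookrightarrow G$ is a homomorphism, so it induces $P_j(i)$ for all $j$, and Harris's equivalence can be chosen so that $H\times\{\ast\}$ maps onto $H$, giving $ri=\mathrm{id}$; so the aim is to build, by induction on $j\le k$, a map $P_j(r)$ with $P_j(r)\circ P_j(i)=\mathrm{id}$. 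Given $P_{j-1}(r)$, the obstruction to extending over the cofibre sequence $\Sigma^{j-1}G^{\wedge j}\xrightarrow{\epsilon_j}P_{j-1}(G)\to P_j(G)$ is the homotopy class of $P_{j-1}(r)\circ\epsilon_j$; naturality of $\epsilon_j$ under $i$ together with $P_{j-1}(r)\circ P_{j-1}(i)=\mathrm{id}$ makes this class vanish on the ``$H$-part'' of the source, once one invokes the $p$-local James splitting $\Sigma G\simeq_{(p)}\Sigma H\vee\Sigma(G/H)\vee\Sigma(H\wedge G/H)$, so only summands involving $G/H$ remain and the obstruction lives in groups built from $\pi_\ast(P_j(H))_{(p)}$. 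Isolating which such summand carries the binding constraint is already somewhat delicate.

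For the ``if'' direction I would feed in the classical mod $p$ decompositions: in the stated ranges $H_{(p)}$ and $(G/H)_{(p)}$ are assembled from the relevant spheres (the Harris spheres for $G/H$), and $P_j(H)_{(p)}$ is the corresponding Stasheff approximation, whose homotopy in the relevant range is read off from sphere fibrations $S^{2m-1}\to S^{\ast}\to P_j(S^{2m-1})$. The obstruction classes are torsion, since Harris's decomposition is a rational loop-space equivalence compatible with the retraction, and the degrees in which they live are bounded in terms of the top cell of the relevant smash of Harris spheres; one then checks that $p\ge a_j$ pushes the first $p$-torsion of $\pi_\ast(S^{2m-1})_{(p)}$ — the element $\alpha_1$ in the $(2p-3)$-stem — above that range, so the obstructions vanish for all $j\le k$ and $P_k(r)$ is built. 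The constant $a_k$ is exactly the numerical threshold at which this inequality turns around.

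For the ``only if'' direction, an $A_k$-map would produce a map $H^\ast(P_k(H);\mathbb F_p)\to H^\ast(P_k(G);\mathbb F_p)$ of unstable algebras over the Steenrod algebra that splits $P_k(i)^\ast$ and is compatible with the Stasheff filtration. Using the polynomial descriptions of $H^\ast(BG;\mathbb F_p)$ and $H^\ast(BH;\mathbb F_p)$ (Chern, respectively Pontryagin, classes in the classical cases, and the standard generators in the exceptional cases) together with the Wu formula for $\mathcal P^1$, I would compute the $\mathcal P^1$-action on $H^\ast(P_k(G);\mathbb F_p)$ and $H^\ast(P_k(H);\mathbb F_p)$ in the truncation range and show that for $p<a_k$ no such splitting can exist: since $\mathcal P^1$ raises degree by $2(p-1)$, which is short when $p$ is small, it links a class in the ``$G/H$-summand'' $\mathrm{cofib}(P_k(i)^\ast)$ to a class that the filtration forces to be indecomposable in $H^\ast(P_k(G);\mathbb F_p)$, which is incompatible with splitting the identity. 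Determining precisely when this linkage first occurs is what produces the sharp constant $a_k$ rather than a weaker one, and I expect this to be the main obstacle.

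Finally, for $(G,H)=(SO(2n),SO(2n-1))$ the cohomology $H^\ast(BSO(2n);\mathbb F_p)=\mathbb F_p[p_1,\dots,p_{n-1},e]$ carries the extra Euler class $e$ in degree $2n$ (with $e^2=p_n$ and $\mathcal P^1e$ expressible through $e$ and the Pontryagin classes). This extra low-degree indecomposable gives a candidate filtration-compatible splitting enough room to dodge the $\mathcal P^1$-obstruction, so the cohomological argument runs only in codimension roughly $n$ and detects non-$A_k$ only for $p<a_k-n+2$, whereas the homotopy-theoretic ``if'' argument is untouched by $e$ and still yields $A_k$ for $p\ge a_k$ — hence the asymmetric statement (2).
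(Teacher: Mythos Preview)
Your two-pronged strategy---obstruction theory for sufficiency and $\mathcal{P}^1$ on $H^*(P^kG;\Z/p)$ for necessity---is the paper's strategy too, but the $\mathcal{P}^1$ argument alone does not establish the full ``only if'' direction, and this is a genuine gap. The clearest failure is $(Spin(8),G_2)$ at $p=2$: there is no odd-primary $\mathcal{P}^1$ here, and the paper instead carries out a delicate Samelson-product computation in $\pi_{14}(Spin(8))_{(2)}$, showing that $\langle[\iota_7],[\iota_7]\rangle$ has nonzero image in $\pi_{14}(G_2)$ while $[\iota_7]$ itself dies---hence the projection is not even an H-map. At odd primes the paper's $\mathcal{P}^1$ criterion produces, for each interval $a_{k-1}\le p<a_k$, a length-$k$ monomial in some $\mathcal{P}^1 x$ that obstructs an $A_k$-splitting; but for $p<a_1$ no monomial of length $\ge 2$ is available in the right degree, and the paper again resorts to Samelson products (Bott's for $SU$, Mahowald's for $SO(2n)$, the authors' own for $E_6$ at $p=5$). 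There are also sporadic holes inside the intervals: for $(E_6,F_4)$ at $p=7$ the relevant Wu-formula coefficient is $35\equiv 0\pmod 7$, and for $(SO(2n),SO(2n-1))$ at $p=n$ the computation degenerates; both are patched by Samelson products. This second ingredient is essential, not a technicality, and your proposal omits it entirely.

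On the ``if'' side your direct obstruction approach on the Stasheff cofibrations is a reasonable alternative to the paper's route (which replaces $P^kG$ by the reduced complex $\widehat{P}^k(H,S^{2l_1-1},\ldots,S^{2l_t-1})$ built from projective spaces of odd spheres, having only even cells, and extends the adjoint map to $BG$ cell by cell). But be aware that the clean dimension count yields only $p\ge b_k=\max\{(k-1)m+l,\,kl\}$, where $2m-1$ and $2l-1$ are the top cohomology degrees of $H$ and $G/H$. For $(E_6,F_4)$ one has $b_k=a_k+2$, and the paper closes the remaining case $p=12k-5$ by an ad hoc check that the top obstruction group $\pi_{24k-7}(BE_6)_{(p)}$ vanishes. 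Your assertion that $a_k$ is ``exactly the numerical threshold at which this inequality turns around'' elides this extra step.
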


Harris \cite{Ha2} showed the decomposition \eqref{Harris homotopy} by constructing a specific map $G/H\to G$, which is a $p$-local section of the projection $G\to G/H$, from a finite order self-map of $G$. But if we use the mod $p$ decomposition of Mimura, Nishida, and Toda \cite{MNT} and its naturality instead, then we get the decomposition \eqref{Harris} for more pairs of Lie groups, e.g. $(SU(n),SU(n-1))$ for $p\ge n$. Our method for showing the projection $G_{(p)}\to H_{(p)}$ is an $A_k$-map does not use a specific map $G/H\to G$, and so we get the following general result. We set notation to state it. Suppose that a connected Lie group $G$ has no $p$-torsion in the integral homology for an odd prime $p$. Then its mod $p$ cohomology is an exterior algebra generated by odd degree elements. Suppose further that a pair of connected Lie groups $(G,H)$ admits the decomposition \eqref{Harris}. Then the mod $p$ cohomology of $H$ and $G/H$ are also exterior algebras generated by odd degree elements. Let $2m-1$ and $2l-1$ be the largest dimensions of the mod $p$ cohomology generators of $H$ and $G/H$, respectively. We define
$$b_k=\max\{(k-1)m+l,kl\}.$$
Notice that if $(G,H)$ is as in Theorem \ref{main} then it satisfies the above conditions. The following table gives a list of $(m,l)$ for $(G,H)$ in Theorem \ref{main}.

\renewcommand{\arraystretch}{1.2}

\begin{figure}[H]
  \centering
  \begin{tabular}{c|ccc}
    \hline
    $(G,H)$&$(SU(2n+1),SO(2n+1))$&$(SU(2n),Sp(n))$&$(SO(2n),SO(2n-1))$\\
    $(m,l)$&$(2n+1,2n+1)$&$(2n,2n-1)$&$(2n-2,n)$\\\hline
    $(G,H)$&$(E_6,F_4)$&$(Spin(8),G_2)$&\\
    $(m,l)$&$(12,9)$&$(6,4)$&\\\hline
  \end{tabular}
\end{figure}

\noindent It holds that $a_k=b_k$ for $(G,H)$ in Theorem \ref{main} except for $(G,H)=(E_6,F_4)$, in which case $a_k=b_k-2$.

\begin{theorem}
  \label{main general}
  Let $(G,H)$ be a pair of connected Lie groups, and let $p$ be an odd prime. Suppose $G$ has no $p$-torsion in the integral homology and the decomposition \eqref{Harris} holds. Then for $p\ge b_k$ the projection $G_{(p)}\to H_{(p)}$ is an $A_k$-map.
\end{theorem}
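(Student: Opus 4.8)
The plan is to realize the $A_k$-structure on $r$ as an extension problem for projective spaces and to annihilate the obstructions by a torsion-and-dimension count. Write $X=G_{(p)}$, $Y=H_{(p)}$ and $Z=(G/H)_{(p)}$, so that $X\simeq Y\times Z$ by \eqref{Harris} and, as a map, $r$ is the composite $X\xrightarrow{\simeq}Y\times Z\xrightarrow{\mathrm{pr}_1}Y$; note $ri\simeq\mathrm{id}$, where $i\colon H\hookrightarrow G$.

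First I would invoke Stasheff's description: a map of loop spaces is an $A_k$-map exactly when its suspension extends, compatibly with the canonical filtrations, to a map $\bar r_k\colon P_k(X)\to P_k(Y)$ of $k$-th projective spaces, where $P_n(W)/P_{n-1}(W)\simeq\Sigma^n W^{\wedge n}$. Since $i$ is a homomorphism, hence a loop map, it induces $\bar i_n\colon P_n(Y)\to P_n(X)$, and building the tower $\{\bar r_n\}$ amounts to producing compatible homotopy left inverses of the $\bar i_n$. Because $G$ has no $p$-torsion, $H^*(G;\mathbb{Z}/p)$, $H^*(H;\mathbb{Z}/p)$ and $H^*(G/H;\mathbb{Z}/p)$ are exterior on odd generators and $i_*$ is split injective in mod $p$ homology; hence $\bar i_n$ is, up to homotopy, a cofibration, and passing to its cofiber $C_n$ reduces the problem to extending $\mathrm{id}_{P_n(Y)}$ over $C_n$ cell by cell, with obstructions in the groups $\pi_*(P_n(Y))$. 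The cells of $C_n$ are ``$G/H$-directed'': they come from the summands of the filtration quotients that involve $\widetilde H_*(G/H;\mathbb{Z}/p)$.

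The key input is a low-degree description of the target. If the mod $p$ cohomology generators of $H$ sit in degrees $2d_1-1\le\cdots\le 2m-1$, then by the mod $p$ decomposition of $H$ into sphere-like pieces, together with $p$-regularity below degree about $2p$, the space $Y$---and with it every $P_n(Y)$---is equivalent, through dimensions up to roughly $2d_1+2p-4$, to a product $\prod_j K(\mathbb{Z}_{(p)},2d_j-1)$ of Eilenberg--MacLane spaces (respectively to the associated truncated-polynomial complex). In that range $[W,P_n(Y)]$ is detected by $\mathbb{Z}_{(p)}$-cohomology, so a map $W\to P_n(Y)$ is null as soon as it vanishes on $H^*(-;\mathbb{Z}_{(p)})$. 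Carrying out the cell-by-cell extension, the obstruction cochain is supported on the $G/H$-directed cells, and its cohomological shadow is forced to vanish because $\bar r_{n-1}$ must induce the standard inclusion $H^*(P_n(H);\mathbb{Z}/p)\hookrightarrow H^*(P_n(G);\mathbb{Z}/p)$, whose image contains no $G/H$-directed class. Hence every obstruction occurring in dimension $\lesssim 2p$ vanishes, and $b_k=\max\{(k-1)m+l,\,kl\}$ is precisely the relevant threshold: the first term records a $(k-1)$-fold product of the top generator of $H$ with the top generator of $G/H$, the second a $k$-fold product of the top generator of $G/H$, and these are the worst cases that have to be cleared in constructing $\bar r_k$. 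Thus $p\ge b_k$ makes $\bar r_k$ available, so $r$ is an $A_k$-map.

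I expect the real difficulty to lie in the previous step: pinning down which obstructions actually occur and bounding their degrees by $2b_k-1$. The subtlety is that the filtration quotients $P_n(W)/P_{n-1}(W)$, and hence the cofibers $C_n$, a priori carry cells in dimensions governed by $\dim(G/H)$ rather than by generator degrees, so one must show that the ``high'' obstructions are decomposable, or are detected only by Steenrod operations that vanish identically in the relevant range, and so contribute nothing---only then do the generator degrees, and therefore $b_k$, control everything. By contrast the remaining ingredients---the reduction to projective spaces, the fact that rationally $G$ is the product of the odd spheres whose dimensions are the union of those for $H$ and $G/H$ (so that $r$ is rationally an $A_\infty$-map and the rational obstructions vanish automatically), and the product-of-Eilenberg--MacLane-spaces picture in low degrees---I expect to be comparatively routine.
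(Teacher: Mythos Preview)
Your overall strategy---reduce to an extension problem on projective spaces and kill obstructions by a dimension count against $p$---is the right one, and you have correctly located the gap: you have not shown that the relevant obstruction cells live in dimensions $\le 2b_k-1$. As you note, the filtration quotients $P_n(G)/P_{n-1}(G)\simeq\Sigma^n G^{\wedge n}$ carry cells up to dimension $n+n\dim G$, which is far larger than $2b_k$; your hope that the ``high'' obstructions are decomposable or killed by vanishing Steenrod operations is not substantiated, and the Eilenberg--MacLane approximation of $P_n(H)$ only holds through roughly degree $2d_1+2p-4$, nowhere near the top of $P_k(G)$. So as written the argument does not close.

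The paper fills exactly this gap with a device you are missing: the \emph{reduced projective space}
\[
\widehat P^k(X_1,\ldots,X_l)=\bigcup_{i_1+\cdots+i_l=k}P^{i_1}X_1\times\cdots\times P^{i_l}X_l,
\]
shown to be a functorial retract of $P^k(X_1\times\cdots\times X_l)$ compatible with the suspensions. For $p\ge b_k$ the group $G$ is $p$-regular, so $G/H\simeq\prod_j S^{2l_j-1}$, and the paper works with $\widehat P^k(H,S^{2l_1-1},\ldots,S^{2l_t-1})$ rather than $P^k(G)$. Because $P^iS^{2n-1}$ has only even cells of dimension $\le 2in$, the part of $\widehat P^k$ in which a $G/H$-factor actually appears has only even cells and dimension at most $2b_k$; this is precisely the bound you wanted but could not prove. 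The obstructions then lie in $\pi_{\mathrm{odd}}(BG)$ in degrees $\le 2b_k-1$, and these vanish since $G\simeq\prod S^{2n_i-1}$ gives $\pi_{2i-1}(BG)=0$ for $i\le p$. This yields an $A_k$-equivalence $G\simeq_{(p)}H\times\prod_j S^{2l_j-1}$; the projection $G\to H$ is then the first factor projection of a product of $A_k$-spaces, hence an $A_k$-map via the obvious map $\widehat P^k(H,\ldots)\to P^kH\to BH$. Note also that the paper's target is $BG$ (resp.\ $BH$), not $P_k(H)$: the relevant vanishing is $\pi_{\mathrm{odd}}(BG)=0$ in low degrees, not an Eilenberg--MacLane description of $P_k(H)$.
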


Theorem \ref{main general} is a consequence of the following stronger statement (Proposition \ref{regular A_k refined}).

\begin{theorem}
  \label{main equiv}
  Under the condition of Theorem \ref{main general}, there is an $A_k$-structure on $(G/H)_{(p)}$ such that the decomposition
  $$G\simeq_{(p)} H\times G/H$$
  is an $A_k$-equivalence.
\end{theorem}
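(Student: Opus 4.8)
The plan is to run the obstruction theory of Stasheff's projective spaces \cite{St}. Recall that an $A_j$-structure on a connected CW space $X$ is the same datum as a filtered space $P_j(X)=P_1(X)\cup\cdots\cup P_j(X)$ with $P_1(X)=\Sigma X$ and, for $2\le i\le j$, a cofiber sequence $\Sigma^{i-1}X^{\wedge i}\xrightarrow{\,q_i\,}P_{i-1}(X)\to P_i(X)$, so that $P_i(X)$ is the mapping cone of the structure map $q_i$; likewise an $A_j$-map between $A_j$-spaces is the same datum as a filtered map of their projective $j$-spaces extending the prescribed suspended map on $P_1$. The inclusion $H\hookrightarrow G$ is a homomorphism, hence an $A_\infty$-map, so after $p$-localization it gives filtered maps $P_j(H_{(p)})\to P_j(G_{(p)})$ for all $j$. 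Write $Y=(G/H)_{(p)}$. I will construct, by induction on $j=1,\dots,k$, an $A_j$-structure on $Y$, the resulting product $A_j$-structure on $H_{(p)}\times Y$, and a filtered homotopy equivalence $\Phi_j\colon P_j(H_{(p)}\times Y)\xrightarrow{\ \simeq\ }P_j(G_{(p)})$ refining the homotopy equivalence $H\times G/H\simeq_{(p)}G$. At $j=k$ this is exactly the assertion of the theorem, because a homotopy inverse of an $A_k$-equivalence is again an $A_k$-map. The base case $j=1$ is trivial: $\Phi_1$ is the suspension of the given equivalence.

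Fix $p\ge b_k$ (and $k\ge 2$, the case $k=1$ being vacuous). Since $b_k\ge b_2\ge m+l$, the prime $p$ is large enough to force $p$-regularity of $G$, $H$ and $G/H$; thus $G_{(p)}$, $H_{(p)}$ and $Y$ are $p$-local products of odd spheres, those of $H_{(p)}$ and $Y$ having top cells in dimensions $2m-1$ and $2l-1$, and the homotopy of a product of odd spheres has no $p$-torsion until, roughly, dimension $2(p-1)$ above each of its cells. The inductive step $j-1\to j$ consists of two pieces of obstruction theory. First, extending the $A_{j-1}$-structure on $Y$ to an $A_j$-structure amounts to choosing $q_j\colon\Sigma^{j-1}Y^{\wedge j}\to P_{j-1}(Y)$ with restriction prescribed by the lower structure maps; the obstruction lies in a group assembled from $[\Sigma^{j-1}Y^{\wedge j},P_{j-1}(Y)]$ and its lower analogues, and a cell count bounds the relevant dimensions by roughly $2jl$. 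Second, granting $q_j$, extending $\Phi_{j-1}$ by one filtration step meets an obstruction assembled from $[\Sigma^j(H_{(p)}\times Y)^{\wedge j},P_j(G_{(p)})]$ relative to the already constructed part; splitting the smash power according to the number of $Y$-factors, the summand with no $Y$-factor is pinned down by the $A_\infty$-structure of $H_{(p)}$ together with $P_j(H_{(p)})\to P_j(G_{(p)})$ and so contributes nothing, while among the others the one with a single $Y$-factor is top-dimensional, of dimension roughly $2((j-1)m+l)$, and the rest are dominated by it. As $j\le k$, these bounds are at most $2b_k$, so $p\ge b_k$ makes all the obstruction groups vanish and the step goes through.

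The main obstacle is carrying out this cell bookkeeping honestly, so that the threshold comes out to be exactly $p\ge b_k$: one must track the precise suspension degrees and the exact location of the first $p$-torsion in each sphere involved, and in particular separate the genuine $p$-torsion obstruction from the free part of the homotopy of the target. The complexes $\Sigma^{j-1}Y^{\wedge j}$ and $\Sigma^j(H_{(p)}\times Y)^{\wedge j}$ carry cells meeting that free part, which a priori could obstruct; one removes this difficulty by passing first to rational coefficients, where $G\simeq_\Q H\times G/H$ is already a loop equivalence (both sides being products of rational Eilenberg--MacLane loop spaces with matching indices), so that the free part of every obstruction vanishes on the nose and only the $p$-torsion survives, which the dimension estimate then kills. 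A further subtlety is coherence: $q_j$ and $\Phi_j$ must be chosen simultaneously and compatibly with the product structure, so the true obstruction lives in a total complex assembled from the pieces above, but the same bound $p\ge b_k$ controls it; a similar estimate shows the resulting $A_k$-structure on $(G/H)_{(p)}$ to be unique up to $A_k$-equivalence, although this is not needed for the bare statement. This completes the induction and proves the theorem; carried out verbatim for an arbitrary $p$-torsion-free $G$ admitting \eqref{Harris}, the argument is Proposition~\ref{regular A_k refined}, from which Theorems~\ref{main general} and \ref{main equiv} both follow.
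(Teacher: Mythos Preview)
Your approach differs substantively from the paper's and, as written, has a real gap. The paper's Proposition~\ref{regular A_k refined} does not run inductive obstruction theory into $P_j(G_{(p)})$. Instead it (i) takes the ready-made $A_{p-1}$-structure on $(G/H)_{(p)}\simeq\prod_i S^{2l_i-1}$ (each odd sphere is $A_{p-1}$ at $p$, so no obstruction theory is needed for your step~(a)); (ii) uses the Iwase--Mimura criterion (Lemma~\ref{A_n}) and the reduction of Proposition~\ref{A_n refined} to replace the target by $BG$ and the source by the ``fat wedge'' $\widehat{P}^k(H,S^{2l_1-1},\dots,S^{2l_t-1})$; and (iii) observes via Lemma~\ref{P^lS} that the relevant subcomplex $X$ has only \emph{even}-dimensional cells of dimension $\le 2b_k$, while $\pi_{2i-1}(BG)=0$ for $i\le p$ by $p$-regularity. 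The extension is then forced by parity alone, with no free-part issue arising; a separate step (using Proposition~\ref{regular A_k} and \cite{Ts}) pins the restriction to $P^{k-1}H$ to the canonical map so one can glue in $P^kH\to BG$.

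Your rationalization manoeuvre is where the gap lies. You assert that the free part of each obstruction vanishes because the rational decomposition is a loop equivalence, but the obstruction at stage $j$ is the class of the \emph{specific} map $\Phi_{j-1}\circ q_j$, and its rationalization is governed by $(\Phi_{j-1})_\Q$, not by an abstract rational $A_\infty$-equivalence. To make this work you would have to carry, as an auxiliary inductive hypothesis, a filtered homotopy between $(\Phi_{j-1})_\Q$ and the canonical rational equivalence, and check it can be propagated; you do not do this, and the phrases ``roughly $2jl$'', ``assembled from'', and ``the same bound controls it'' do not substitute for it. Moreover, mapping into $P_j(G_{(p)})$ rather than $BG$ forces you to compute homotopy of a truncated projective space, which is needlessly hard. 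If you redirect the target to $BG$ (Lemma~\ref{A_n}) and track cell parities, you will find the free part never appears and the bound $p\ge b_k$ drops out exactly---that is precisely the paper's argument, which is not ``your argument carried out verbatim''.
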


The main technique for proving Theorem \ref{main general} is a refinement of the reduction of the projective spaces of $p$-regular Lie groups established in the paper \cite{HKT} on the higher homotopy commutativity of localized Lie groups. Then, indirectly though, our result is connected to higher homotopy commutativity, Sugawara and Williams $C_k$-spaces, where we refer to \cite{HKT} for their definitions. For example, we have the following.

\begin{corollary}
  \label{C_n}
  Let $p$ be an odd prime. The following are equivalent:
  \begin{enumerate}
    \item the projection $SU(2n+1)_{(p)}\to SO(2n+1)_{(p)}$ is an $A_k$-map;
    \item $SU(2n+1)_{(p)}$ is a Sugawara $C_k$-space;
    \item $SU(2n+1)_{(p)}$ is a Williams $C_k$-space.
  \end{enumerate}
\end{corollary}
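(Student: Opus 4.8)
The plan is to show that all three conditions are equivalent to the single inequality $p\ge k(2n+1)$. Condition (1) is immediate from Theorem \ref{main}: since $(SU(2n+1),SO(2n+1))$ is not the exceptional pair $(SO(2n),SO(2n-1))$, part (1) of that theorem says the projection is an $A_k$-map if and only if $p\ge a_k$, and the table records $a_k=k(2n+1)$. So the remaining task is to identify conditions (2) and (3) with the same numerical criterion.

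For (2) and (3) I would invoke the classification of $p$-regular Lie groups as Sugawara and as Williams $C_k$-spaces from \cite{HKT}. First I would dispose of the non-$p$-regular range: if $p<2n+1$ then $SU(2n+1)_{(p)}$ is not even homotopy commutative, hence not a $C_k$-space of either type for any $k\ge 2$, while also $p<2n+1<k(2n+1)$, so the asserted equivalences hold trivially there. In the $p$-regular range $p\ge 2n+1$ the results of \cite{HKT} apply, and they express the two $C_k$-space conditions through lower bounds on $p$ read off from the degrees $3,5,\dots,4n+1$ of the mod $p$ cohomology generators of $SU(2n+1)$; the relevant maximum is attained at the top degree $4n+1=2(2n+1)-1$, so both the Sugawara bound and the Williams bound collapse to $p\ge k(2n+1)$. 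This finishes the chain $(1)\iff p\ge k(2n+1)\iff(2)\iff(3)$.

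More structurally, the reason the three conditions line up is common to all of them: the refinement of the reduction of $p$-regular projective spaces that proves Theorem \ref{main general} produces exactly the reduction that \cite{HKT} uses to detect the $C_k$-structures on $SU(2n+1)_{(p)}$, so one can also run the equivalences directly, without computing $a_k$ or the \cite{HKT} bounds. I expect the delicate step to be the coincidence of the Sugawara and the Williams bounds: these two notions of $C_k$-space differ in general, and matching them here relies on the sparseness of the cohomology of $SU(2n+1)/SO(2n+1)$, whose generators sit only in degrees $\equiv 1\pmod 4$, or equivalently on the fact that the relevant $p$-regular projective space of $SU(2n+1)$ is built from alternate cells only, which is just enough room to make both reductions exist under the same hypothesis on $p$. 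Verifying this carefully in the band of primes just below $k(2n+1)$, where $p$-regularity still holds, is where the refinement of \cite{HKT} does the actual work.
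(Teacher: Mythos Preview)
Your overall strategy—reducing all three conditions to the single inequality $p\ge k(2n+1)$—is correct and is exactly what the paper does. The execution, however, is more circuitous than necessary, and one attribution is off.

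The paper's proof is three lines: the equivalence $(2)\Leftrightarrow(3)$ is quoted directly as a theorem from \cite{HKT}; the numerical characterization of (3) (that $SU(2n+1)_{(p)}$ is a Williams $C_k$-space iff $p>k(2n+1)$) is due to Saumell \cite{Sa}, not \cite{HKT}; and (1) is handled by Theorem \ref{main}. In particular, the equivalence $(2)\Leftrightarrow(3)$ is a general result in \cite{HKT} comparing the two $C_k$-notions for localized Lie groups, not something obtained by computing two separate numerical bounds and then checking they coincide. So the ``delicate step'' you anticipate in your last paragraph simply does not arise, and the heuristic about the sparse cohomology of $SU(2n+1)/SO(2n+1)$, while interesting, is not what is doing the work. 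Your detour through the non-$p$-regular range is also unnecessary once you cite Saumell's result, which already covers all odd primes.
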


It would be interesting to find a direct connection between an $A_k$-structure of the projection $G_{(p)}\to H_{(p)}$ and the higher homotopy commutativity of $G_{(p)}$ and $H_{(p)}$.

\textit{Acknowledgement:} We are grateful to Mitsunobu Tsutaya for useful advices on $A_n$-structures and to the referee for pointing out an ambiguity in the calculation of a Samelson product in $Spin(8)$ in the earlier version and for the suggestion to add Theorem \ref{main equiv}. The first author was supported in part by JSPS KAKENHI (No. 17K05248).


\section{Projective spaces for products}

Throughout this section, we assume that spaces are path-connected. This section recalls  the reduction of the projective space of a product of $A_n$-spaces established in \cite{HKT} and shows its property that we will use. We refer to \cite{St,IM,HKT} for the basics of $A_n$-spaces, their projective spaces, and $A_n$-maps. Let $X$ be an $A_n$-space and $P^kX$ be the $k^\text{th}$ projective space of $X$ for $k\le n$. If $X$ is an $A_\infty$-space then we write the canonical map $P^kX\to BX$ by $j_k$. For a map $f\colon X\to Y$ where $Y$ is a topological monoid, let $\bar{f}\colon\Sigma X\to BY$ denote the adjoint of $f$. The following is shown in \cite{IM} (cf. \cite{HKT}).

\begin{lemma}
  \label{A_n}
  Let $X$ be an $A_n$-space and $Y$ be a topological monoid. A map $f\colon X\to Y$ is an $A_n$-map if and only if there is a map $P^nX\to BY$ satisfying a homotopy commutative diagram
  $$\xymatrix{\Sigma X\ar[r]^{\bar{f}}\ar[d]&BY\ar@{=}[d]\\
  P^nX\ar[r]&BY.}$$
\end{lemma}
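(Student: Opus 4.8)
The plan is to compare the associahedral filtration of $P^nX$ with the bar filtration of $BY$. Recall that the $A_n$-form on $X$ presents $P^nX$ as a filtered space $P^1X=\Sigma X\subset P^2X\subset\cdots\subset P^nX$ whose $k$-th filtration quotient is homotopy equivalent to $\Sigma^kX^{\wedge k}$, while the bar construction presents $BY$ as the union of the projective spaces $P^kY$ of the monoid $Y$, with $P^1Y=\Sigma Y$, with canonical inclusions $j_k\colon P^kY\to BY$, and with $k$-th filtration quotient $\Sigma^kY^{\wedge k}$. The essential input from \cite{St,IM} is that, for $A_n$-spaces $X$ and $Y$, the map $f$ is an $A_n$-map exactly when it extends to a filtration-preserving map $P^\bullet f\colon P^kX\to P^kY$ ($k\le n$) with $P^1f=\Sigma f$; since a topological monoid is in particular an $A_\infty$-space, the task is to compare such filtered maps with arbitrary maps $P^nX\to BY$ restricting correctly to $P^1X$.

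For the ``only if'' direction, suppose $f$ is an $A_n$-map and take the associated filtered map $P^\bullet f$. Then $j_n\circ P^nf\colon P^nX\to P^nY\to BY$ restricts on $P^1X=\Sigma X$ to $j_1\circ\Sigma f$; since $j_1$ is the adjoint of the canonical map $Y\to\Omega BY$, this composite equals $\bar f$, so $j_n\circ P^nf$ completes the diagram.

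For the ``if'' direction, let $g\colon P^nX\to BY$ be a map with $g|_{\Sigma X}\simeq\bar f$. First, using the homotopy extension property for $\Sigma X=P^1X\hookrightarrow P^nX$, replace $g$ by a homotopic map whose restriction to $\Sigma X$ is $j_1\circ\Sigma f$ on the nose. Next, deform $g$ into the bar filtration inductively on $k=1,\dots,n$: given that $g$ already carries $P^{k-1}X$ into $P^{k-1}Y\subseteq P^kY$, one wants to homotope $g$ rel $P^{k-1}X$ so that $P^kX$ is carried into $P^kY$. The obstructions to this compression lie in the cohomology of the $k$-th filtration quotient $\Sigma^kX^{\wedge k}$ with coefficients in the homotopy groups of the homotopy fiber of $j_k\colon P^kY\to BY$; this fiber is highly connected because the bar filtration quotients $\Sigma^{k+1}Y^{\wedge(k+1)}$ are, and the coherent vanishing of these obstructions across all stages is precisely what is proved in \cite{IM} (and reorganized in \cite{HKT}). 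The resulting filtration-preserving map $P^\bullet X\to P^\bullet Y$ then exhibits the $A_n$-map structure on $f$.

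The step I expect to be the main obstacle is this last compression. Crude obstruction theory is not enough: $P^nX$ typically carries cells in dimensions well beyond the connectivity of the pair $(BY,P^nY)$, so a priori the compression can be blocked. What rescues it is that $g$ is defined on all of $P^nX$ simultaneously, so that a potential obstruction at stage $k$ is already trivialized by the next filtration stage $P^{k+1}X$; isolating and exploiting this coherence is the technical heart, and it is exactly the content one imports from the structure theory of $A_n$-spaces and $A_n$-maps developed in \cite{St,IM}.
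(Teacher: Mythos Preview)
The paper does not give its own proof of this lemma; it simply records it as ``shown in \cite{IM} (cf.\ \cite{HKT})''. So the comparison is really between your sketch and the argument one finds in \cite{St,IM}.

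Your ``only if'' direction is fine and is the standard argument: an $A_n$-map induces filtered maps $P^kX\to P^kY$, and composing with $j_n$ gives the required $P^nX\to BY$.

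The ``if'' direction, however, has a genuine gap. Your plan is to compress an arbitrary $g\colon P^nX\to BY$ into the bar filtration step by step, and you correctly note that naive obstruction theory is insufficient because the relevant pairs are not connected enough. You then assert that the ``coherent vanishing of these obstructions across all stages is precisely what is proved in \cite{IM}''. That is not what \cite{IM} does, and invoking \cite{IM} here is circular: the lemma \emph{is} the cited result. The compression route, as stated, remains unjustified.

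The actual argument (implicit in \cite{St} and made explicit for $A_n$-maps in \cite{IM}) avoids compression entirely. The key structural fact is that the adjoint $\iota\colon X\to\Omega P^nX$ of the inclusion $\Sigma X\hookrightarrow P^nX$ is an $A_n$-map for the loop $A_\infty$-structure on $\Omega P^nX$; this is the universal property of the projective space. Given $g\colon P^nX\to BY$ with $g|_{\Sigma X}\simeq\bar f$, loop it to obtain $\Omega g\colon\Omega P^nX\to\Omega BY\simeq Y$, a loop map and hence an $A_\infty$-map. The composite $\Omega g\circ\iota\colon X\to Y$ is then an $A_n$-map whose adjoint is $g|_{\Sigma X}\simeq\bar f$, so it is homotopic to $f$. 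Thus $f$ admits an $A_n$-structure. No filtration compression is needed; the work is front-loaded into the single nontrivial input that $\iota$ is an $A_n$-map.
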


Let $X_1,\ldots,X_l$ be $A_n$-spaces and let
$$\widehat{P}^n(X_1,\ldots,X_l)=\bigcup_{i_1+\cdots+i_l=n}P^{i_1}X_1\times\cdots\times P^{i_l}X_l.$$
We regard $X_1\times\cdots\times X_l$ as an $A_n$-space by the product of multiplications of $X_1,\ldots,X_l$. The following is proved in \cite{HKT}.

\begin{lemma}
  \label{retraction proj}
  Let $X_i$ be $A_n$-spaces for $i=1,2$. There are maps $\widehat{P}^i(X_1,X_2)\to P^i(X_1\times X_2)$ and $P^i(X_1\times X_2)\to\widehat{P}^i(X_1,X_2)$ for $i=1,2,\ldots,n$ satisfying a homotopy commutative diagram
  $$\xymatrix{\Sigma X_1\vee\Sigma X_2\ar[r]\ar[d]&\widehat{P}^2(X_1,X_2)\ar[r]\ar[d]&\cdots\ar[r]&\widehat{P}^n(X_1,X_2)\ar[d]\\
  \Sigma(X_1\times X_2)\ar[r]\ar[d]_{\Sigma p_1+\Sigma p_2}&P^2(X_1\times X_2)\ar[d]\ar[r]&\cdots\ar[r]&P^n(X_1\times X_2)\ar[d]\\
  \Sigma X_1\vee\Sigma X_2\ar[r]&\widehat{P}^2(X_1,X_2)\ar[r]&\cdots\ar[r]&\widehat{P}^n(X_1,X_2)}$$
  where the upper left arrow is the inclusion and $p_i\colon X_1\times X_2\to X_i$ is the $i^\text{th}$ projection for $i=1,2$.
\end{lemma}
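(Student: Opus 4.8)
\emph{The plan} is to pass to an explicit cell model for the projective spaces, read off the inclusion maps essentially for free, and then build the retraction maps by induction along the filtration, concentrating the whole argument into a single obstruction‑vanishing statement.

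\emph{Set-up and the inclusion maps.} I would use the bar‑construction model: after the usual rectification (harmless in the range $\le n$ that is relevant here) take the $X_i$ to be topological monoids $M_i$, so that $P^kX_i$ is the $k$‑th stage $B_kM_i$ of the bar filtration of $BM_i$ and $P^k(X_1\times X_2)$ is $B_k(M_1\times M_2)$ for the product multiplication. The pivotal identification is $B(M_1\times M_2)=BM_1\times BM_2$ as spaces, coming from $N(M_1\times M_2)_\bullet=NM_{1\bullet}\times NM_{2\bullet}$ and the commutation of realization with products. Under it, a routine analysis of the Eilenberg--Zilber (shuffle) subdivision of $\Delta^i\times\Delta^j$ shows that $\widehat P^k(X_1,X_2)=\bigcup_{i+j\le k}B_iM_1\times B_jM_2$ is a subcomplex of $B_k(M_1\times M_2)=P^k(X_1\times X_2)$; these inclusions serve as the top‑to‑middle vertical maps of the ladder. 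They are visibly compatible with the filtration maps and, for $k=1$, reduce to the standard inclusion $\Sigma X_1\vee\Sigma X_2\hookrightarrow\Sigma(X_1\times X_2)$, as the diagram demands.

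\emph{The retractions.} The work is in the middle‑to‑bottom maps $r_k\colon P^k(X_1\times X_2)\to\widehat P^k(X_1,X_2)$, which I would build by induction on $k$ while preserving three conditions: (a) $r_k$ is a homotopy left inverse to the inclusion $\widehat P^k\hookrightarrow P^k(X_1\times X_2)$; (b) $r_k$ restricted to $P^{k-1}(X_1\times X_2)$ agrees up to homotopy with $r_{k-1}$ followed by $\widehat P^{k-1}\hookrightarrow\widehat P^k$; and (c) composing $r_k$ with the evident retraction $q_i\colon\widehat P^k\to P^kX_i$ of the staircase onto its $i$‑th edge recovers $P^kp_i$, $i=1,2$. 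The base case is $k=1$ with $r_1=\Sigma p_1+\Sigma p_2\colon\Sigma(X_1\times X_2)\to\Sigma X_1\vee\Sigma X_2$, which satisfies (a)--(c) strictly and is exactly the labelled left‑hand vertical. For the inductive step, write $P^k(X_1\times X_2)=P^{k-1}(X_1\times X_2)\cup_{\phi_k}C(S^{k-1}\wedge(X_1\times X_2)^{\wedge k})$, where $\phi_k$ is the attaching map of the $k$‑th bar cell, and try to extend $r_{k-1}$ (pushed into $\widehat P^k$ by (b)) over the cone; this works precisely when the obstruction, the homotopy class of $r_{k-1}\circ\phi_k\colon S^{k-1}\wedge(X_1\times X_2)^{\wedge k}\to\widehat P^k$, vanishes, and one then checks (a)--(c) can be maintained.

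\emph{The obstacle.} Killing that obstruction \emph{inside} $\widehat P^k$ is the crux. By naturality of the bar construction in the monoid maps $p_i$ and condition (c), applying $q_i$ to the obstruction gives $P^kp_i\circ\phi_k$, which is the attaching map $\phi_k^{(X_i)}$ of the top cell of $P^kX_i$ precomposed with a projection, hence nullhomotopic inside $P^kX_i\subseteq\widehat P^k$ by the one‑variable theory. But this only kills the two edge projections of the obstruction, and $(q_1,q_2)\colon\widehat P^k\to P^kX_1\times P^kX_2$ is merely the inclusion of a subcomplex, so a nullhomotopy in $\widehat P^k$ does not follow formally --- a generic space with the same cofiber as $\widehat P^k\hookrightarrow P^k(X_1\times X_2)$ admits no such retraction. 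To bridge this I would exploit the special form of $\phi_k$: decompose a suspension of $(X_1\times X_2)^{\wedge k}$, via the splitting $\Sigma(X_1\times X_2)\simeq\Sigma X_1\vee\Sigma X_2\vee\Sigma(X_1\wedge X_2)$ and its iterates, into pieces $\Sigma^\bullet(X_1^{\wedge a}\wedge X_2^{\wedge b})$ with $k\le a+b\le 2k$, and --- again by naturality, now with respect to the submonoid inclusions $X_i\hookrightarrow X_1\times X_2$ and the observation that an iterated product in $M_1\times M_2$ is the pair of iterated products --- show that $\phi_k$ carries each such piece into a part of the filtration where, after $r_{k-1}$, it is nullhomotopic in $\widehat P^k$: for $a+b\le k$ because it is then governed by the one‑variable theory inside $P^aX_1\times P^bX_2$, and for $a+b>k$ because the cells that would receive it are absent below the required filtration and the attaching data there is degenerate. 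Reassembling these summand‑wise nullhomotopies, arranged to respect the wedge decomposition (hence automatically compatible with $(q_1,q_2)$) and with conditions (a)--(c), yields the needed nullhomotopy, and so $r_k$. I expect precisely this summand‑by‑summand analysis of $\phi_k$, together with the simultaneous bookkeeping of (a)--(c), to be the genuinely hard part; everything else is formal or a transcription of one‑variable facts. The case $l>2$ then follows by iterating the two‑variable statement via naturality.
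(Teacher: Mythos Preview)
The paper does not prove this lemma; it is quoted from \cite{HKT}. So there is no in-paper argument to compare against, and your proposal has to stand on its own.

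The overall shape of your strategy---identify $\widehat P^k$ with a subcomplex of $P^k(X_1\times X_2)$ and then build a filtration-compatible retraction by induction---is reasonable, but there is a genuine gap at the very first move. You rectify the $A_n$-spaces $X_i$ to topological monoids $M_i$ and then invoke the bar filtration. An $A_n$-space that is not $A_\infty$ cannot be rectified to a topological monoid: that would force an $A_\infty$-structure. The parenthetical ``harmless in the range $\le n$'' does not repair this, because the bar construction $B_kM$ already needs $M$ to be a monoid just to be defined, even for $k\le n$. One can instead work with Stasheff's projective spaces built directly from the associahedra and the $A_n$-form, but then the crisp identity $B(M_1\times M_2)=BM_1\times BM_2$ via Eilenberg--Zilber is no longer available for free, and the inclusion $\widehat P^k\hookrightarrow P^k(X_1\times X_2)$ must be constructed by hand from the combinatorics of the $K_i$. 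That is exactly the kind of work the cited reference has to do, and your sketch skips it.

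Beyond that, you yourself flag the obstruction step as the hard part and leave it as an outline. The decomposition of $(X_1\times X_2)^{\wedge k}$ into summands $X_1^{\wedge a}\wedge X_2^{\wedge b}$ and the dichotomy ``governed by the one-variable theory for $a+b\le k$'' versus ``the receiving cells are absent for $a+b>k$'' is the right heuristic, but it is not an argument: for $a+b>k$ one needs a precise cellular description of how $\phi_k$ interacts with the product splitting, and reassembling the summand-wise nullhomotopies so that conditions (a)--(c) survive simultaneously is a real bookkeeping problem, not a formality. Your proposal correctly locates where the content lies but does not close it.
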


In order to apply Lemma \ref{retraction proj} to our case, we need the following simple lemma. Let $X$ be an H-space. Then the projective space $P^2X$ is the cofiber of the Hopf construction $H\colon\Sigma X\wedge X\to\Sigma X$, where we write the inclusion $\Sigma X\to P^2X$ by $j$.

\begin{lemma}
  \label{retraction}
  Let $X_i$ be H-spaces for $i=1,2$. Then there is a homotopy commutative diagram
  $$\xymatrix{\Sigma(X_1\times X_2)\ar[r]^j\ar[d]_{\Sigma p_1+\Sigma p_2}&P^2(X_1\times X_2)\ar@{=}[d]\\
  \Sigma X_1\vee\Sigma X_2\ar[r]^{\hat{\jmath}}&P^2(X_1\times X_2)}$$
  where $p_i\colon X_1\times X_2\to X_i$ is the $i^\text{th}$ projection for $i=1,2$ and $\hat{\jmath}$ is the restriction of $j$.
\end{lemma}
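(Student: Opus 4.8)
The plan is to identify $P^2(X_1 \times X_2)$ explicitly as the cofiber of the Hopf construction on the product H-space, and then compare it with the wedge $\Sigma X_1 \vee \Sigma X_2$ by a direct homotopy. First I would recall that, since $P^2X$ is the mapping cone of the Hopf construction $H \colon \Sigma X \wedge X \to \Sigma X$, a map $\Sigma(X_1\times X_2) \to P^2(X_1\times X_2)$ is controlled entirely by the attaching map $H$ for $X_1\times X_2$ together with a null-homotopy of the relevant composite. The key observation is that the multiplication on $X_1\times X_2$ is the product of the multiplications, so the Hopf construction splits: using the decomposition of $\Sigma(X_1\times X_2)\wedge(X_1\times X_2)$ into wedge summands coming from the smash/product filtration, the attaching map $H$ for $X_1\times X_2$ restricts on the summands $\Sigma X_i \wedge X_i$ to the Hopf constructions $H_i$ for $X_i$, and restricts to null-homotopic maps on the ``mixed'' summands (those involving both $X_1$ and $X_2$), because the mixed terms of the product multiplication factor through a projection and hence their Hopf constructions are null.

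Concretely, I would proceed as follows. Write $q = \Sigma p_1 + \Sigma p_2 \colon \Sigma(X_1\times X_2)\to \Sigma X_1 \vee \Sigma X_2$, which is the standard stable splitting map in low filtration, and let $\hat{\jmath}\colon \Sigma X_1 \vee \Sigma X_2 \to P^2(X_1\times X_2)$ be the restriction of $j$ along the inclusions $\Sigma X_i \hookrightarrow \Sigma(X_1\times X_2)$ induced by the inclusions $X_i \hookrightarrow X_1 \times X_2$. The claim is $j \simeq \hat{\jmath}\circ q$. To see this, note both sides agree after restricting to $\Sigma X_1$ and to $\Sigma X_2$ (on $\Sigma X_i$, $q$ is the identity onto the $i$-th wedge summand and $\hat{\jmath}$ is by definition the restriction of $j$). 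Since the inclusion $\Sigma X_1 \vee \Sigma X_2 \to \Sigma(X_1\times X_2)$ has cofiber $\Sigma(X_1\wedge X_2)$, the difference $j - \hat{\jmath}\circ q$ factors through $\Sigma(X_1\wedge X_2)$, and I must show this factored map is null-homotopic. But $j$ extends over the cone on the Hopf construction $H$ for $X_1\times X_2$, and the restriction of $H$ to the summand $\Sigma(X_1\wedge X_2)\wedge(X_1\times X_2)$ is governed by the commutator/mixed part of the product multiplication; since the product multiplication $(x_1,x_2)(y_1,y_2) = (x_1y_1, x_2y_2)$ has no genuine mixing, the relevant composite $\Sigma(X_1\wedge X_2) \to \Sigma(X_1\times X_2) \xrightarrow{j} P^2(X_1\times X_2)$ is seen to be null by writing out the coordinatewise formula and using that $j$ kills the image of $H$.

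The main obstacle I expect is bookkeeping the null-homotopies of the mixed terms carefully, i.e.\ making precise the assertion that the Hopf construction of a product H-space ``splits'' along the James/smash filtration of $\Sigma(X_1\times X_2)\wedge(X_1\times X_2)$ with the cross terms being null. This is essentially the statement that the reduced diagonal and the coordinatewise multiplication interact the way one expects, and it can be handled either by an explicit coordinate formula for $H$ on $\Sigma(X_1\times X_2)\wedge(X_1\times X_2)$ (then checking each wedge summand) or by invoking the naturality of the Hopf construction under the H-maps $X_i \to X_1\times X_2$ and $X_1\times X_2 \to X_i$. Everything else is formal manipulation with cofiber sequences and the universal property of the mapping cone, which is exactly the kind of routine argument the problem invites us not to grind through.
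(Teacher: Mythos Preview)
Your overall strategy is close to the paper's, but there is a gap at the step where you claim the factored map $\Sigma(X_1\wedge X_2)\to P^2(X_1\times X_2)$ is null. You correctly observe that $j$ and $\hat{\jmath}\circ q$ agree on $\Sigma X_1\vee\Sigma X_2$, so using the splitting $\Sigma(X_1\times X_2)\simeq\Sigma X_1\vee\Sigma X_2\vee\Sigma(X_1\wedge X_2)$ it remains to compare them on the third summand. If $h\colon\Sigma(X_1\wedge X_2)\to\Sigma(X_1\times X_2)$ is the Hopf section, then indeed $j\circ h\simeq *$ since $h$ factors through the attaching map $H$. But you never check that $\hat{\jmath}\circ q\circ h\simeq *$, and this is the actual content of the lemma. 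Your remark that the Hopf construction ``splits with mixed terms null'' only addresses $j\circ h$, not $\hat{\jmath}\circ q\circ h$; and because $h$ is \emph{not} a co-H-map, you cannot simply distribute $(\Sigma p_1+\Sigma p_2)\circ h\simeq\Sigma p_1\circ h+\Sigma p_2\circ h$ and conclude from $\Sigma p_k\circ h\simeq *$.

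The paper fills exactly this gap by invoking Lemma~\ref{retraction proj}: since $\Sigma X_1\times\Sigma X_2\subset\widehat{P}^2(X_1,X_2)$, the map $\hat{\jmath}$ factors through the inclusion $\Sigma X_1\vee\Sigma X_2\hookrightarrow\Sigma X_1\times\Sigma X_2$. Once one composes into the \emph{product} $\Sigma X_1\times\Sigma X_2$, each coordinate can be checked separately, and each coordinate of $(\Sigma p_1+\Sigma p_2)\circ h$ is $\Sigma p_k\circ h\simeq *$. Without this factorization (equivalently, without knowing that the Whitehead product $[\hat{\jmath}\vert_{\Sigma X_1},\hat{\jmath}\vert_{\Sigma X_2}]$ vanishes in $P^2(X_1\times X_2)$), the nullity of both projections only tells you that $(\Sigma p_1+\Sigma p_2)\circ h$ lifts to the homotopy fiber of $\Sigma X_1\vee\Sigma X_2\to\Sigma X_1\times\Sigma X_2$, not that it is null. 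So to complete your argument you must either invoke Lemma~\ref{retraction proj} as the paper does, or give an independent reason why $\hat{\jmath}$ extends over $\Sigma X_1\times\Sigma X_2$.
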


\begin{proof}
  Let $h$ be the composite of the inclusion $\Sigma X_1\wedge X_2\to\Sigma(X_1\times X_2)\wedge(X_1\times X_2)$ and the Hopf construction $H\colon\Sigma(X_1\times X2)\wedge(X_1\times X_2)\to\Sigma(X_1\times X_2)$. By the definition of the Hopf construction, $h$ is the right homotopy inverse of the projection $\Sigma(X_1\times X_2)\to\Sigma(X_1\wedge X_2)$, and so the map
  $$\Sigma i_1\vee\Sigma i_2\vee h\colon\Sigma X_1\vee\Sigma X_2\vee\Sigma(X_1\wedge X_2)\to\Sigma(X_1\times X_2)$$
  is a homotopy equivalence, where $i_k\colon X_k\to X_1\times X_2$ is the inclusion for $k=1,2$. Let $r'$ be the composite of the homotopy inverse $(\Sigma i_1\vee\Sigma i_2\vee h)^{-1}$ and the projection $r\colon\Sigma X_1\vee\Sigma X_2\vee\Sigma(X_1\wedge X_2)\to\Sigma X_1\vee\Sigma X_2$. Then there is a homotopy cofibration
  $$\Sigma(X_1\wedge X_2)\xrightarrow{h}\Sigma(X_1\times X_2)\xrightarrow{r'}\Sigma X_1\vee\Sigma X_2$$
  and so one gets a homotopy commutative diagram
  $$\xymatrix{\Sigma(X_1\times X_2)\ar[r]^j\ar[d]_{r'}&P^2(X_1\times X_2)\ar@{=}[d]\\
  \Sigma X_1\vee\Sigma X_2\ar[r]^{\hat{\jmath}}&P^2(X_1\times X_2).}$$
  It remains to show $\hat{\jmath}\circ r'\simeq\hat{\jmath}\circ(\Sigma p_1+\Sigma p_2)$. Since $\Sigma X_1\times\Sigma X_2\subset \widehat{P}^2(X_1\times X_2)$, it follows from Lemma \ref{retraction proj} that $\hat{\jmath}$ factors through the inclusion $\Sigma i_1\vee\Sigma i_2\colon\Sigma X_1\vee\Sigma X_2\to\Sigma X_1\times \Sigma X_2$. Then it suffices to show $(\Sigma i_1\vee\Sigma i_2)\circ r'\simeq(\Sigma i_1\vee\Sigma i_2)\circ(\Sigma p_1+\Sigma p_2)$, or equivalently, $(\Sigma i_1\vee\Sigma i_2)\circ(\Sigma p_1+\Sigma p_2)\circ h\simeq*$. Now $(\Sigma i_1\vee\Sigma i_2)\circ(\Sigma p_1+\Sigma p_2)\circ h=\Sigma(i_1\circ p_1)\circ h+\Sigma(i_2\circ p_2)\circ h$. On the other hand, $\Sigma p_k\circ h\simeq*$ for $k=1,2$ by the definition of the Hopf construction. Thus $(\Sigma i_1\vee\Sigma i_2)\circ(\Sigma p_1+\Sigma p_2)\circ h\simeq*$ as desired.
\end{proof}

\begin{proposition}
  \label{A_n refined}
  Let $X_1,\ldots,X_l$ be $A_n$-spaces, $Y$ be a topological monoid, and $f\colon X_1\times\cdots\times X_l\to Y$be a map such that the restriction $f\vert_{X_i}$is an $A_n$-map for each $i$. Then $f$ is an $A_n$-map if and only if there is a map $\widehat{P}^n(X_1,\ldots,X_l)\to BY$ satisfying a homotopy commutative diagram
  $$\xymatrix{\Sigma X_1\vee\cdots\vee\Sigma X_l\ar[r]^(.7){\hat{f}}\ar[d]&BY\ar@{=}[d]\\
  \widehat{P}^n(X_1,\ldots,X_l)\ar[r]&BY}$$
  where $\hat{f}$ is the restriction of $\bar{f}$.
\end{proposition}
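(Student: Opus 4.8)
The plan is to deduce both implications from Lemma~\ref{A_n}, which recasts ``$f$ is an $A_n$-map'' as ``$\bar f\colon\Sigma(X_1\times\cdots\times X_l)\to BY$ extends over $P^n(X_1\times\cdots\times X_l)$'', combined with the retraction data relating $P^n(X_1\times\cdots\times X_l)$ and $\widehat P^n(X_1,\ldots,X_l)$ supplied by Lemma~\ref{retraction proj}, iterated over the $l$ factors. Write $j$ for the canonical inclusion $\Sigma(X_1\times\cdots\times X_l)\to P^n(X_1\times\cdots\times X_l)$ and $i_k\colon X_k\to X_1\times\cdots\times X_l$, $p_k\colon X_1\times\cdots\times X_l\to X_k$ for the inclusion and projection; note $\bar f\circ\Sigma i_k$ is the adjoint of $f|_{X_k}$, so the restriction of $\bar f$ to $\Sigma X_1\vee\cdots\vee\Sigma X_l$ is exactly $\hat f$.

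For the ``only if'' direction I would take the map $g\colon P^n(X_1\times\cdots\times X_l)\to BY$ with $g\circ j\simeq\bar f$ provided by Lemma~\ref{A_n} and precompose it with the map $\widehat P^n(X_1,\ldots,X_l)\to P^n(X_1\times\cdots\times X_l)$ of Lemma~\ref{retraction proj}. The top two rows of the diagram in Lemma~\ref{retraction proj} show that this composite restricted to $\Sigma X_1\vee\cdots\vee\Sigma X_l$ equals $g\circ j\circ(\Sigma i_1\vee\cdots\vee\Sigma i_l)\simeq\bar f\circ(\Sigma i_1\vee\cdots\vee\Sigma i_l)=\hat f$, which is the required diagram. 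For the ``if'' direction, given $\hat g\colon\widehat P^n(X_1,\ldots,X_l)\to BY$ restricting to $\hat f$, I would set $g=\hat g\circ\rho$, where $\rho\colon P^n(X_1\times\cdots\times X_l)\to\widehat P^n(X_1,\ldots,X_l)$ is the retraction of Lemma~\ref{retraction proj}; the bottom two rows of that diagram give $\rho\circ j\simeq(\mathrm{incl})\circ(\Sigma p_1+\cdots+\Sigma p_l)$, hence $g\circ j\simeq\hat f\circ(\Sigma p_1+\cdots+\Sigma p_l)$, and by Lemma~\ref{A_n} it then suffices to identify the right-hand side with $\bar f$.

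That identification is the crux, and it is where Lemma~\ref{retraction} and the hypothesis on the restrictions enter: applying Lemma~\ref{retraction} to $X_1\times\cdots\times X_{l-1}$ and $X_l$ and iterating, one obtains $j\simeq\hat\jmath\circ(\Sigma p_1+\cdots+\Sigma p_l)$ inside $P^2(X_1\times\cdots\times X_l)$, with $\hat\jmath$ the restriction of $j$ to the wedge; since each $f|_{X_i}$ is an $A_n$-map and $\hat g$ restricts compatibly on the pieces $\Sigma X_i\times\Sigma X_j\subseteq\widehat P^2(X_1,\ldots,X_l)\subseteq\widehat P^n(X_1,\ldots,X_l)$, the map $\bar f$ extends over $P^2(X_1\times\cdots\times X_l)$, and composing such an extension with $j\simeq\hat\jmath\circ(\Sigma p_1+\cdots+\Sigma p_l)$ yields $\bar f\simeq\hat f\circ(\Sigma p_1+\cdots+\Sigma p_l)$. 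I expect the main obstacle to be precisely this step --- showing that the ``linear part'' $\hat f\circ(\Sigma p_1+\cdots+\Sigma p_l)$ recovers all of $\bar f$ --- since it requires tracking the homotopies produced by Lemmas~\ref{retraction proj} and~\ref{retraction} simultaneously and using that the mixed data carried by $\hat g$ on $\widehat P^n(X_1,\ldots,X_l)$ forces $f$ to be an H-map; once that is in place, the two implications follow formally from Lemmas~\ref{A_n} and~\ref{retraction proj}.
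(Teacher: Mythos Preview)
Your overall plan coincides with the paper's: both directions are reduced to Lemma~\ref{A_n} via the maps of Lemma~\ref{retraction proj}, and the only substantive point is the identification $\bar f\simeq\hat f\circ(\Sigma p_1+\cdots+\Sigma p_l)$, which you correctly flag as the crux. The ``only if'' direction is exactly as in the paper.

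The gap is in your justification that $\bar f$ extends over $P^2(X_1\times\cdots\times X_l)$. As written it is circular: Lemma~\ref{retraction} only lets you replace $j$ by $\hat\jmath\circ(\Sigma p_1+\cdots+\Sigma p_l)$ \emph{after} you have a map out of $P^2(X_1\times\cdots\times X_l)$ extending $\bar f$, i.e.\ after you already know $f$ is an H-map. Trying to manufacture such an extension from $\hat g$ by composing with the retraction $P^2\to\widehat P^2$ only gives a map that restricts to $\hat f\circ(\Sigma p_1+\cdots+\Sigma p_l)$ on $\Sigma(X_1\times\cdots\times X_l)$, not to $\bar f$ --- which is exactly what you are trying to establish.

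The paper breaks this loop by treating $n=2$ as a separate base case, argued directly without Lemma~\ref{retraction}: given that each $f|_{X_i}$ is an H-map, $f$ is an H-map iff the Samelson products $\langle f|_{X_i},f|_{X_j}\rangle$ vanish for $i\ne j$; by the Samelson/Whitehead adjunction this is equivalent to $\hat f|_{\Sigma X_i\vee\Sigma X_j}$ extending over $\Sigma X_i\times\Sigma X_j$, hence (using again that each $f|_{X_i}$ is an H-map) to $\hat f$ extending over $\widehat P^2(X_1,\ldots,X_l)$. For $n\ge 3$ one then \emph{assumes} $f$ is an H-map (legitimate since $\widehat P^2\subset\widehat P^n$), so $\bar f$ factors through $P^2(X_1\times\cdots\times X_l)\to P^2Y\to BY$, and now Lemma~\ref{retraction} applied to this factorization yields $\bar f\simeq\hat f\circ(\Sigma p_1+\cdots+\Sigma p_l)$; from that point your argument goes through verbatim.
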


\begin{proof}
  We first consider the case $n=2$. The map $f$ is an $A_2$-map, that is, an H-map if and only if the Samelson products $\langle f\vert_{X_i},f\vert_{X_j}\rangle$ are trivial for all $i\ne j$. By the adjointness of Samelson products and Whitehead products, this is equivalent to that $\widehat{f}$ extends to a map
  $$\bigcup_{\substack{i_1+\cdots+i_l=2\\i_1,\ldots,i_l\le 1}}P^{i_1}X_1\times\cdots\times P^{i_l}X_l\to BY.$$
  Thus since each $f\vert_{X_i}$ is an H-map, such an extension exists if and only if $\hat{f}$ extends to $\widehat{P}^2(X_1,\ldots,X_l)\to BY$.

  We next consider the case $n\ge 3$. By the $n=2$ case, we may assume that $f$ is an H-map. Then by Lemma \ref{retraction}, there is a homotopy commutative diagram
  \begin{equation}
    \label{retraction P^2 B}
    \xymatrix{\Sigma(X_1\times\cdots\times X_l)\ar[d]_{\Sigma p_1+\cdots+\Sigma p_l}\ar[r]^j&P^2(X_1\times\cdots\times X_l)\ar@{=}[d]\ar[r]&P^2Y\ar@{=}[d]\ar[r]^{j_2}&BY\ar@{=}[d]\\
    \Sigma X_1\vee\cdots\vee\Sigma X_l\ar[r]^{\hat{\jmath}}&P^2(X_1\times\cdots\times X_l)\ar[r]&P^2Y\ar[r]^{j_2}&BY}
  \end{equation}
  where the composite of the upper row is $\bar{f}$ and the composite of the lower row is $\hat{f}$. Now suppose that there is a map $\widehat{P}^n(X_1,\ldots,X_l)\to BY$ extending $\hat{f}$. Then it follows from Lemmas \ref{retraction proj} and \ref{retraction} together with \eqref{retraction P^2 B} that there is a homotopy commutative diagram
  $$\xymatrix{\Sigma(X_1\times\cdots\times X_l)\ar[dd]_{\Sigma p_1+\cdots+\Sigma p_l}\ar[rr]^(.55){\bar{f}}\ar[rd]&&BY\ar@{=}[dd]\\
  &P^n(X_1\times\cdots\times X_n)\ar[dd]\\
  \Sigma X_1\vee\cdots\vee\Sigma X_l\ar[rr]^(.39){\hat{f}}|(.58)\hole\ar[rd]&&BY\\
  &\widehat{P}^n(X_1,\ldots,X_l)\ar[ur]}$$
  and thus by Lemma \ref{A_n} $f$ is an $A_n$-map.

  If $f$ is an $A_n$-map then by Lemma \ref{A_n} there is a map $P^n(X_1,\cdots,X_l)\to BY$ extending $\bar{f}$. Thus by Lemma \ref{retraction proj} one gets the desired map $\widehat{P}(X_1,\ldots,X_l)\to BY$. Therefore the proof is complete.
\end{proof}

Hereafter let $p$ be an odd prime and we localize at $p$. Let $G$ be a connected Lie group. By the classical result of Hopf, the rational cohomology of $G$ is an exterior algebra generated by odd degree elements. If generators are in dimensions $2n_1-1,\ldots,2n_r-1$ for $n_1\le\cdots\le n_r$ then we say that the type of $G$ is $(n_1,\ldots,n_r)$. Recall that $G$ is called $p$-regular if it is homotopy equivalent to a product of spheres such that
\begin{equation}
  \label{regular}
  G\simeq S^{2n_1-1}\times\cdots\times S^{2n_r-1}.
\end{equation}
It is well known that $G$ is $p$-regular if $p>n_r$. On the other hand, it is also well known that any odd sphere is an $A_{p-1}$-space. A cell decomposition of the projective spaces of odd sphres is given in \cite{HKT} as follows.

\begin{lemma}
  \label{P^lS}
  For $l\le p-1$ there is a $p$-local cell decomposition
  $$P^lS^{2n-1}\simeq S^{2n}\cup e^{4n}\cup\cdots\cup e^{2ln}.$$
\end{lemma}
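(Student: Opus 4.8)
The plan is to build the cell structure by induction on $l$, using the fact that an odd sphere $S^{2n-1}$ is an $A_{p-1}$-space so that $P^lS^{2n-1}$ is defined for $l\le p-1$, together with the standard quasi-fibration relating successive projective spaces. Recall that for an $A_\infty$-space $X$ (and $p$-locally $S^{2n-1}$ behaves as one through range $p-1$) there is a cofibration-like sequence giving $P^lX$ from $P^{l-1}X$ by attaching a copy of $X^{\wedge l}$ suspended once, i.e. the homotopy cofibre of a map $\Sigma^{?} X^{\wedge l}\to P^{l-1}X$ realizing the top filtration piece; equivalently $P^lX/P^{l-1}X\simeq \Sigma X^{\wedge l}$. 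For $X=S^{2n-1}$ we have $X^{\wedge l}\simeq S^{l(2n-1)}$, so $\Sigma X^{\wedge l}\simeq S^{l(2n-1)+1}$. The base case $l=1$ is $P^1S^{2n-1}=\Sigma S^{2n-1}=S^{2n}$. If inductively $P^{l-1}S^{2n-1}\simeq_{(p)} S^{2n}\cup e^{4n}\cup\cdots\cup e^{2(l-1)n}$ has cells only in dimensions $2jn$ for $1\le j\le l-1$, then attaching the single new cell of dimension $l(2n-1)+1$ must be compared with the claimed dimension $2ln$; but $2ln - (l(2n-1)+1) = l-1$, so the new cell is \emph{not} in the naively expected dimension unless $l=1$. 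This discrepancy signals that the correct statement must use the $p$-local structure more carefully — indeed the generators of $H^*(P^lS^{2n-1};\Z/p)$ sit in degrees $2n,4n,\ldots,2ln$ because $H^*(BS^{2n-1}_{(p)})$ is a divided power algebra on a class in degree $2n$ (as $S^{2n-1}_{(p)}$ has the rational homotopy type of $K(\Q,2n-1)$ through the relevant range), and $P^l$ is its $2ln$-skeleton.

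So the cleaner route, which I would actually carry out, is cohomological. First, localize at $p$ and observe that for $l\le p-1$ the space $P^lS^{2n-1}$ is (by its construction from the $A_{p-1}$-structure) the $2ln$-skeleton of $BS^{2n-1}_{(p)}$; more precisely $j_l\colon P^lS^{2n-1}\to BS^{2n-1}_{(p)}$ induces an isomorphism in $H^*(-;\Z/p)$ in degrees $\le 2ln$ and $H^*(P^lS^{2n-1};\Z/p)=0$ above degree $2ln$. Since $S^{2n-1}$ has no $p$-torsion and its mod $p$ cohomology is exterior on a degree $2n-1$ class $x$, the (mod $p$) cohomology Serre spectral sequence of $S^{2n-1}\to PS^{2n-1}\to BS^{2n-1}$ (or the bar spectral sequence) gives $H^*(BS^{2n-1};\Z/p)=\Z/p[y]$ with $|y|=2n$ through degrees $<2pn$; since $l\le p-1$ we have $2ln<2pn$, so in the range we care about $H^*(BS^{2n-1};\Z/p)=\Z/p[y]$ with basis $1,y,y^2,\ldots$ in degrees $0,2n,4n,\ldots$. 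Truncating at degree $2ln$ gives $H^*(P^lS^{2n-1};\Z/p)=\Z/p\{1,y,\ldots,y^l\}$, one generator in each even degree $2jn$, $0\le j\le l$. The same argument with $\Z_{(p)}$ coefficients shows the integral (localized) homology is free of rank one in each of those degrees and zero otherwise.

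It then remains to promote this homological statement to a CW-decomposition with exactly the cells $S^{2n}\cup e^{4n}\cup\cdots\cup e^{2ln}$. This is routine: $P^lS^{2n-1}$ is simply connected (it is a suspension-built space over a $1$-connected base, and $P^1=S^{2n}$), its $p$-localized homology is free of rank one concentrated in degrees $2n,4n,\ldots,2ln$, and a $1$-connected $p$-local CW complex with free homology in a set of even degrees admits a minimal cell structure with one cell in each such degree — build it skeleton by skeleton, at each stage choosing a map $S^{2jn}\to (\text{previous skeleton})$ hitting a homology generator, which exists because the relevant homotopy group surjects onto homology by Hurewicz/obstruction theory in this highly connected situation (all obstructions vanish since there is nothing in odd degrees). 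Matching this minimal complex with $P^lS^{2n-1}$ via a homology isomorphism, which is then a $p$-local equivalence by Whitehead, finishes the proof.

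The main obstacle is bookkeeping the range of validity: one must be careful that $l\le p-1$ is exactly what guarantees (a) $S^{2n-1}$ carries an $A_l$-structure so $P^lS^{2n-1}$ makes sense, and (b) $2ln<2pn$ so that $H^*(BS^{2n-1};\Z/p)$ is still a polynomial algebra (no truncation from the mod $p$ Steenrod/secondary phenomena, i.e. $y^p$ has not yet appeared) in the range under consideration. Everything else is standard minimal-cell-structure machinery.
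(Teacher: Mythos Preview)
The paper does not prove this lemma; it simply cites \cite{HKT}. So there is no ``paper's proof'' to compare against, and I will just assess your argument on its own.

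Your first approach is actually the right one, but you abandoned it because of a mistaken formula. The filtration quotient for Stasheff projective spaces is
\[
P^lX/P^{l-1}X \simeq (\Sigma X)^{\wedge l}=\Sigma^l X^{\wedge l},
\]
not $\Sigma(X^{\wedge l})$. (Sanity check: for $X=S^1$ this gives $\mathbb{CP}^l/\mathbb{CP}^{l-1}\simeq S^{2l}$, and for $X=S^3$ it gives $\mathbb{HP}^l/\mathbb{HP}^{l-1}\simeq S^{4l}$.) With the correct formula, the new cell attached in passing from $P^{l-1}S^{2n-1}$ to $P^lS^{2n-1}$ has dimension exactly $2ln$, the ``discrepancy'' you noticed evaporates, and the induction proves the lemma in one line. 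This is precisely how the result is obtained in \cite{HKT}.

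Your second, cohomological, route can be made to work but as written has a genuine gap: you repeatedly invoke $BS^{2n-1}_{(p)}$ and its cohomology, yet for $n\neq 1,2,4$ the sphere $S^{2n-1}_{(p)}$ is only an $A_{p-1}$-space and in general \emph{not} a loop space, so no classifying space exists. The hedge ``behaves as one through range $p-1$'' is not a proof. You can rescue the computation by working entirely inside the finite tower $P^1\subset P^2\subset\cdots\subset P^l$ and using the (correct) filtration quotients above to read off $H^*(P^lS^{2n-1};\Z_{(p)})$ directly; after that your minimal-cell-structure argument is fine. But once you have the right cofibre sequence there is no reason to take the detour through homology at all.
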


The following proposition  is proved in \cite{HKT}, which shows an intrinsic feature of the decomposition \eqref{regular} with respect to higher homotopy associativity.

\begin{proposition}
  \label{regular A_k}
  Let $G$ be a connected Lie group of type $(n_1,\ldots,n_r)$. If $p>kn_r$, implying $G$ is $p$-regular, then the product $A_k$-structure on $S^{2n_1-1}\times\cdots\times S^{2n_r-1}$ and the standard $A_k$-structure on $G$ are equivalent.
\end{proposition}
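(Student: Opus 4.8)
The plan is to promote the $p$-local homotopy equivalence coming from $p$-regularity to an $A_k$-equivalence. Fix a homotopy equivalence
$$e\colon S^{2n_1-1}\times\cdots\times S^{2n_r-1}\longrightarrow G,$$
where the source carries the product $A_k$-structure --- each odd sphere being an $A_{p-1}$-space, hence an $A_k$-space since $k\le kn_r<p$ --- and $G$ carries the $A_k$-structure underlying its group multiplication. A homotopy equivalence which is an $A_k$-map is automatically an $A_k$-equivalence, i.e.\ it admits an $A_k$-homotopy inverse (see \cite{St,IM}), so it suffices to prove that $e$ is an $A_k$-map, and this exhibits the two $A_k$-structures as equivalent.

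First I would verify the hypothesis of Proposition~\ref{A_n refined}, that each restriction $e_i=e|_{S^{2n_i-1}}\colon S^{2n_i-1}\to G$ is an $A_k$-map. By Lemma~\ref{A_n} this is the problem of extending the adjoint $\bar e_i\colon\Sigma S^{2n_i-1}=S^{2n_i}\to BG$ over $P^kS^{2n_i-1}$. By Lemma~\ref{P^lS} (valid as $k\le p-1$), $P^kS^{2n_i-1}$ is obtained from $S^{2n_i}$ by attaching cells in dimensions $4n_i,6n_i,\dots,2kn_i$, all of which are even and strictly less than $2p$ since $kn_i\le kn_r<p$. The obstruction to extending a map over a cell of dimension $m$ lies in $\pi_{m-1}(BG)\cong\pi_{m-2}(G)$; as $G$ is $p$-regular, $\pi_*(G)\cong\bigoplus_i\pi_*(S^{2n_i-1})$ after localization at $p$, and by Serre's theorem $\pi_q(S^{2n_i-1})_{(p)}$ equals $\mathbb{Z}_{(p)}$ for $q=2n_i-1$ and vanishes for all other $q$ with $0\le q<2p-2$. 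Since $m-2$ is even with $0\le m-2<2p-2$, every such obstruction group is zero, the extension exists, and $e_i$ is an $A_k$-map.

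The same input then settles $e$ itself. By Proposition~\ref{A_n refined}, $e$ is an $A_k$-map if and only if the restriction of $\bar e$ to $\Sigma S^{2n_1-1}\vee\cdots\vee\Sigma S^{2n_r-1}$ extends over $\widehat P^k(S^{2n_1-1},\dots,S^{2n_r-1})$. By Lemma~\ref{P^lS} this space is built from $\bigvee_iS^{2n_i}$ by attaching product cells of dimension $\sum_j2a_jn_j$ with $\sum_ja_j\le k$; these dimensions are again even and at most $2kn_r<2p$. Applying the obstruction-group vanishing verbatim, the extension exists, so $e$ is an $A_k$-map, hence an $A_k$-equivalence, which is the assertion.

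I expect the only genuinely delicate point to be the parity-and-dimension bookkeeping that forces every obstruction group to vanish --- in particular checking that in the range $0\le q<2p-2$ the $p$-local homotopy of $G$ is concentrated in odd degrees, including the degenerate case $n_1=1$ --- together with the standard fact cited above that an $A_k$-map which is a homotopy equivalence is an $A_k$-equivalence. Everything else is formal once Proposition~\ref{A_n refined} and Lemma~\ref{P^lS} are in hand, and the hypothesis $p>kn_r$ enters precisely through the bound $2kn_r<2p$ on the dimensions of the cells of $\widehat P^k(S^{2n_1-1},\dots,S^{2n_r-1})$.
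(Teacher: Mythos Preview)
Your argument is correct and is essentially the one the paper has in mind: the paper cites \cite{HKT} for Proposition~\ref{regular A_k} itself, but its proof of the refinement (Proposition~\ref{regular A_k refined}) runs exactly your obstruction-theoretic line---$\widehat P^k$ has only even cells of dimension at most $2kn_r<2p$, and $\pi_{2i-1}(BG)=0$ for $i\le p$ since $G$ is $p$-regular, so the extension required by Proposition~\ref{A_n refined} exists. Your preliminary verification that each $e_i=e|_{S^{2n_i-1}}$ is an $A_k$-map, needed as a hypothesis of Proposition~\ref{A_n refined}, is handled by the same vanishing and is implicit in the paper's treatment.
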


We slightly improve this proposition in our setting. Let $(G,H)$ and $p$ be as in Theorem \ref{main general} and suppose $G$ is $p$-regular. Then $G/H$ is a product of odd spheres, and if
$$G/H\simeq S^{2l_1-1}\times\cdots\times S^{2l_t-1}$$
for $l_1\le\cdots\le l_t$ then we say that $G/H$ has type $(l_1,\ldots,l_t)$. Hereafter, let $(n_1,\ldots,n_r)$, $(m_1,\ldots,m_s)$ and $(l_1,\ldots,l_t)$  be the types of $G,\,H$ and $G/H$, respectively. Then $(m_1,\ldots,m_s)$ and $(l_1,\ldots,l_t)$ are subsequences of $(n_1,\ldots,n_r)$.

\begin{proposition}
  \label{regular A_k refined}
  Let $(G,H)$ and $p$ be as in Theorem \ref{main general}. If $p\ge b_k$ with $k\ge 2$ then the product $A_k$-structure on $H\times S^{2l_1-1}\times\cdots\times S^{2l_t-1}$ and the standard $A_k$-structure on $G$ are equivalent.
\end{proposition}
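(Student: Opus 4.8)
The plan is to follow the proof of Proposition~\ref{regular A_k}, but to apply Proposition~\ref{A_n refined} with the list of $A_k$-spaces $H,S^{2l_1-1},\dots,S^{2l_t-1}$ so that the factor $H$ is never broken into spheres, and then to show that the cells that genuinely have to be crossed have dimension at most $2b_k$. Choose a $p$-local homotopy equivalence
$$\phi\colon H\times S^{2l_1-1}\times\cdots\times S^{2l_t-1}\longrightarrow G$$
realizing \eqref{Harris} whose restriction to $H$ is the inclusion $\iota\colon H\hookrightarrow G$; this is possible since \eqref{Harris} is a splitting of the fibration $H\to G\to G/H$. Because an $A_k$-map which is a homotopy equivalence is an $A_k$-equivalence, it suffices to prove that $\phi$ is an $A_k$-map. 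Throughout I will use the elementary fact that $\pi_j(G)_{(p)}=0$ for every even $j\le 2p-2$: this holds because $G\simeq_{(p)}S^{2n_1-1}\times\cdots\times S^{2n_r-1}$, the first $p$-torsion of $\pi_*(S^{2n-1})$ occurs in degree $\ge 2p$, and its rational homotopy is concentrated in odd degrees.

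First I would verify the hypotheses of Proposition~\ref{A_n refined}, that is, that $\phi$ restricted to each factor is an $A_k$-map. For $H$ this is immediate: $\iota$ is a homomorphism, hence a loop map, and the resulting extension of $\bar\iota$ over $P^kH$ is $B\iota\circ j_k^{BH}$, which factors through $BH$. For $S^{2l_i-1}$, note $k\le p-1$ (since $p\ge b_k\ge kl\ge 2k$), so by Lemma~\ref{P^lS} the space $P^kS^{2l_i-1}$ has cells only in even dimensions $\le 2kl_i\le 2b_k$; the obstructions to extending $\overline{\phi\vert_{S^{2l_i-1}}}$ over it lie in $\pi_{q-1}(BG)=\pi_{q-2}(G)_{(p)}$ with $q$ even and $q-2\le 2b_k-2\le 2p-2$, hence vanish, and $\phi\vert_{S^{2l_i-1}}$ is an $A_k$-map by Lemma~\ref{A_n}.

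By Proposition~\ref{A_n refined} it remains to extend the restriction $\hat\phi$ of $\bar\phi$ to $\Sigma H\vee\Sigma S^{2l_1-1}\vee\cdots\vee\Sigma S^{2l_t-1}$ over
$$\widehat{P}^k(H,S^{2l_1-1},\dots,S^{2l_t-1})=\bigcup_{a+c_1+\cdots+c_t=k}P^aH\times P^{c_1}S^{2l_1-1}\times\cdots\times P^{c_t}S^{2l_t-1}.$$
Over the summand with $a=k$, namely $P^kH$, the map $\hat\phi$ already extends, by the previous paragraph. For every other summand one has $a\le k-1$, and then $p\ge b_k>(k-1)m\ge am$, so Proposition~\ref{regular A_k} applies to $H$ at level $a$ and identifies $P^aH$ with $P^a$ of $S^{2m_1-1}\times\cdots\times S^{2m_s-1}$ equipped with the product $A_a$-structure; by Lemma~\ref{retraction proj} this retracts onto $\widehat{P}^a(S^{2m_1-1},\dots,S^{2m_s-1})$, which by Lemma~\ref{P^lS} has cells only in even dimensions $\le 2am$. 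Using these retractions on the $H$-coordinate of the summands with $a\le k-1$, coherently with their intersections and with the already-chosen extension over $P^kH$, the remaining extension problem is transferred to one over a complex built from the $\widehat{P}^a(S^{2m_1-1},\dots,S^{2m_s-1})$ and the $P^{c_i}S^{2l_i-1}$ with $a+\sum_ic_i=k$ and $a\le k-1$, whose cells are therefore even-dimensional of dimension at most
$$\max_{a+\sum_ic_i=k,\ a\le k-1}\Bigl(2am+\sum_i2c_il_i\Bigr)\le 2\max\{(k-1)m+l,\,kl\}=2b_k.$$
Hence all of the remaining obstructions lie in $\pi_{q-1}(BG)=\pi_{q-2}(G)_{(p)}$ with $q$ even and $q-2\le 2b_k-2\le 2p-2$, so they vanish and $\hat\phi$ extends as required. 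Thus $\phi$ is an $A_k$-map, and being a homotopy equivalence it is an $A_k$-equivalence, which proves the proposition.

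The step I expect to cost the most work is the transfer in the third paragraph. Its point is that $P^aH$ with its standard $A_k$-structure carries odd-dimensional cells in unboundedly high dimensions (the classes coming from $\Sigma^j(H^{\wedge j})$ that later die in $BH$), so a naive obstruction computation over $\widehat{P}^k(H,S^{2l_1-1},\dots,S^{2l_t-1})$ does not close up; the inequality $p\ge b_k>(k-1)m$ is exactly what lets one invoke Proposition~\ref{regular A_k} for every summand with $a\le k-1$ and replace each troublesome $P^aH$ by the small even complex $\widehat{P}^a(S^{2m_1-1},\dots,S^{2m_s-1})$, while the single remaining summand $P^kH$ is harmless precisely because $\phi\vert_H$ is an honest loop map. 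Making this transfer rigorous — choosing the retractions of Lemma~\ref{retraction proj} compatibly over all the summands and over $P^kH$, and justifying the cell-dimension bound above — is the technical heart of the argument, and is what the paper advertises as a refinement of the reduction of the projective spaces of $p$-regular Lie groups.
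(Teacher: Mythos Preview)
Your overall strategy coincides with the paper's: apply Proposition~\ref{A_n refined} with the list $H,S^{2l_1-1},\ldots,S^{2l_t-1}$, handle the summand $P^kH$ via the loop map $B\iota\circ j_k$, and for the summands with $a\le k-1$ invoke Proposition~\ref{regular A_k} (valid since $p\ge b_k>(k-1)m_s$) so that the obstruction computation takes place on the even complex of dimension $\le 2b_k$ built from $\widehat P^a(S^{2m_1-1},\ldots,S^{2m_s-1})$ and the $P^{c_j}S^{2l_j-1}$.

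The genuine gap is precisely the step you flag as the technical heart. The two pieces must agree on their overlap $P^{k-1}H$, where the chosen extension over $P^kH$ restricts to the canonical $B\iota\circ j_{k-1}$. But pulling an extension built on the small complex back along $P^{k-1}H\to\widehat P^{k-1}(S^{2m_1-1},\ldots,S^{2m_s-1})$ only produces \emph{some} map $P^{k-1}H\to BG$; even once one arranges that it restricts to $\bar\iota$ on $\Sigma H$, this is merely some $A_{k-1}$-form on $\iota$, not a priori the standard one $j_{k-1}$. Since $\widehat P^{k-1}$ is strictly smaller than $P^{k-1}H$, there is no mechanism to ``choose the retractions compatibly with $P^kH$'' as you propose: a general map out of $P^{k-1}H$ does not factor through that retraction.

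The paper closes this gap with an ingredient you do not mention. Having built the map on $X=\bigcup_{i+j=k,\,i\ne k}\widehat P^{\,i}(S^{2m_1-1},\ldots)\times\widehat P^{\,j}(S^{2l_1-1},\ldots)$ freely by obstruction theory, arranged so that on $Y=\widehat P^{\,k-1}(S^{2m_1-1},\ldots)$ it factors as $Y\xrightarrow{f}BH\to BG$, one composes with the map $g\colon P^{k-1}H\to Y$ supplied by Proposition~\ref{regular A_k} and Lemma~\ref{retraction proj}. Then $f\circ g\colon P^{k-1}H\to BH$ extends the canonical $\Sigma H\to BH$, and the paper invokes a theorem of Tsutaya \cite{Ts}: there exists an $A_{k-1}$-map $h\colon H\to H$ with $f\circ g\circ P^{k-1}h\simeq j_{k-1}$. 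Precomposing the entire construction on the $a\le k-1$ part with $g\circ P^{k-1}h\times 1$ forces the restriction to $P^{k-1}H$ to be $j_{k-1}$, and the gluing with $j_k\colon P^kH\to BG$ goes through. This adjustment via \cite{Ts} is the missing idea; your outline is correct in spirit but cannot be completed without it or an equivalent device.
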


\begin{proof}
  Note that $G$ is $p$-regular for $p\ge b_k$. By Lemma \ref{A_n} and Proposition \ref{A_n refined} it suffices to show that the natural map $\Sigma(H\vee S^{2l_1-1}\vee\cdots\vee S^{2l_t-1})\to BG$ extends to a map $\widehat{P}^k(H,S^{2l_1-1},\ldots,S^{2l_t-1})\to BG$. Let
  $$X=\bigcup_{\substack{i+j=k\\i\ne k}}\widehat{P}^i(S^{2m_1-1},\ldots,S^{2m_s-1})\times\widehat{P}^j(S^{2l_1-1},\ldots,S^{2l_t-1}).$$
  We first construct an extension $X\to BG$ for $p\ge b_k$. Let $b_k=\max\{(k-1)m_s+l_t,kl_t\}$. By Lemma \ref{P^lS}, $X$ consists of even dimensional cells and $\dim X\le 2b_k$. Since $G$ is $p$-regular, its homotopy groups can be calculated from those of spheres in \cite{To}. In particular, $\pi_{2i-1}(BG)=0$ for $i\le p$, and so we get an extension $X\to BG$ for $p\ge b_k$.

  We next construct an extension $\widehat{P}^k(H,S^{2l_1-1},\ldots,S^{2l_t-1})\to BG$ from an extension $X\to BG$. Let
  $$Y=\widehat{P}^{k-1}(S^{2m_1-1},\ldots,S^{2m_s-1})\subset X.$$
  By construction, we may assume that the restriction of an extension $X\to BG$ to $Y$ decomposes as $Y\xrightarrow{f}BH\to BG$, where the second map is the induced map from the inclusion $H\to G$. By Proposition \ref{regular A_k} there is a map $g\colon P^{k-1}H\to Y$ such that the composite $f\circ g\colon P^{k-1}H\to BH$ restricts to the canonical map $\Sigma H\to BH$. Then $f\circ g$ gives the standard $A_{k-1}$-structure of the identity map of $H$, and so by \cite{Ts} there is an $A_{k-1}$-map $h\colon H\to H$ such that $f\circ g\circ P^{k-1}h$ is homotopic to the canonical map $j_{k-1}\colon P^{k-1}H\to BH$, where $P^{k-1}h\colon P^{k-1}H\to P^{k-1}H$ is the induced map from $h$. Then the composite
  $$\bigcup_{\substack{i+j=k\\i\ne k}}P^iH\times\widehat{P}^j(S^{2l_1-1},\ldots,S^{2l_t-1})\xrightarrow{g\circ P^{k-1}h\times 1}X\to BG$$
  restricts to the canonical map $j_{k-1}\colon P^{k-1}H\to BG$. Since the canonical map $j_{k-1}$ extends to $j_k\colon P^kH\to BG$, we finally obtain the desired extension $\widehat{P}^k(H,S^{2l_1-1},\ldots,S^{2l_t-1})\to BG$, completing the proof.
\end{proof}

Now we prove Theorem \ref{main general}.

\begin{proof}
  [Proof of Theorem \ref{main}]
  By Lemma \ref{A_n} and Proposition \ref{regular A_k refined} it suffices to show that the adjoint $\bar{q}\colon\Sigma G\to BH$ of the projection $q\colon G\to H$ extends to $\widehat{P}^k(H,S^{2l_1-1},\ldots,S^{2l_t-1})\to BH$. Note that $q\colon G\to H$ is identified with the projection
  $$H\times S^{2l_1-1}\times\cdots\times S^{2l_t-1}\to H.$$
  Then there is a homotopy commutative diagram
  $$\xymatrix{\Sigma G\ar[r]^{\Sigma q}\ar[d]&\Sigma H\ar[d]\\
  \widehat{P}^k(H,S^{2l_1-1},\ldots,S^{2l_t-1})\ar[r]&P^kH}$$
  where the bottom map is the projection. Thus since the composite $\Sigma G\xrightarrow{\Sigma q}\Sigma H\to BH$ is $\bar{q}$, the composite
  $$\widehat{P}^k(H,S^{2l_1-1},\ldots,S^{2l_t-1})\to P^kH\xrightarrow{j_k}BH$$
  is the desired extension of $\bar{q}$.
\end{proof}


\section{Cohomology calculation}

For the rest of the paper, cohomology is assumed to be with mod $p$ coefficients for an odd prime $p$. This section calculates $\mathcal{P}^1x$ for some cohomology classes $x$ of the classifying spaces of Lie groups, which we are going to use. We first consider $SU(n)$. The cohomology of $BSU(n)$ is given by
\begin{equation}
  \label{H(BSU)}
  H^*(BSU(n))=\Z/p[c_2,\ldots,c_n]
\end{equation}
where $c_i$ is the Chern class. In \cite{Sh}, $\mathcal{P}^ic_k$ is determined, and in particular,
\begin{multline}
  \label{mod p Wu}
  \mathcal{P}^1c_k=\sum_{2i_2+3i_3+\cdots+ni_n=k+p-1}(-1)^{i_2+\cdots+i_n-1}\frac{(i_2+\cdots+i_n-1)!}{i_2!\cdots i_n!}\\
  \times\left(k-1-\frac{\sum_{j=2}^{k-1}(k+p-1-j)i_j}{i_2+\cdots+i_n-1}\right)c_2^{i_2}\cdots c_n^{i_n}.
\end{multline}
If a polynomial $P$ includes a monomial $M$ then we write $P\ge M$. By \eqref{mod p Wu} one gets:

\begin{lemma}
  \label{P^1 SU}
  Let $k\ge 2$.
  \begin{enumerate}
    \item In $H^*(BSU(2n+1))$,
    \begin{align*}
      \mathcal{P}^1c_2&\ge
      \begin{cases}
        (-1)^{k-1}c_{2n+1}^{k-1}c_{p-(k-1)(2n+1)+1}&((k-1)(2n+1)\le p< k(2n+1)-1)\\
        (-1)^{k-1}\frac{1}{k}c_{2n+1}^k&(p=k(2n+1)-1)\\
      \end{cases}\\
      \mathcal{P}^1c_4&\ge -3c_{2n+1}c_3\quad(p=2n+1).
    \end{align*}
    \item In $H^*(BSU(2n))$ with $k,n\ge 2$,
    \begin{align*}
      \mathcal{P}^1c_2&\ge(-1)^{k-1}(k-1)c_{2n}^{k-2}c_{2n-1}c_{p-2(k-1)n+2}&&(2(k-1)n\le p<2kn-1)\\
      \mathcal{P}^1c_4&\ge(-1)^{k-1}3(k-1)c_{2n}^{k-2}c_{2n-1}c_3&&(p=2(k-1)n-1).
    \end{align*}
  \end{enumerate}
\end{lemma}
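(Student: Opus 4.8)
The plan is to extract each displayed term directly from Shay's formula \eqref{mod p Wu}, so that the proof amounts to bookkeeping rather than to any new idea. Write $N$ for the top index ($N=2n+1$ or $2n$). For $\mathcal P^1 c_K$ the right-hand side of \eqref{mod p Wu} is a sum over the tuples $(i_2,\dots,i_N)$ of nonnegative integers with $2i_2+3i_3+\cdots+Ni_N=K+p-1$, the term indexed by such a tuple being a scalar multiple of $c_2^{i_2}\cdots c_N^{i_N}$. For each monomial in the statement I would write down the tuple producing it and observe that the weight equation, together with the requirement that every index occurring lie in $[2,N]$, is exactly the hypothesis on $p$. For instance $c_{2n+1}^{k-1}c_{p-(k-1)(2n+1)+1}$ comes from $i_{2n+1}=k-1$, $i_{p-(k-1)(2n+1)+1}=1$, all other $i_j=0$: the weight equation is then an identity, and $2\le p-(k-1)(2n+1)+1\le 2n$ rearranges to $(k-1)(2n+1)\le p<k(2n+1)-1$; the borderline $p=k(2n+1)-1$ is captured instead by the tuple $i_{2n+1}=k$, yielding $c_{2n+1}^k$. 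The tuples for the remaining monomials are $(i_{2n+1},i_3)=(1,1)$ for $c_{2n+1}c_3$, $(i_{2n},i_{2n-1},i_{p-2(k-1)n+2})=(k-2,1,1)$ for $c_{2n}^{k-2}c_{2n-1}c_{p-2(k-1)n+2}$, and $(i_{2n},i_{2n-1},i_3)=(k-2,1,1)$ for $c_{2n}^{k-2}c_{2n-1}c_3$, and in each case the index constraints translate into the stated condition on $p$.

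Next I would compute the coefficient that \eqref{mod p Wu} attaches to the chosen tuple. Setting $\sigma=i_2+\cdots+i_N$ --- so $\sigma=k$ in the four assertions that mention $k$ and $\sigma=2$ for $\mathcal P^1 c_4$ in $H^*(BSU(2n+1))$ --- that coefficient is
$$(-1)^{\sigma-1}\,\frac{(\sigma-1)!}{i_2!\cdots i_N!}\left(K-1-\frac{1}{\sigma-1}\sum_{j=2}^{K-1}(K+p-1-j)\,i_j\right),$$
and I would evaluate it factor by factor. The sign is $(-1)^{k-1}$ (or $-1$). The multinomial factor equals $1$ when the nonzero entries are one exponent and a single $1$ (or two $1$'s when $\sigma=2$), it equals $k-1$ when they are one $k-2$ and two $1$'s, and it equals $\tfrac1k$ for the tuple $i_{2n+1}=k$. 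The correction factor equals $K-1=1$ when $K=2$ (the sum is empty), while for $K=4$ the only index lying in $\{2,3\}$ is $i_3=1$, for which $(K+p-1-3)i_3=p\equiv0\pmod p$, so the correction factor is $\equiv 3$. Multiplying these three pieces yields precisely the coefficients in the lemma.

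The substance of the argument then lies in two routine verifications, the second of which I expect to be the only real nuisance. First, the fractional coefficients must be units modulo $p$: $\tfrac1k$ is legitimate because $p=k(2n+1)-1$ forces $p\nmid k$, the $\tfrac1{k-1}$ used silently in the $K=4$ cases is legitimate because $p=2(k-1)n-1$ (resp.\ $p=2n+1$) forces $p\nmid k-1$, and the scalar $3$ in the $\mathcal P^1 c_4$ statements is a unit except in very small ranks (in $H^*(BSU(2n+1))$ the condition that $c_4$ be a generator already forces $N\ge5$, hence $p=2n+1\ge5$). Second, one must confirm that the displayed term is a genuine single summand of \eqref{mod p Wu}: that no competing tuple contributes the same monomial, and that the ``extra'' generator $c_{p-(k-1)(2n+1)+1}$, $c_3$, or $c_{p-2(k-1)n+2}$ is distinct from $c_{2n-1}$ and $c_{2n}$. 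This forces one to inspect the extreme admissible values of $p$ and the lowest ranks, but at the parameter values actually in play the weight equation together with the inequalities pins the tuple down, so the extraction goes through.
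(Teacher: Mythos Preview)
Your proposal is correct and is exactly the approach the paper takes: the paper's entire proof is the single clause ``By \eqref{mod p Wu} one gets,'' and what you have written is simply the bookkeeping that this clause suppresses. Your identification of the relevant tuples, the evaluation of the sign, multinomial, and correction factors, and your flagging of the edge cases (distinctness of the extra index from $2n-1,2n,2n+1$, and nonvanishing of $\tfrac1k$, $\tfrac1{k-1}$, $3$ modulo $p$) are all to the point; there is nothing further to add.
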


We next consider $SO(2n+1)$. The cohomology of $BSO(2n+1)$ is given by
\begin{equation}
  \label{H(BSO)}
  H^*(BSO(2n+1))=\Z/p[p_1,\ldots,p_n]
\end{equation}
where $p_i$ is the Pontrjagin class. The inclusion $c\colon SO(2n+1)\to SU(2n+1)$ satisfies $c^*(c_{2k})=(-1)^kp_k$ and $c^*(c_{2k+1})=0$, and so by \eqref{mod p Wu},
\begin{multline}
  \label{mod p Wu Pontrjagin}
  \mathcal{P}^1p_k=\sum_{i_1+2i_2+\cdots+ni_n=k+\frac{p-1}{2}}(-1)^{i_1+\cdots+i_n+\frac{p+1}{2}}\frac{(i_1+\cdots+i_n-1)!}{i_1!\cdots i_n!}\\
  \times\left(2k-1-\frac{\sum_{j=1}^{k-1}(2k+p-1-2j)i_j}{i_1+\cdots+i_n-1}\right)p_1^{i_1}\cdots p_n^{i_n}.
\end{multline}
Thus by focusing on $BSO(15)$, one gets the following.

\begin{lemma}
  \label{P^1 SO(15)}
  Let $k\ge 2$. In $H^*(BSO(15))$ the following hold:
  \begin{align*}
  \mathcal{P}^1p_1&\ge
  \begin{cases}
    (-1)^{k+\frac{p-1}{2}}\frac{5}{12}p_7p_6^{k-2}p_2&(p=12k-7)\\
    (-1)^{k+\frac{p-1}{2}}p_7p_6^{k-2}&(p=12k-11)
  \end{cases}\\
  \mathcal{P}^1p_2&\ge(-1)^{k+\frac{p-1}{2}}3p_7p_6^{k-2}\quad(p=12k-13)\\
  \mathcal{P}^1p_4&\ge
  \begin{cases}
    (-1)^{k+\frac{p-1}{2}}\frac{29}{12}p_7p_6^{k-2}p_2&(p=12k-13)\\
    (-1)^{k+\frac{p-1}{2}}7p_7p_6^{k-2}&(p=12k-17).
  \end{cases}
  \end{align*}
\end{lemma}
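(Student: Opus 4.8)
The statement to prove is Lemma~\ref{P^1 SO(15)}, which is a direct computational consequence of the general Wu-type formula \eqref{mod p Wu Pontrjagin} for $\mathcal{P}^1 p_k$ in $H^*(BSO(2n+1))$, specialized to $n=7$. So the plan is almost entirely bookkeeping with that formula, and the real work is to isolate, for each of the four prescribed primes $p$, the single monomial that the lemma claims appears in $\mathcal{P}^1 p_k$ for the relevant index $k$ (here $k=1,2,4$).

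First I would fix notation: in $H^*(BSO(15)) = \Z/p[p_1,\ldots,p_7]$ with $\deg p_i = 4i$, the formula \eqref{mod p Wu Pontrjagin} expresses $\mathcal{P}^1 p_k$ as a sum over exponent tuples $(i_1,\ldots,i_7)$ of nonnegative integers satisfying the degree constraint $i_1 + 2i_2 + \cdots + 7 i_7 = k + \frac{p-1}{2}$, with each term carrying the sign $(-1)^{i_1 + \cdots + i_7 + \frac{p+1}{2}}$, the multinomial coefficient $\frac{(i_1 + \cdots + i_7 - 1)!}{i_1!\cdots i_7!}$, and the rational ``weight'' factor $2k - 1 - \frac{\sum_{j=1}^{k-1}(2k+p-1-2j) i_j}{i_1 + \cdots + i_7 - 1}$. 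For each line of the lemma I would plug in the stated value of $p$ (as a function of $k$) and the stated value of the lower index (the subscript on the left-hand $p_j$), solve the degree constraint for the exponent tuple corresponding to the claimed monomial $p_7 p_6^{k-2} p_2$ (or $p_7 p_6^{k-2}$), and then evaluate sign, multinomial coefficient, and weight factor to confirm they multiply to the asserted coefficient ($\frac{5}{12}$, $1$, $3$, $\frac{29}{12}$, $7$, etc.). Since the lemma only asserts ``$\ge$'', i.e.\ that this monomial occurs, I do not need to worry about whether other monomials of the same total degree also appear — so there is no cancellation issue to rule out, only a single-term computation to carry out four (or five) times.

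The one point that needs genuine care, and which I expect to be the main obstacle, is matching the degree constraint to the target monomial and checking consistency of the prime conditions. For instance, for $\mathcal{P}^1 p_4 \ge (-1)^{k+\frac{p-1}{2}}\frac{29}{12} p_7 p_6^{k-2} p_2$ at $p = 12k - 13$: here the target tuple is $i_7 = 1$, $i_6 = k-2$, $i_2 = 1$, all others $0$, so $i_1 + \cdots + i_7 = k$ and $\sum j i_j = 7 + 6(k-2) + 2 = 6k - 3$, which must equal $k + \frac{p-1}{2} = k + \frac{12k-14}{2} = 7k - 7$; this forces $6k - 3 = 7k - 7$, i.e.\ $k = 4$, so in fact this is a statement only at $k=4$, $p = 35$ (not prime!) — which signals that I must re-read the exponent conventions: the monomial $p_6^{k-2}$ presumably should make the identity hold for all $k$, so either the degree of $p_i$ or the bound involves $k$ differently than I first read, or the claimed monomial has a $k$-dependent factor I am miscounting. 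Resolving exactly which tuple the formula picks out — and confirming the stated $p = 12k - c$ makes the degree equation an identity in $k$ rather than pinning $k$ — is the crux; once the tuple is correctly identified, the sign is immediate, the multinomial coefficient is $\frac{(k-1)!}{(k-2)!\,1!\,1!} = k-1$ (or similar), and the weight factor is a short rational arithmetic check. After verifying all the $SO(15)$ lines this way I would note, as the authors surely intend, that $BSO(15)$ is a convenient quotient target because the pair $(E_6, F_4)$ and $(Spin(8), G_2)$ computations route through $SU$ and $SO$ of small rank, so these specific coefficients feed directly into the later obstruction arguments.

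Finally, I would remark that the hypothesis ``$p$ odd'' guarantees $\frac{p\pm1}{2}$ are integers so the sign is well-defined, and that for each stated prime one checks $p > 7$ is \emph{not} required — indeed the small cases like $p = 12k-17$ for small $k$ can give $p$ below the $p$-regularity range, but the lemma is purely a cohomology-operation identity valid for every odd prime, independent of $p$-regularity. The proof is therefore: ``Apply \eqref{mod p Wu Pontrjagin} with $n = 7$; for each displayed line, the asserted monomial corresponds to the exponent tuple determined by the degree constraint $i_1 + 2i_2 + \cdots + 7i_7 = k + \frac{p-1}{2}$, and a direct evaluation of the sign, multinomial coefficient, and weight factor in \eqref{mod p Wu Pontrjagin} yields the stated coefficient. $\square$''
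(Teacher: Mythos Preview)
Your approach is exactly the paper's: the lemma is stated without proof beyond ``by focusing on $BSO(15)$, one gets the following,'' i.e.\ it is a direct computation from \eqref{mod p Wu Pontrjagin} with $n=7$.

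The ``obstacle'' you flag is not real; it is a collision of variable names. In \eqref{mod p Wu Pontrjagin} the symbol $k$ denotes the subscript of the Pontrjagin class, whereas in the lemma $k$ is the parameter in $p=12k-c$. For $\mathcal{P}^1p_4$ the degree constraint is $\sum j\,i_j = 4 + \tfrac{p-1}{2}$, not $k + \tfrac{p-1}{2}$; with $p=12k-13$ this gives $4+6k-7=6k-3$, which the tuple $(i_2,i_6,i_7)=(1,k-2,1)$ satisfies identically in $k$. No special value of $k$ is forced.

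The one point you left implicit, and which explains the otherwise mysterious rational coefficients, is that the answer is taken in $\Z/p$ where $12k\equiv c$. Thus for instance at $p=12k-13$ one has $k-1\equiv\tfrac{1}{12}$; the multinomial factor $(k-1)$ together with the weight factor $7-\tfrac{(p+3)i_2}{k-1}=7-36=-29$ gives $(k-1)\cdot(-29)\equiv-\tfrac{29}{12}$, and combining with the sign yields the stated $(-1)^{k+\frac{p-1}{2}}\tfrac{29}{12}$. The other lines go the same way.
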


The cohomology of $BSO(2n)$ is given by
$$H^*(BSO(2n))=\Z/p[p_1,\ldots,p_{n-1},e_n]$$
where $p_i$ is the Pontrjagin class and $e_n$ is the Euler class. Then the inclusion $j\colon SO(2n)\to SO(2n+1)$ satisfies $j^*(p_i)=p_i$ for $i=1,\ldots,n-1$ and $j^*(p_n)=e_n^2$. Thus by \eqref{mod p Wu Pontrjagin} one gets:

\begin{lemma}
  \label{P^1 SO(2n)}
  In $H^*(BSO(2n))$, for $2(k-2)(n-1)+2\le p<2(k-1)(n-1)+2$ with $k\ge 3$
    $$\mathcal{P}^1p_1\ge (-1)^{k+\frac{p-1}{2}}(k-2)p_{n-1}^{k-3}p_{\frac{p-1}{2}-(k-2)(n-1)}e_n^2$$
    and for $p\le 2n-1$
    $$\mathcal{P}^1p_{n-\frac{p-1}{2}}\ge (-1)^\frac{p-1}{2}2ne_n^2.$$
\end{lemma}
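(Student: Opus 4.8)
The statement to prove is Lemma~\ref{P^1 SO(2n)}, computing $\mathcal{P}^1$ on certain generators of $H^*(BSO(2n))$.

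\medskip

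The plan is to derive everything from the formula \eqref{mod p Wu Pontrjagin} for $\mathcal{P}^1p_k$ in $H^*(BSO(2n+1))$ together with the two facts about the inclusion $j\colon SO(2n)\to SO(2n+1)$, namely $j^*(p_i)=p_i$ for $i\le n-1$ and $j^*(p_n)=e_n^2$. Since $j^*$ is a ring map commuting with the Steenrod operations, it suffices to extract from \eqref{mod p Wu Pontrjagin} the monomials in $\mathcal{P}^1p_1$ (resp.\ $\mathcal{P}^1p_{n-\frac{p-1}{2}}$) that survive under $j^*$, i.e.\ those in which $p_n$ appears to the first power and no higher (so that $j^*$ turns $p_n$ into $e_n^2$ and does not kill the term), then read off the stated leading monomials.

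\medskip

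First I would treat the second assertion, where $k$ is replaced by $n-\frac{p-1}{2}$ in \eqref{mod p Wu Pontrjagin}. The index condition becomes $i_1+2i_2+\cdots+ni_n=n$; the monomial $p_n$ itself corresponds to $i_n=1$ and all other $i_j=0$, which forces the factorial weight $(i_1+\cdots+i_n-1)!/(i_1!\cdots i_n!)=0!/1!=1$, and the bracketed coefficient degenerates (the sum in the numerator is empty) to $2k-1 = 2(n-\tfrac{p-1}{2})-1 = 2n-p$. Being careful with the sign $(-1)^{i_1+\cdots+i_n+\frac{p+1}{2}} = (-1)^{1+\frac{p+1}{2}}$ and the convention for the empty correction term, I expect this to reduce to $(-1)^{\frac{p-1}{2}}2n\,p_n$, which maps under $j^*$ to $(-1)^{\frac{p-1}{2}}2n\,e_n^2$. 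For the first assertion, $k$ is replaced by $1$ and $p$ by a value in the range $2(k-2)(n-1)+2\le p<2(k-1)(n-1)+2$; wait—rather, one applies the formula for $\mathcal{P}^1p_1$ directly, so the index equation is $i_1+2i_2+\cdots+ni_n=1+\frac{p-1}{2}$, and I would single out the partition with $i_{n-1}=k-3$, $i_n=1$, $i_{\frac{p-1}{2}-(k-2)(n-1)}=1$ (the stated range ensures this last index is a legal positive integer $\le n-1$), compute its multinomial coefficient $(k-2-1+1)!/((k-3)!\,1!\,1!) = (k-2)!/(k-3)! = k-2$, check that the bracketed factor contributes a unit (absorbed into the constant), and collect the sign $(-1)^{1+\cdots+\frac{p+1}{2}}=(-1)^{k+\frac{p-1}{2}}$.

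\medskip

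The main obstacle is bookkeeping rather than conceptual: I must verify that the displayed monomials genuinely occur in \eqref{mod p Wu Pontrjagin} (the partition is admissible and the overall coefficient is nonzero mod $p$), and—more delicately—that no \emph{other} partition produces the same monomial $p_{n-1}^{k-3}p_{\frac{p-1}{2}-(k-2)(n-1)}e_n^2$ after applying $j^*$ (where $p_n\mapsto e_n^2$), since such a collision could alter the stated coefficient. Because the paper only claims ``$\ge$'' (the polynomial \emph{includes} this monomial) rather than an exact identity, strictly speaking it is enough to show the total coefficient of that monomial is a nonzero element of $\Z/p$; but I would still check that for the chosen partition the bracketed factor in \eqref{mod p Wu Pontrjagin} is a $p$-adic unit, since the range on $p$ is exactly what is needed to keep all the relevant indices and coefficients from vanishing. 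The same remarks apply to Lemma~\ref{P^1 SO(15)} as a sanity check: specializing $n=7$ in the $p_1$, $p_2$, $p_4$ computations should reproduce those fractions $\tfrac{5}{12}$, $3$, $\tfrac{29}{12}$, $7$, and matching them is the best available consistency test for the sign and coefficient conventions.
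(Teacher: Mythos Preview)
Your approach is exactly the one the paper has in mind: the lemma is stated immediately after the sentence ``Thus by \eqref{mod p Wu Pontrjagin} one gets:'' and carries no separate proof, so the intended argument is precisely to specialise the Pontrjagin Wu formula in $H^*(BSO(2n+1))$ and push it through $j^*$ using $j^*(p_i)=p_i$ for $i\le n-1$ and $j^*(p_n)=e_n^2$. Your extraction of the relevant partitions, the multinomial $(k-2)!/(k-3)!=k-2$, the sign $(-1)^{k-1+\frac{p+1}{2}}=(-1)^{k+\frac{p-1}{2}}$, and the reduction $2k-1=2n-p\equiv 2n\pmod p$ are all correct, and your observation that no other partition with $i_n=1$ can yield the same monomial after applying $j^*$ disposes of the collision worry.
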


\begin{lemma}
  \label{SO(8)}
  Let $k\ge 2$. In $H^*(BSO(8))$ the following hold:
  \begin{align*}
    \mathcal{P}^1p_1&\ge(-1)^{k+\frac{p-1}{2}}\frac{1}{12}p_3^{k-2}p_2^2&&(p=6k-5)\\
    \mathcal{P}^1p_2&\ge(-1)^{k+\frac{p+1}{2}}\frac{1}{4}p_3^{k-2}p_2^2&&(p=6k-7).
  \end{align*}
\end{lemma}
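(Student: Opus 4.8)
The plan is to specialize the general formula \eqref{mod p Wu Pontrjagin} for $\mathcal{P}^1 p_k$ to the case $n=4$ in $H^*(BSO(8))$, and then read off the coefficient of the specified monomial in the appropriate prime. Recall that $H^*(BSO(8))=\Z/p[p_1,p_2,p_3,e_4]$ and that the Euler class contributes via $j^*(p_4)=e_4^2$; since we only need $\mathcal{P}^1 p_1$ and $\mathcal{P}^1 p_2$, and the claimed target monomials $p_3^{k-2}p_2^2$ all lie in the subalgebra generated by the $p_i$, the computation is governed by the Pontrjagin-class version of the Wu formula. For the first identity, $p=6k-5$, so $k+\tfrac{p-1}{2}=k+3k-3=4k-3$; we must solve $i_1+2i_2+3i_3=1+\tfrac{p-1}{2}=3k-2$ and look for the contribution of $p_2^2 p_3^{k-2}$, i.e. $i_1=0$, $i_2=2$, $i_3=k-2$, $i_4=0$ (the subscript $n=4$ term vanishes for this monomial). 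One checks $i_2+2\cdot 2\cdot i_2 ... $ wait — rather, one checks $0+2\cdot2+3(k-2)=3k-2$, which matches. Plugging into \eqref{mod p Wu Pontrjagin} with $k=1$ there (the outer index of $p_k$), the multinomial coefficient is $\tfrac{(2+(k-2)-1)!}{0!\,2!\,(k-2)!}=\tfrac{(k-1)!}{2(k-2)!}=\tfrac{k-1}{2}$, the sign is $(-1)^{i_1+\cdots+i_n+\frac{p+1}{2}}$, and the bracketed factor $2\cdot1-1-\frac{\sum_{j=1}^{0}(\cdots)}{i_1+\cdots+i_n-1}=1$ since the inner sum over $j=1,\ldots,k-1$ with $k=1$ is empty. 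Combining the sign with the one recorded in the statement and simplifying $\tfrac{k-1}{2}$ against the fractions in the Wu formula is where one has to be careful; I expect everything to collapse to the stated $(-1)^{k+\frac{p-1}{2}}\tfrac{1}{12}$.

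For the second identity, $p=6k-7$, the outer index is now $k'=2$ in $\mathcal{P}^1 p_2$, so the constraint becomes $i_1+2i_2+3i_3+4i_4=2+\tfrac{p-1}{2}=2+3k-4=3k-2$, and the monomial $p_2^2p_3^{k-2}$ corresponds again to $(i_1,i_2,i_3,i_4)=(0,2,k-2,0)$, which satisfies $2\cdot2+3(k-2)=3k-2$. This time the bracketed factor is $2\cdot 2-1-\frac{(2\cdot2+p-1-2\cdot1)i_1+(2\cdot2+p-1-2\cdot2)i_2}{i_1+i_2+i_3+i_4-1}=3-\frac{(p+1)\cdot 2 +\, 0 }{k-1}$, except the $j=1$ term has coefficient $i_1=0$ so only the $j=... $; more precisely only $j$ with $1\le j\le k'-1=1$ appears, namely $j=1$ with coefficient $i_1=0$, so the inner sum vanishes and the bracket is just $3$. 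With the multinomial coefficient $\tfrac{k-1}{2}$ as before, one gets $\pm\tfrac{3(k-1)}{2}$ before the Wu-formula prefactor; I expect this to simplify to $\tfrac14$ as claimed, with the sign tracking $(-1)^{k+\frac{p+1}{2}}$ (note the shift by one in the exponent of $-1$ relative to the first case, which is consistent with the parity of $\frac{p\mp1}{2}$ for the two different primes). The main point to verify is that no \emph{other} solution of the constraint produces the same monomial $p_3^{k-2}p_2^2$ — but since the monomial determines $(i_1,\ldots,i_4)$ uniquely, this is immediate, and the only real work is the arithmetic simplification of the rational coefficient coming out of \eqref{mod p Wu Pontrjagin}.

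The main obstacle, such as it is, is purely bookkeeping: correctly identifying the parity of the global sign $(-1)^{i_1+\cdots+i_n+\frac{p+1}{2}}$ in each of the two primes (this is why one identity has $\frac{p-1}{2}$ and the other $\frac{p+1}{2}$ in the stated exponent), and carefully evaluating the bracketed rational factor in \eqref{mod p Wu Pontrjagin}, whose denominator $i_1+\cdots+i_n-1=k-1$ could a priori vanish mod $p$ — but since $p=6k-5$ or $6k-7$ is much larger than $k-1$ for $k\ge 2$, there is no division-by-zero issue and the coefficient is a genuine nonzero element of $\Z/p$. Once these are pinned down the lemma follows by direct substitution; no structural input beyond \eqref{mod p Wu Pontrjagin} is needed.
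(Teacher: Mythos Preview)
Your approach is correct and matches the paper's: Lemma~\ref{SO(8)} is stated there without proof as a direct consequence of \eqref{mod p Wu Pontrjagin}, and extracting the coefficient of $p_3^{k-2}p_2^2$ from that formula via the unique exponent vector $(i_1,i_2,i_3,i_4)=(0,2,k-2,0)$ is exactly the intended computation. The only step you leave as ``I expect'' is the mod-$p$ simplification, which goes as follows: for $p=6k-5$ one has $6k\equiv 5$, hence $k-1\equiv -\tfrac{1}{6}\pmod p$, so your coefficient $(-1)^{k+\frac{p+1}{2}}\cdot\tfrac{k-1}{2}$ becomes $(-1)^{k+\frac{p-1}{2}}\cdot\tfrac{1}{12}$; for $p=6k-7$ one has $6k\equiv 7$, hence $k-1\equiv\tfrac{1}{6}$, so $(-1)^{k+\frac{p+1}{2}}\cdot\tfrac{3(k-1)}{2}=(-1)^{k+\frac{p+1}{2}}\cdot\tfrac{1}{4}$.
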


We finally consider the exceptional Lie groups. Let $p$ be a prime $>5$. The cohomology of $BE_8$ is given by
$$H^*(BE_8)=\Z/p[x_4,x_{16},x_{24},x_{28},x_{36},x_{40},x_{48},x_{60}],\quad|x_i|=i.$$
Let $j_2\colon Spin(15)\to E_8$ be the canonical inclusion. It is shown in \cite{HKO} that the generators $x_i$ can be chosen such that
\begin{align*}
  j_2^*(x_4)&=p_1\\
  j_2^*(x_{16})&=12p_4-\frac{18}{5}p_3p_1+p_2^2+\frac{1}{10}p_2p_1^2\\
  j_2^*(x_{24})&=60p_6-5p_5p_1-5p_4p_2+3p_3^2-p_3p_2p_1+\frac{5}{36}p_2^3\\
  j_2^*(x_{28})&=480p_7+40p_5p_2-12p_4p_3-p_3p_2^2-3p_4p_2p_1+\frac{24}{5}p_3^2p_1+\frac{11}{36}p_2^3p_1\\
  j_2^*(x_{36})&\equiv 480p_7p_2+72p_6p_3-30p_5p_4-\frac{25}{2}p_5p_2^2+9p_4p_3p_2-\frac{18}{5}p_3^3-\frac{1}{4}p_3p_2^3\mod(p_1)
\end{align*}
where $BSpin(15)\simeq BSO(15)$ since we are localizing at an odd prime. Then in particular, $x_{60}$ can be chosen such that $j_2^*(x_{60})$ does not include the monomial $ap_7p_6p_2$ for $a\in\Z/p$. Thus by Lemma \ref{P^1 SO(15)} and a degree reason, one gets:

\begin{lemma}
  \label{P^1 E_8}
  Let $p$ be a prime $>5$ and $k\ge 2$. In $H^*(BE_8)$,
  \begin{align*}
    \mathcal{P}^1x_4&\ge
    \begin{cases}
      (-1)^{k+\frac{p-1}{2}}\frac{5}{96\cdot 60^{k-1}}x_{36}x_{24}^{k-2}&(p=12k-7)\\
      (-1)^{k+\frac{p+1}{2}}\frac{1}{8\cdot 60^{k-1}}x_{28}x_{24}^{k-2}&(p=12k-11)
    \end{cases}\\
    \mathcal{P}^1x_{16}&\ge
    \begin{cases}
      (-1)^{k+\frac{p-1}{2}}\frac{35}{8\cdot 60^{k-1}}x_{36}x_{24}^{k-2}&(p=12k-13)\\
      (-1)^{k+\frac{p-1}{2}}\frac{7}{8\cdot 60^{k-1}}x_{28}x_{24}^{k-2}&(p=12k-17).
    \end{cases}
  \end{align*}
\end{lemma}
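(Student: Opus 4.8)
The plan is to push the calculation forward to $H^*(BSO(15))$ along $j_2$, where $\mathcal{P}^1$ of Pontrjagin classes is recorded in Lemma \ref{P^1 SO(15)}, and then pull the answer back to $H^*(BE_8)$ by a dimension count. Since $\mathcal{P}^1$ is natural, the formulas above for $j_2^*x_4$ and $j_2^*x_{16}$ give $j_2^*(\mathcal{P}^1x_4)=\mathcal{P}^1p_1$ and $j_2^*(\mathcal{P}^1x_{16})=\mathcal{P}^1\bigl(12p_4-\tfrac{18}{5}p_3p_1+p_2^2+\tfrac1{10}p_2p_1^2\bigr)$. Call a monomial in $p_1,\dots,p_7$ \emph{clean} if it lies in $\Z/p[p_2,p_6,p_7]$ with $p_2$-exponent at most $1$. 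In each case there is a unique clean monomial in the degree of the left-hand side: it is $p_7p_6^{k-2}p_2$ for $p=12k-7$ and $p=12k-13$, and $p_7p_6^{k-2}$ for $p=12k-11$ and $p=12k-17$.

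I would first extract the clean part of the right-hand sides. Expanding $\mathcal{P}^1\bigl(12p_4-\tfrac{18}{5}p_3p_1+p_2^2+\tfrac1{10}p_2p_1^2\bigr)$ by the Cartan formula, every summand besides $12\,\mathcal{P}^1p_4$ and $2p_2\,\mathcal{P}^1p_2$ is divisible by $p_1$ or $p_3$, hence has no clean part. Lemma \ref{P^1 SO(15)} then gives the coefficient of the clean monomial in $\mathcal{P}^1p_1$ (in the two $\mathcal{P}^1x_4$ cases $p=12k-7,\,12k-11$) or in $12\,\mathcal{P}^1p_4+2p_2\,\mathcal{P}^1p_2$ (in the two $\mathcal{P}^1x_{16}$ cases $p=12k-13,\,12k-17$) as an explicit scalar times $(-1)^{k+\frac{p-1}{2}}$. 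Now write $\mathcal{P}^1x_4=\sum_Mc_MM$ and $\mathcal{P}^1x_{16}=\sum_Mc_MM$, summed over monomials $M$ in $x_4,x_{16},x_{24},x_{28},x_{36},x_{40},x_{48},x_{60}$ of the appropriate degree, apply $j_2^*$, and compare the coefficient of the clean Pontrjagin monomial on both sides. The ``degree reason'' is that a single $M$ contributes: cleanness rules out any $M$ involving $x_4=p_1$, and any $M$ involving $x_{16}$ since every term of $j_2^*x_{16}$ is divisible by a class among $p_1,p_3,p_4$ or by $p_2^2$; the only generator whose pullback has a clean term divisible by $p_7$ is $x_{36}$, with term $480p_7p_2$ (resp.\ $x_{28}$, with term $480p_7$), the only one with a clean power of $p_6$ is $x_{24}$, with term $60p_6$, and the lone dangerous high-degree monomial $x_{60}x_{24}^{k-3}$ is killed because $x_{60}$ was chosen so that $j_2^*x_{60}$ has no $ap_7p_6p_2$. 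Hence the only contributing $M$ is $x_{36}x_{24}^{k-2}$ (resp.\ $x_{28}x_{24}^{k-2}$), with coefficient $480\cdot60^{k-2}$ of the clean monomial in $j_2^*M$; comparing with the first step determines $c_{x_{36}x_{24}^{k-2}}$ (resp.\ $c_{x_{28}x_{24}^{k-2}}$), and simplification yields the stated formulas.

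The step I expect to be the real obstacle is the degree reason, precisely the part that rules out all but the advertised monomial. The generators $x_4,\dots,x_{36}$ are controlled by the explicit pullback formulas; the work lies in discarding monomials involving $x_{40}$, $x_{48}$, or $x_{60}$, whose pullbacks are only partly known. One argues that any clean term in those pullbacks would have to avoid $p_1,p_3,p_4,p_5$ and keep $p_2$-exponent $\le1$, which forces it into a very short list, each member of which is then eliminated by a dimension count or, for $x_{60}$, by the deliberate choice made before the lemma (and, should a $p_6^2$-term of $j_2^*x_{48}$ enter for larger $k$, by an analogous normalization of $x_{48}$). Once this bookkeeping is complete, the remainder is the routine scalar arithmetic above.
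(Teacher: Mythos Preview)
Your approach is exactly the paper's: the paper's entire proof is the single sentence ``Thus by Lemma \ref{P^1 SO(15)} and a degree reason'', and your proposal is a correct unpacking of what that degree reason is---push forward along $j_2^*$, isolate the ``clean'' monomial $p_7p_6^{k-2}p_2$ or $p_7p_6^{k-2}$, and check that exactly one monomial in the $x_i$ can hit it. Your caveat about $x_{48}$ is well taken and is handled the same way the paper handles $x_{60}$: since $j_2^*(x_{24}^2)$ has leading term $3600\,p_6^2$, one may replace $x_{48}$ by $x_{48}-c\,x_{24}^2$ to kill any $p_6^2$ term in $j_2^*(x_{48})$, after which $x_{48}$ contributes no clean term and the bookkeeping closes; the paper suppresses this, but it is the same ``in particular, $x_{60}$ can be chosen'' move.
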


There is a commutative square of inclusions
\begin{equation}
  \label{E_6 E_8}
  \xymatrix{Spin(10)\ar[r]^{i_1}\ar[d]_{j_1}&Spin(15)\ar[d]^{j_2}\\
  E_6\ar[r]^{i_2}&E_8.}
\end{equation}
Let $p$ be a prime $>5$. The cohomology of $BE_6$ and $BSO(2m)$ are given by
\begin{equation}
  \label{H(BE_6)}
  H^*(BE_6)=\Z/p[x_4,x_{10},x_{12},x_{16},x_{18},x_{24}],\quad|x_i|=i.
\end{equation}
It is shown in \cite{HKO} that $x_{10}$ and $x_{18}$ are chosen as
$$j_1^*(x_{10})=e_5,\quad j_1^*(x_{12})=-6p_3+p_2p_1,\quad j_1^*(x_{18})=p_2e_5$$
and that $i_2^*(x_i)=x_i$ for $i=4,12,16,24$.
Then by the choice of the generators $x_i$ of $H^*(BE_8)$ and the commutative diagram \eqref{E_6 E_8}, $$i_2^*(x_{28})=40x_{18}x_{10}+\frac{1}{6}x_{16}x_{12},\quad i_2^*(x_{36})=-10x_{18}^2-\frac{5}{2}x_{16}x_{10}^2.$$
Thus by Lemma \ref{P^1 E_8} one finally obtains the following.

\begin{corollary}
  \label{P^1 E_6}
  Let $p$ be a prime $>5$ and $k\ge 2$. In $H^*(BE_6)$,
  \begin{align*}
    \mathcal{P}^1x_4&\ge
    \begin{cases}
      (-1)^{k+\frac{p+1}{2}}\frac{25}{48\cdot 60^{k-1}}x_{24}^{k-2}x_{18}^2&(p=12k-7)\\
      (-1)^{k+\frac{p+1}{2}}\frac{5}{60^{k-1}}x_{24}^{k-2}x_{18}x_{10}&(p=12k-11)
    \end{cases}\\
    \mathcal{P}^1x_{16}&\ge
    \begin{cases}
      (-1)^{k+\frac{p+1}{2}}\frac{175}{4\cdot 60^{k-1}}x_{24}^{k-2}x_{18}^2&(p=12k-13)\\
      (-1)^{k+\frac{p-1}{2}}\frac{35}{60^{k-1}}x_{24}^{k-2}x_{18}x_{10}&(p=12k-17).
    \end{cases}
  \end{align*}
\end{corollary}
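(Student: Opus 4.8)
The plan is to deduce Corollary \ref{P^1 E_6} from Lemma \ref{P^1 E_8} by the naturality of $\mathcal{P}^1$ along the inclusion $i_2\colon E_6\to E_8$. Since $i_2^*$ is a ring homomorphism commuting with $\mathcal{P}^1$ and $i_2^*(x_4)=x_4$, $i_2^*(x_{16})=x_{16}$, $i_2^*(x_{24})=x_{24}$, we have $\mathcal{P}^1x_4=i_2^*(\mathcal{P}^1x_4)$ and $\mathcal{P}^1x_{16}=i_2^*(\mathcal{P}^1x_{16})$ in $H^*(BE_6)$, with the right-hand sides computed in $H^*(BE_8)$. First I would apply $i_2^*$ termwise to each of the four identities of Lemma \ref{P^1 E_8}, leaving the factor $x_{24}^{k-2}$ unchanged and replacing the factor $x_{36}$ by $i_2^*(x_{36})=-10x_{18}^2-\tfrac52 x_{16}x_{10}^2$ and the factor $x_{28}$ by $i_2^*(x_{28})=40x_{18}x_{10}+\tfrac16 x_{16}x_{12}$, according to which case occurs.

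Next I would extract the coefficient of $x_{24}^{k-2}x_{18}^2$ (in the cases $p=12k-7$ and $p=12k-13$) or of $x_{24}^{k-2}x_{18}x_{10}$ (in the cases $p=12k-11$ and $p=12k-17$). In the first two cases this multiplies the coefficient of Lemma \ref{P^1 E_8} by $-10$, turning $(-1)^{k+\frac{p-1}{2}}$ into $(-1)^{k+\frac{p+1}{2}}$ and giving, after cancellation, $\tfrac{25}{48\cdot60^{k-1}}$ and $\tfrac{175}{4\cdot60^{k-1}}$; in the last two cases it multiplies by $40$, keeping the sign and giving $\tfrac{5}{60^{k-1}}$ and $\tfrac{35}{60^{k-1}}$. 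This is just sign-tracking and clearing denominators.

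The only non-formal point, and the step I expect to be the main obstacle, is that Lemma \ref{P^1 E_8} asserts only that $\mathcal{P}^1x_4$ and $\mathcal{P}^1x_{16}$ in $H^*(BE_8)$ \emph{contain} the listed monomials: one must check that no further term contributes, under $i_2^*$, to the monomial being extracted in $H^*(BE_6)$. I would settle this by the degree bookkeeping already used to prove Lemma \ref{P^1 E_8}: the classes $x_{10}$ and $x_{18}$ occur in the images $i_2^*(x_{28})$ and $i_2^*(x_{36})$, the generators of $H^*(BE_8)$ from \cite{HKO} are chosen so that $i_2^*(x_{40})$, $i_2^*(x_{48})$, $i_2^*(x_{60})$ are controlled, and a comparison of degrees (the monomial in question lies in degree $2p+2$ when the operand is $x_4$ and in degree $2p+14$ when it is $x_{16}$) shows that among products of generators of $H^*(BE_8)$ of that degree, only $x_{36}x_{24}^{k-2}$ (resp. $x_{28}x_{24}^{k-2}$) has $i_2^*$-image divisible by $x_{24}^{k-2}x_{18}^2$ (resp. $x_{24}^{k-2}x_{18}x_{10}$). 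Granting this, the coefficient computed in the second step is the full coefficient, which is exactly the corollary.
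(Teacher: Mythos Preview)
Your approach is exactly the paper's: after computing $i_2^*(x_{28})$ and $i_2^*(x_{36})$, the paper simply says ``Thus by Lemma \ref{P^1 E_8} one finally obtains'' the corollary, and your computation (multiply by $-10$ or $40$, track signs, simplify) is precisely what this one-liner unpacks to. Your arithmetic is correct in all four cases, and you are right to flag that, because Lemma \ref{P^1 E_8} only records one monomial, one must in principle check that no other monomial of $\mathcal{P}^1x_4$ or $\mathcal{P}^1x_{16}$ in $H^*(BE_8)$ contributes under $i_2^*$ to the target monomial in $H^*(BE_6)$; the paper leaves this implicit, and your degree-bookkeeping sketch is the natural way to close it.
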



\section{Proof of the main theorem}

Let $\langle a,b\rangle$ denote the Samelson product of $a,b\in\pi_*(X)$ for an H-group $X$. We will need the following lemma.

\begin{lemma}
  \label{Samelson product}
  Let $f\colon X\to Y$ be a map between H-groups. Suppose there are $a,b\in\pi_*(X)$ such that $f_*(a)=0$ and $f_*(\langle a,b\rangle)\ne 0$. Then $f$ is not an H-map.
\end{lemma}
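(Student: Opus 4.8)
The plan is to use the standard obstruction‐theoretic characterization of H-maps via the difference element, exactly the way Stasheff's $A_2$-obstruction works. If $f\colon X\to Y$ is an H-map between H-groups, then by definition the two composites $X\times X\to Y$, namely $\mu_Y\circ(f\times f)$ and $f\circ\mu_X$, are homotopic. Restricting to the subspace $X\vee X\subset X\times X$ both composites agree (with $f$ on each wedge summand), so a homotopy between them on $X\times X$ restricts to a based homotopy on $X\times X/X\vee X=X\wedge X$. Hence the single obstruction to $f$ being an H-map is a class in $[X\wedge X,Y]$, and this class is precisely the one represented by the map $X\wedge X\to Y$ measuring the failure of $f$ to commute with the multiplications. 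I would first recall that this obstruction class, under adjunction, is detected on homotopy groups by the Samelson products: for $a\in\pi_r(X)$ and $b\in\pi_s(X)$, the composite $S^{r+s}\xrightarrow{a\wedge b}X\wedge X\to Y$ equals $\langle f\circ a,f\circ b\rangle_Y - f_*\langle a,b\rangle_X$ up to sign (this is the bilinearity/naturality formula for Samelson products, and is exactly the identity invoked at the start of the proof of Proposition 2.4).

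With that formula in hand the lemma is immediate. Suppose $f$ \emph{were} an H-map. Then the obstruction map $X\wedge X\to Y$ is null-homotopic, so in particular its composite with $a\wedge b\colon S^{r+s}\to X\wedge X$ is null, i.e.
$$\langle f_*(a),f_*(b)\rangle_Y = f_*\langle a,b\rangle_X$$
in $\pi_{r+s}(Y)$. But $f_*(a)=0$ by hypothesis, and the Samelson product is trivial as soon as one of its arguments is zero (it is bilinear, or more elementarily $\langle 0,\beta\rangle=0$ for any $\beta$). Hence the left-hand side vanishes, forcing $f_*\langle a,b\rangle_X=0$, which contradicts the assumption $f_*\langle a,b\rangle\ne 0$. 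Therefore $f$ is not an H-map.

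The only point requiring care — and the one I would spend a sentence or two pinning down precisely — is the sign/naturality identity $\langle f_*a,f_*b\rangle = f_*\langle a,b\rangle +$ (obstruction term). This is where "H-group" rather than merely "H-space" is used: inverses make the Samelson product well defined and bilinear, so the obstruction class really does live in $[X\wedge X,Y]$ and really is detected by these differences. One should also note $X\wedge X$ here need not be simply connected, but since the conclusion only uses a single homotopy class of a map out of a sphere $S^{r+s}$, no connectivity or CW hypotheses beyond what an H-group already supplies are needed. I do not expect any genuine obstacle; the lemma is a clean packaging of the well-known fact that Samelson products obstruct H-maps, tailored to the contrapositive form in which it will be applied to the projections $G_{(p)}\to H_{(p)}$.
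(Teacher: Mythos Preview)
Your proposal is correct and follows the same logical route as the paper: assume $f$ is an H-map, invoke naturality of Samelson products to get $f_*\langle a,b\rangle=\langle f_*(a),f_*(b)\rangle=\langle 0,f_*(b)\rangle=0$, contradicting the hypothesis. The paper's proof is literally those two lines; your obstruction-theoretic setup with the difference class in $[X\wedge X,Y]$ is just an explicit justification of the naturality statement that the paper takes as known, so while more detailed, it is not a different approach.
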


\begin{proof}
  If $f$ is an H-map then by the naturality of Samelson products
  $$0\ne f_*(\langle a,b\rangle)=\langle f_*(a),f_*(b)\rangle=0,$$
  which is a contradiction. Thus $f$ is not an H-map.
\end{proof}

All Samelson products that we need to apply Lemma \ref{Samelson product} are calculated in \cite{B,HKMO} except for the case $(G,H)=(Spin(8),G_2)$ and $p=2$. Then we calculate a certain Samelson product in $Spin(8)$ for $p=2$. Recall that the fibration $Spin(7)\to Spin(8)\to S^7$ is trivial such that $Spin(8)=Spin(7)\times S^7$. Let $\partial\colon\pi_{*+1}(S^8)\to\pi_*(Spin(8))$ be the connecting map of the homotopy exact sequence of a fibration $Spin(8)\to Spin(9)\to S^8$. We wll freely use the notation of the homotopy groups of spheres in \cite{To}.

\begin{lemma}
  \label{Samelson 1}
  Let $p=2$ and $[\iota_7]\colon S^7\to Spin(7)\times S^7=Spin(8)$ be the inclusion. Then
  $$\langle[\iota_7],[\iota_7]\rangle=\pm\partial(2\sigma_8-\Sigma\sigma').$$
\end{lemma}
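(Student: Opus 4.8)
The plan is to compute $\langle[\iota_7],[\iota_7]\rangle\in\pi_{14}(Spin(8))_{(2)}$ by relating it to a known Samelson square in $Spin(9)$ via the fibration $Spin(8)\to Spin(9)\to S^8$. First I would recall that $[\iota_7]\colon S^7\to Spin(8)$ is the composite of the unit sphere inclusion $S^7\hookrightarrow Spin(7)$ (which represents the standard generator coming from the $Spin(7)$-factor, up to the trivialization $Spin(8)\simeq Spin(7)\times S^7$) with the inclusion into the product; equivalently, it is the image under $\partial$ of a suitable element or it is detected in $H_7$. The Samelson product $\langle[\iota_7],[\iota_7]\rangle$ is natural, so I would push forward along $Spin(8)\to Spin(9)$ and use the fact that the generator in degree $7$ maps to an element of $\pi_7(Spin(9))$ that is known; since $\pi_7(Spin(9))$ is finite and the relevant Samelson square in $Spin(9)$ is already recorded in the literature (e.g.\ \cite{B} or \cite{HKMO}), this pins down $\langle[\iota_7],[\iota_7]\rangle$ modulo the kernel of $(Spin(8)\to Spin(9))_*$ in degree $14$, which is the image of $\partial\colon\pi_{15}(S^8)\to\pi_{14}(Spin(8))$.

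Next I would argue that $\langle[\iota_7],[\iota_7]\rangle$ actually lies in that image of $\partial$. The point is that under the projection $Spin(8)=Spin(7)\times S^7\to Spin(7)$ the class $[\iota_7]$ dies (it is the inclusion of the $S^7$-factor), so $\langle[\iota_7],[\iota_7]\rangle$ maps to $\langle 0,0\rangle=0$ in $\pi_{14}(Spin(7))$; hence $\langle[\iota_7],[\iota_7]\rangle$ comes from $\pi_{14}(S^7)$ via the section $S^7\hookrightarrow Spin(8)$, but a Samelson product of two classes each pulled back from the same $H$-space factor that is an honest sphere must vanish on the sphere factor too, forcing it into the image of $\partial$. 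More carefully, I would use the EHP-type or Blakers--Massey analysis of the fibration $Spin(8)\to Spin(9)\to S^8$: the composite $S^7\xrightarrow{[\iota_7]}Spin(8)\to Spin(9)$ is a generator that bounds the Whitehead square $[\iota_8,\iota_8]$ under the connecting map, i.e.\ $[\iota_7]$ corresponds to $\partial$ applied to a lift of $[\iota_8,\iota_8]$, and the Samelson square of this class is computed by the classical relation $\partial\langle\text{(EHP data)}\rangle$, which over $\Z_{(2)}$ reads $\partial(2\sigma_8-\Sigma\sigma')$ up to sign; here $2\sigma_8-\Sigma\sigma'$ is exactly $[\iota_8,\iota_8]\in\pi_{15}(S^8)$ in Toda's notation. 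So the identification reduces to the standard fact $[\iota_8,\iota_8]=\pm(2\sigma_8-\Sigma\sigma')$ from \cite{To}, combined with the compatibility between Samelson products in $Spin(8)$ and Whitehead products in $S^8$ under $\partial$.

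The key steps, in order: (1) identify $[\iota_7]$ with the appropriate $\partial$-image coming from $\pi_8(S^8)$ (i.e.\ $[\iota_7]=\pm\partial(\iota_8)$ up to the splitting, or rather the generator of $\pi_7$ lifting the characteristic element of the $Spin(8)\to Spin(9)\to S^8$ fibration); (2) invoke the general formula relating the Samelson product $\langle\partial\alpha,\partial\beta\rangle$ in the fiber to $\partial$ of a Whitehead product in the base, for the principal-type fibration in question; (3) specialize to $\alpha=\beta=\iota_8$, obtaining $\langle[\iota_7],[\iota_7]\rangle=\pm\partial[\iota_8,\iota_8]$; (4) substitute $[\iota_8,\iota_8]=2\sigma_8-\Sigma\sigma'$ from Toda to conclude. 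I expect the main obstacle to be step (2)--(3): making the "Samelson product of boundary classes equals boundary of Whitehead product" principle precise for $Spin(8)\to Spin(9)\to S^8$, since this fibration is not a loop fibration and one must be careful that the relevant secondary operation or the EHP sequence contributes no extra terms in degree $14$; verifying that $\pi_{14}(Spin(7))_{(2)}$ and the other potential error terms vanish (or are killed by $\partial$) is where the actual work lies, and I would handle it by a direct check of the $2$-primary homotopy groups of $Spin(7)$, $Spin(8)$, $Spin(9)$ and $S^7,S^8$ in the relevant range using \cite{To}.
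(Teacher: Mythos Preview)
Your ``key steps'' (1)--(4) are exactly the paper's proof, so the approach is correct and the same. The paper runs the argument in four lines: a commutative diagram of fibrations with bottom row the Hopf fibration $S^7\to S^{15}\to S^8$ identifies $\partial(\iota_8)=[\iota_7]$; then the identity $\partial([\iota_8,\iota_8])=\langle[\iota_7],[\iota_7]\rangle$ is invoked as ``the adjointness of Whitehead products and Samelson products''; finally Toda's $[\iota_8,\iota_8]=\pm(2\sigma_8-\Sigma\sigma')$ finishes it.

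Two remarks. First, your opening sentence misidentifies $[\iota_7]$ as coming from the $Spin(7)$ factor; by definition (and as you correctly use later) it is the inclusion of the $S^7$ factor in $Spin(8)=Spin(7)\times S^7$. Second, your worry about step (2)--(3) is unnecessary: the fibration $Spin(8)\to Spin(9)\to S^8$ is a principal $Spin(8)$-bundle, so its connecting map factors as $\pi_{*+1}(S^8)\xrightarrow{c_*}\pi_{*+1}(BSpin(8))\cong\pi_*(Spin(8))$ for the classifying map $c$, and the standard adjointness of Whitehead and Samelson products (for the path--loop fibration over $BSpin(8)$) then gives $\partial[\alpha,\beta]=\langle\partial\alpha,\partial\beta\rangle$ on the nose, with no error terms to chase. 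All the detours in your first two paragraphs (pushing to $Spin(9)$, projecting to $Spin(7)$, checking $\pi_{14}$ of various groups) can be dropped.
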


\begin{proof}
  Consider a commutative diagram with fibration rows and columns
  $$\xymatrix{Spin(7)\ar[d]\ar@{=}[r]&Spin(7)\ar[d]\ar[r]&\ast\ar[d]\\
  Spin(8)\ar[r]\ar[d]&Spin(9)\ar[r]\ar[d]&S^8\ar@{=}[d]\\
  S^7\ar[r]&S^{15}\ar[r]&S^8.}$$
  Then in the homotopy exact sequence of the middle row
  $$\cdots\to\pi_{*+1}(S^8)\xrightarrow{\partial}\pi_*(Spin(8))\to\pi_*(Spin(9))\to\pi_*(S^8)\to\cdots$$
  one has $\partial(\iota_8)=[\iota_7]$, where $\iota_8$ is the identity map of $S^8$. Thus by the adjointness of Whitehead products and Samelson products, $\partial([\iota_8,\iota_8])=\langle[\iota_7],[\iota_7]\rangle$. On the other hand, it is shown in \cite[p. 50]{To} that $\pi_{15}(S^8)=\Z\{\sigma_8\}\oplus\Z/8\{\Sigma\sigma'\}$ and $[\iota_8,\iota_8]=\pm(2\sigma_8-\Sigma\sigma')$, where for a cyclic group $A$, $A\{x\}$ is a cyclic group generated by $x$ which is isomorphic to $A$. Thus one gets the desired equality.
\end{proof}

There is a commutative diagram of inclusions
$$\xymatrix{SU(3)\ar[rr]^i\ar@{=}[d]&&G_2\ar[d]^j\\
SU(3)\ar[r]^{i'}&Spin(6)\ar[r]^{j'}&Spin(7)\ar[r]^{j''}&Spin(8).}$$
It is shown in \cite{KM} that
$$\pi_{14}(Spin(7))=\Z/8\{[\bar{\nu}_6+\epsilon_6]\}\oplus\Z/2\{[\nu_5]_7\nu_8^2\}\oplus\Z/8\{[\eta_5\epsilon_6]_7\}.$$
Then since $Spin(8)=Spin(7)\times S^7$,
$$\pi_{14}(Spin(8))=\Z/8\{j''_*([\bar{\nu}_6+\epsilon_6])\}\oplus\Z/2\{j''_*([\nu_5]_7\nu_8^2)\}\oplus\Z/8\{j''_*([\eta_5\epsilon_6]_7)\}\oplus\Z/8\{[\iota_7]\sigma'\},$$
where $\pi_{14}(S^7)=\Z/8\{\sigma'\}$. The following is proved in \cite{KM}.

\begin{lemma}
  \label{eq 1}
  For an odd integer $u_1$ and an integer $u_2$,
  \begin{align*}
    \partial(\sigma_8)&\equiv [\iota_7]\sigma'+u_1j''_*([\bar{\nu}_6+\epsilon_6])+u_2j''_*([\eta_5\epsilon_6]_7)&&\mod j''_*([\nu_5]_7\nu_8^2)\\
    \partial(\Sigma\sigma')&\equiv 2[\iota_7]\sigma'+4j''_*([\bar{\nu}_6+\epsilon_6])-j''_*([\eta_5\epsilon_6]_7)&&\mod j''_*([\nu_5]_7\nu_8^2).
  \end{align*}
\end{lemma}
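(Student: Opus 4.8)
The statement is imported from \cite{KM}, so the natural ``proof'' here is a reference; but the underlying argument would proceed as follows. One works inside the homotopy exact sequence of the fibration $Spin(8)\to Spin(9)\xrightarrow{p}S^8$, using $\partial(\iota_8)=[\iota_7]$ as established in the proof of Lemma \ref{Samelson 1}. Since $\partial$ is a homomorphism and $\pi_{15}(S^8)$ is generated by $\sigma_8$ and $\Sigma\sigma'$, it suffices to evaluate $\partial$ on these two classes and then to rewrite the answers in the basis of $\pi_{14}(Spin(8))=j''_*\pi_{14}(Spin(7))\oplus\Z/8\{[\iota_7]\sigma'\}$ coming from the splitting $Spin(8)=Spin(7)\times S^7$. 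Note first that $\sigma_8$ has infinite order whereas $\pi_{14}(Spin(8))$ is finite, so $\partial(\sigma_8)$ is automatically torsion; the whole content of the lemma is the precise identification of this torsion element, and likewise for $\Sigma\sigma'$.

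First I would treat $\Sigma\sigma'$ using the behaviour of $\partial$ on suspensions: realizing $p$ as a pullback of the universal $Spin(8)$-bundle, naturality of the connecting homomorphism together with $\partial(\iota_8)=[\iota_7]$ pins down $\partial(\Sigma\sigma')$ in terms of $[\iota_7]\circ\sigma'$, with the remaining correction terms lying in $j''_*\pi_{14}(Spin(7))$ and evaluated from the explicit structure of $\pi_{14}(Spin(7))$ recorded above. For $\sigma_8$ I would bring in the second fibration through $Spin(9)$, the spinor fibration $Spin(7)\to Spin(9)\xrightarrow{q}S^{15}$, together with the octonionic Hopf fibration $S^7\to S^{15}\xrightarrow{\sigma}S^8$ and the factorization $p=\sigma\circ q$ from the commutative diagram in the proof of Lemma \ref{Samelson 1}; since $\sigma$ agrees with $\sigma_8$ up to a torsion adjustment, naturality of connecting maps along this diagram rewrites $\partial(\sigma_8)$ in terms of the characteristic element $\partial_q(\iota_{15})\in\pi_{14}(Spin(7))$ of the spinor fibration and the homomorphism $\pi_{14}(Spin(7))\to\pi_{14}(Spin(8))$. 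Alternatively, once $\partial(\Sigma\sigma')$ is in hand, one can use the relations $[\iota_8,\iota_8]=\pm(2\sigma_8-\Sigma\sigma')$ and $\partial([\iota_8,\iota_8])=\langle[\iota_7],[\iota_7]\rangle$ already appearing around Lemma \ref{Samelson 1}, together with a direct analysis of $p_*$ on $\pi_{15}(Spin(9))$ and $\pi_{14}(Spin(9))$, to cut the remaining ambiguity in $\partial(\sigma_8)$ down to that of a single Samelson square.

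The main obstacle is precisely this last identification of the torsion part of $\partial(\sigma_8)$: it requires the full computations of $\pi_{14}(Spin(7))$, $\pi_{14}(Spin(9))$ and $\pi_{15}(Spin(9))$ carried out in \cite{KM}, and, just as importantly, careful bookkeeping of which of the triality-related copies of $Spin(7)$ and $S^7$ inside $Spin(8)$ (and inside $Spin(9)$) one is using at each step, since the four generators of $\pi_{14}(Spin(8))$ are adapted to one particular splitting. It is this triality bookkeeping, rather than any deep obstruction, that prevents the coefficients $u_1$ (odd) and $u_2$ from being fixed outright and that makes it natural to state the congruences modulo the order-two summand generated by $j''_*([\nu_5]_7\nu_8^2)$.
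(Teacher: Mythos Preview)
Your identification is correct: the paper does not prove this lemma at all but simply states ``The following is proved in \cite{KM}'' and moves on, so your opening sentence already matches the paper's treatment exactly. The subsequent sketch of how one would recover the result from the fibrations $Spin(8)\to Spin(9)\to S^8$ and $Spin(7)\to Spin(9)\to S^{15}$ is extra commentary not present in the paper; it is a reasonable outline of the kind of argument carried out in \cite{KM}, but for the purposes of this paper the citation alone is the proof.
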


To proceed the calculation, we relate the above generators of $\pi_{14}(Spin(8))$ with $G_2$ in $Spin(8)$. As in \cite{Mi1},
$$\pi_{14}(G_2)=\Z/8\{\langle\bar{\nu}_6+\epsilon_6\rangle\}\oplus\Z/2\{i_*([\nu_5^2]\nu_{11})\}.$$
Since $\pi_6(G_2)=0$ as in \cite{Mi1}, the fibration $G_2\xrightarrow{i}Spin(7)\to S^7$ splits such that $Spin(7)\simeq G_2\times S^7$. Then
$$\pi_{14}(Spin(7))=\Z/8\{j_*(\langle\bar{\nu}_6+\epsilon_6\rangle)\}\oplus\Z/2\{j_*(i_*([\nu_5^2]\nu_{11}))\}\oplus\Z/8\{[\iota_7']\sigma'\}.$$
where $[\iota_7']\colon S^7\to G_2\times S^7\simeq Spin(7)$ is the inclusion. In particular, for some integers $s_1,s_2,s_3$,
\begin{equation}
  \label{eq 2}
  [\eta_5\epsilon_6]_7=s_1j_*(\langle\bar{\nu}_6+\epsilon_6\rangle)+s_2j_*(i_*([\nu_5^2]\nu_{11}))+s_3[\iota_7']\sigma'.
\end{equation}

\begin{lemma}
  \label{eq 3}
  We may choose $[\nu_5]_7\in\pi_8(Spin(7))$ and $[\nu_5^2]\in\pi_{11}(SU(3))$ such that
  $$[\nu_5]_7\nu_8^2=j_*(i_*([\nu_5^2]\nu_{11})).$$
\end{lemma}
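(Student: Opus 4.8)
The plan is to track the class $[\nu_5]_7\in\pi_8(Spin(7))$ through the inclusions $SU(3)\xrightarrow{i}G_2\xrightarrow{j}Spin(7)$ and show that, up to a choice of lift, it is the image of a class $[\nu_5^2]$ on the $SU(3)$ level, at which point the product with $\nu_8^2$ is forced. First I would recall what is known about the relevant homotopy groups in low degrees: $\pi_8(SU(3))$, $\pi_8(G_2)$, and $\pi_8(Spin(7))$, together with the images of $\nu_5\in\pi_8(S^5)$ under the various canonical maps $S^5\to SU(3)$, $S^5\to G_2$, $S^5\to Spin(7)$ arising from the cell structures (the $5$-skeleton of $SU(3)$ is $S^5$, and $SU(3)\subset G_2\subset Spin(7)$ induce isomorphisms on $\pi_5$ in the relevant range since localization is at $p=2$ here but the statement is about the specific Toda classes). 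The notation $[\nu_5]_7$ and $[\nu_5^2]$ refers to specific lifts of iterated composites to $Spin(7)$ and $SU(3)$ respectively, following the conventions of \cite{KM,Mi1,To}, so the first real step is to pin down that these lifts can be chosen compatibly.

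Next I would exploit the splitting $Spin(7)\simeq G_2\times S^7$ coming from $\pi_6(G_2)=0$, which reduces the identification of $j_*$ on $\pi_{14}$ to understanding the $G_2$-component; in particular $j_*(i_*([\nu_5^2]\nu_{11}))$ lands in the $G_2$-summand, and I need to check that $[\nu_5]_7\nu_8^2$ also lies in that summand, i.e.\ that its $[\iota_7']\sigma'$-component vanishes. This is a projection calculation: composing with the projection $Spin(7)\simeq G_2\times S^7\to S^7$, the class $[\nu_5]_7$ maps into $\pi_8(S^7)=\Z/2\{\eta_7\}$, and one needs the composite $S^8\xrightarrow{[\nu_5]_7}Spin(7)\to S^7$ to be null (or at least that after further composing with $\nu_8^2$ the result is null in $\pi_{14}(S^7)$). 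Then, within the $G_2$-factor, I would use that $\pi_8(G_2)$ is generated (in the appropriate $2$-local sense) by the image of $\nu_5$ from $SU(3)$ — this is where \cite{Mi1}'s computation of $\pi_*(G_2)$ enters — so that the lift $[\nu_5]_7$ can be taken to factor as $S^8\to SU(3)\xrightarrow{i}G_2\xrightarrow{j}Spin(7)$, and then multiplying by $\nu_8^2$ (using that the $H$-structures are compatible with the inclusions, so the Samelson-type product $[\nu_5]_7\nu_8^2$ is natural) gives exactly $j_*(i_*([\nu_5^2]\nu_{11}))$ once $[\nu_5^2]$ is defined as the corresponding product lift in $SU(3)$.

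The main obstacle I expect is the bookkeeping of which \emph{specific} representative is meant by each bracketed symbol: these are not canonical classes but lifts chosen mod indeterminacy, and the phrase ``we may choose'' in the statement signals that the content is precisely that the indeterminacies on the $SU(3)$, $G_2$, and $Spin(7)$ levels are compatible enough that a simultaneous choice works. So the crux is a careful comparison of the three relevant exact sequences (for instance the homotopy exact sequences of $SU(3)\to SU(4)\to S^7$, of $G_2\to Spin(7)\to S^7$, and the fibration used to define $[\nu_5]_7$) and checking that the ambiguity in each lift maps correctly, in particular that no ambiguity term, after multiplying by $\nu_8^2\in\pi_{14}(S^8)$, contributes a nonzero class outside the $\Z/2\{j_*(i_*([\nu_5^2]\nu_{11}))\}$ summand. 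Once that compatibility is established the identity $[\nu_5]_7\nu_8^2=j_*(i_*([\nu_5^2]\nu_{11}))$ follows by naturality of the composition product along $i$ and $j$.
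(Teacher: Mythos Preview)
Your strategy routes the class through the splitting $Spin(7)\simeq G_2\times S^7$ and then tries to descend to $SU(3)$ inside $G_2$. The paper bypasses $G_2$ entirely and instead uses the other factorisation of $j\circ i$ displayed just above the lemma, namely $j\circ i=j'\circ i'$ with $i'\colon SU(3)\to Spin(6)$ and $j'\colon Spin(6)\to Spin(7)$. The point is that $[\nu_5]_7$ is \emph{defined} in \cite{KM} as $j'_*(x)$ for an $x\in\pi_8(Spin(6))$ projecting to $\nu_5$ under $Spin(6)\to S^5$; so the class already lives on the $Spin(6)$ level before one ever sees $G_2$. Since $Spin(6)=SU(4)$ and $Spin(6)/SU(3)=S^7$ with $\pi_{11}(S^7)=\pi_{12}(S^7)=0$, the inclusion $i'_*\colon\pi_{11}(SU(3))\to\pi_{11}(Spin(6))$ is an isomorphism, and one simply sets $[\nu_5^2]=(i'_*)^{-1}(x\nu_8)$. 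The identity then drops out in one line: $j_*i_*([\nu_5^2]\nu_{11})=j'_*i'_*([\nu_5^2]\nu_{11})=j'_*(x\nu_8^2)=[\nu_5]_7\nu_8^2$.

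Your route through $G_2$ is not unreasonable, but it creates work that the paper avoids. You would need to verify that the $S^7$-component of $[\nu_5]_7\nu_8^2$ vanishes (you note this, and indeed $\eta_7\nu_8^2=0$), and then that the resulting $G_2$-component lies in the image of $i_*$ from $SU(3)$; the latter is where your sketch is thinnest, since surjectivity of $i_*$ on the relevant $\pi_8$ or $\pi_{14}$ of $G_2$ is not automatic from the fibration $SU(3)\to G_2\to S^6$ and would itself require input from \cite{Mi1}. The paper's trick of passing through $Spin(6)=SU(4)$ replaces all of that by a single vanishing statement $\pi_{11}(S^7)=\pi_{12}(S^7)=0$, which is why it is so short.
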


\begin{proof}
  Consider a commutative diagram with fibration rows
  $$\xymatrix{SU(2)\ar[r]\ar[d]&SU(3)\ar[r]^{\bar{\pi}}\ar[d]^{i'}&S^5\ar@{=}[d]\\
  Spin(5)\ar[r]&Spin(6)\ar[r]^{\pi'}&S^5.}$$
  As in \cite{KM}, there is $x\in\pi_8(Spin(6))$ such that $\pi'_*(x)=\nu_5$, and $[\nu_5]_7\in\pi_8(Spin(7))$ is chosen as $[\nu_5]_7=j'_*(x)$, where $\pi_{n+3}(S^n)=\Z/8\{\nu_n\}$ for $n\ge 5$. On the other hand, in \cite{Mi1}, $[\nu_5^2]\in\pi_{11}(SU(3))$ is chosen to be any $y\in\pi_{11}(SU(3))$ satisfying $\bar{\pi}_*(y)=\nu_5^2$. Since $Spin(6)/SU(3)=S^7$ and $\pi_k(S^7)=0$ for $k=11,12$, $i'$ is an isomorphism in $\pi_{11}$. Then since $\pi'_*(x\nu_8)=\nu_5^2$, we may put $[\nu_5^2]=(i'_*)^{-1}(x\nu_8)$. Thus
  $$j_*(i_*([\nu_5^2]\nu_{11}))=j'_*(x\nu_8^2)=[\nu_5]_7\nu_8^2$$
  as desired.
\end{proof}

\begin{lemma}
  \label{Samelson product Spin(8)}
  Let $q\colon Spin(8)\to G_2$ be the projection
  $$Spin(8)=Spin(7)\times S^7\simeq G_2\times S^7\times S^7\to G_2.$$
  Then $q_*(\langle[\iota_7],[\iota_7]\rangle)\ne 0$.
\end{lemma}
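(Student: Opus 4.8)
The plan is to start from Lemma \ref{Samelson 1}, which gives $\langle[\iota_7],[\iota_7]\rangle=\pm\partial(2\sigma_8-\Sigma\sigma')$ in $\pi_{14}(Spin(8))$, and then push this through the projection $q\colon Spin(8)\to G_2$. First I would combine Lemma \ref{eq 1} with this formula: expanding $\partial(2\sigma_8)-\partial(\Sigma\sigma')$ using the two congruences of Lemma \ref{eq 1}, the $[\iota_7]\sigma'$ terms contribute $2-2=0$, the $j''_*([\bar\nu_6+\epsilon_6])$ terms contribute $2u_1-4$, and the $j''_*([\eta_5\epsilon_6]_7)$ terms contribute $2u_2+1$, all modulo $j''_*([\nu_5]_7\nu_8^2)$. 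Since $u_1$ is odd, $2u_1-4\equiv 2\pmod 8$, so the $j''_*([\bar\nu_6+\epsilon_6])$ coefficient is a unit times $2$, hence nonzero in $\Z/8$; and $2u_2+1$ is odd, so the $j''_*([\eta_5\epsilon_6]_7)$ coefficient is also nonzero. Thus $\langle[\iota_7],[\iota_7]\rangle$ has a nonzero component along $j''_*([\bar\nu_6+\epsilon_6])$ (and along $j''_*([\eta_5\epsilon_6]_7)$), with possibly an unknown contribution along $j''_*([\nu_5]_7\nu_8^2)$.

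Next I would identify $q_*$ on the relevant generators. Under the splitting $Spin(8)=Spin(7)\times S^7\simeq G_2\times S^7\times S^7$, the projection $q$ kills the two $S^7$ factors and restricts on $Spin(7)\simeq G_2\times S^7$ to the projection onto $G_2$; equivalently $q\circ j''$ is the projection $Spin(7)\to G_2$ which is a left homotopy inverse of $i\colon G_2\to Spin(7)$. So $q_*([\iota_7]\sigma')=0$ (it lives in an $S^7$ factor), $q_*(j''_*(\langle\bar\nu_6+\epsilon_6\rangle$-image$))$: more precisely, since $\pi_{14}(Spin(7))=\Z/8\{j_*(\langle\bar\nu_6+\epsilon_6\rangle)\}\oplus\Z/2\{j_*(i_*([\nu_5^2]\nu_{11}))\}\oplus\Z/8\{[\iota_7']\sigma'\}$ and the projection $Spin(7)\to G_2$ sends $j_*(\cdot)\mapsto(\cdot)$ and kills $[\iota_7']\sigma'$, I can compute $q_*$ of each of the three generators $j''_*([\bar\nu_6+\epsilon_6])$, $j''_*([\nu_5]_7\nu_8^2)$, $j''_*([\eta_5\epsilon_6]_7)$ of $\pi_{14}(Spin(8))$ by first expressing each in terms of the $Spin(7)$-basis and then applying the projection. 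Using Lemma \ref{eq 3}, $j''_*([\nu_5]_7\nu_8^2)=j''_*j_*(i_*([\nu_5^2]\nu_{11}))$, which maps to $i_*([\nu_5^2]\nu_{11})\in\pi_{14}(G_2)$, a generator of the $\Z/2$-summand. And $j''_*([\eta_5\epsilon_6]_7)$: by \eqref{eq 2} this is $s_1 j''_*j_*(\langle\bar\nu_6+\epsilon_6\rangle)+s_2 j''_*j_*(i_*([\nu_5^2]\nu_{11}))+s_3[\iota_7']\sigma'$-type term, so $q_*(j''_*([\eta_5\epsilon_6]_7))=s_1\langle\bar\nu_6+\epsilon_6\rangle+s_2 i_*([\nu_5^2]\nu_{11})$.

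Then I would assemble: $q_*(\langle[\iota_7],[\iota_7]\rangle)=\pm\big(2q_*(\partial\sigma_8)-q_*(\partial(\Sigma\sigma'))\big)$, and using the computations above this is, modulo the $\Z/2$-summand generated by $i_*([\nu_5^2]\nu_{11})$, equal to $\big((2u_1-4)+(2u_2+1)s_1\big)\langle\bar\nu_6+\epsilon_6\rangle$. The key arithmetic point is that $2u_1-4\equiv 2\pmod 4$ and $(2u_2+1)s_1$ is $s_1$ times an odd number, so their sum is nonzero in $\Z/8$ unless $s_1$ is odd and happens to cancel — so I need to pin down $s_1$ modulo $4$, or else argue via a $2$-adic valuation estimate. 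Concretely, $2u_1-4$ has $2$-adic valuation exactly $1$; if $s_1$ is even then $(2u_2+1)s_1$ has valuation $\ge 1$ but its odd part differs, so the sum has valuation $1$ (nonzero); if $s_1$ is odd then $(2u_2+1)s_1$ is odd and the sum is odd (nonzero). Either way the coefficient of $\langle\bar\nu_6+\epsilon_6\rangle$ is nonzero in $\Z/8$, hence $q_*(\langle[\iota_7],[\iota_7]\rangle)\ne 0$ in $\pi_{14}(G_2)$.

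The main obstacle I anticipate is bookkeeping the indeterminacy: Lemma \ref{eq 1} only determines $\partial(\sigma_8)$ and $\partial(\Sigma\sigma')$ modulo $j''_*([\nu_5]_7\nu_8^2)$, which is exactly the $\Z/2$-summand that survives under $q_*$, so I must argue that the nonzero contribution lives in the $\Z/8$-summand $\Z/8\{\langle\bar\nu_6+\epsilon_6\rangle\}$ of $\pi_{14}(G_2)$ where that mod-$2$ indeterminacy cannot interfere. Since the $\Z/2$-ambiguity only affects the $i_*([\nu_5^2]\nu_{11})$-coordinate and the coefficient of $\langle\bar\nu_6+\epsilon_6\rangle$ is computed exactly (using that $u_1$ is odd), this separation works cleanly, and it is the reason Lemma \ref{eq 1} was stated with precisely that modulus. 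The only other delicate point is confirming that $q$ acts on the chosen generators as the naive projection $Spin(7)\to G_2$ followed by killing the extra $S^7$; this follows from the definition of the splittings $Spin(8)=Spin(7)\times S^7$ and $Spin(7)\simeq G_2\times S^7$ together with the fact that $i\colon G_2\to Spin(7)$ is a section, so no further work is needed there.
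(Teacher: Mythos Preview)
Your overall strategy matches the paper's, but there are two genuine gaps. First, you never compute $q_*(j''_*([\bar\nu_6+\epsilon_6]))$. You list the two bases of $\pi_{14}(Spin(7))$ and then tacitly assume that $[\bar\nu_6+\epsilon_6]$ projects to $\langle\bar\nu_6+\epsilon_6\rangle$ modulo the $\Z/2$ summand. That is not justified: the two order-$8$ generators $[\bar\nu_6+\epsilon_6]$ and $j_*(\langle\bar\nu_6+\epsilon_6\rangle)$ agree only after projecting to $S^6$, so their difference lies in $\ker\pi''_*$, which is spanned by $[\nu_5]_7\nu_8^2$ \emph{and} $[\eta_5\epsilon_6]_7$. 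Thus there are unknown integers $t_1,t_2$ with $j_*(\langle\bar\nu_6+\epsilon_6\rangle)=[\bar\nu_6+\epsilon_6]+t_1 j_*(i_*([\nu_5^2]\nu_{11}))+t_2[\eta_5\epsilon_6]_7$, and inverting this (using \eqref{eq 2}) gives $q_*(j''_*([\bar\nu_6+\epsilon_6]))\equiv(1-t_2s_1)\langle\bar\nu_6+\epsilon_6\rangle$ modulo the $\Z/2$ summand. So the correct coefficient of $\langle\bar\nu_6+\epsilon_6\rangle$ is $(2u_1-4)(1-t_2s_1)+(2u_2+1)s_1$, not your $(2u_1-4)+(2u_2+1)s_1$.

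Second, even granting your formula, the $2$-adic argument fails. You claim that if $s_1$ is even then ``its odd part differs'' from that of $2u_1-4$; this is unsupported and generally false. For instance $u_1=3$, $u_2=1$, $s_1=2$ gives $(2u_1-4)+(2u_2+1)s_1=2+6=8\equiv 0$ in $\Z/8$. The same obstruction persists with the corrected formula. The paper closes both gaps at once by proving $s_1\equiv 4\pmod 8$: it uses that $\pi''_*([\eta_5\epsilon_6]_7)=0$, computes $\pi''_*([\iota_7'])=\eta_6$, and invokes Toda's relation $\eta_6\sigma'=4\bar\nu_6$ to force $s_1+4s_3\equiv 0\pmod 8$ with $s_3$ odd. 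Once $s_1\equiv 4\pmod 8$, the coefficient becomes $(2u_1-4)+4\cdot(\text{odd})\in\{2,6\}\pmod 8$, which is nonzero. Without pinning down $s_1$ in this way your argument cannot be completed.
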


\begin{proof}
  Consider a commutative diagram with fibration rows
  \begin{equation}
    \label{G_2 Spin(7)}
    \xymatrix{SU(3)\ar[r]^i\ar[d]&G_2\ar[r]^\pi\ar[d]^{j}&S^6\ar@{=}[d]\\
    Spin(6)\ar[r]^{j'}&Spin(7)\ar[r]^{\pi''}&S^6.}
  \end{equation}
  In \cite{Mi1,KM}, $\langle\bar{\nu}_6+\epsilon_6\rangle$ and $[\bar{\nu}_6+\epsilon_6]$ are chosen such that $\pi_*(\langle\bar{\nu}_6+\epsilon_6\rangle)=\pi''_*([\bar{\nu}_6+\epsilon_6])=\bar{\nu}_6+\epsilon_6$, where $\pi_{14}(S^6)=\Z/8\{\bar{\nu}_6\}\oplus\Z/2\{\epsilon_6\}$. Then by Lemma \ref{eq 3},
  $$j_*(\langle\bar{\nu}_6+\epsilon_6\rangle)=[\bar{\nu}_6+\epsilon_6]+t_1j_*(i_*([\nu_5^2]\nu_{11}))+t_2[\eta_5\epsilon_6]_7$$
  for some integers $t_1,t_2$. Thus by \eqref{eq 2} and Lemmas \ref{Samelson 1}, \ref{eq 1}, and \ref{eq 3},
  \begin{multline*}
    \langle[\iota_7],[\iota_7]\rangle\equiv\pm(2u_1-4+s_1-2(u_1t_2-t_2-u_2)s_1)j''_*(j_*(\langle\bar{\nu}_6+\epsilon_6\rangle))\\
    \mod(j''_*(j_*(i_*([\nu_5^2]\nu_{11}))),j''_*([\iota_7'])\sigma').
  \end{multline*}
  Since $q\circ j''\circ j\simeq 1$ and $q_*(j''_*([\iota_7']\sigma'))=0$,
  $$q_*(\langle[\iota_7],[\iota_7])\equiv\pm(2u_1-4+s_1-2(u_1t_2-t_2-u_2)s_1)\langle\bar{\nu}_6+\epsilon_6\rangle\mod i_*([\nu_5^2]\nu_{11}),$$
  and so it suffices to show  $s_1\equiv 4\mod 8$.

  By the definition of $[\eta_5\epsilon_6]_7$ in \cite{KM}, $\pi''_*([\eta_5\epsilon_6]_7)=0$, and so $s_1(\bar{\nu}_6+\epsilon_6)+s_3\pi''_*([\iota_7']\sigma')=0$. Consider a homotopy exact sequence of the lower fibration of \eqref{G_2 Spin(7)}. Since $Spin(6)=SU(4)$ and $\pi_6(SU(6))\cong\pi_6(SU(\infty))=0$, one gets $\pi_6(Spin(6))=\pi_6(SU(4))=0$, and so $\pi''_*\colon\pi_7(Spin(7))\to\pi_7(S^6)$ is surjective. By \cite{To} and \cite{Mi1}, $\pi_7(S^6)=\Z/2\{\eta_6\}$ and $\pi_7(Spin(7))=\Z\{[\iota'_7]\}$, implying $\pi''_*([\iota'_7])=\eta_6$. Thus $s_1(\bar{\nu}_6+\epsilon_6)+s_3\eta_6\sigma'=0$. Now by \cite[p. 64]{To}, $\eta_6\sigma'=4\bar{\nu}_6$, and so $(s_1+4s_3)\bar{\nu}_6+s_1\epsilon_6=0$. Then $s_1$ is even and $s_1+4s_3\equiv 0\mod 8$. On the other hand, since $[\eta_5\epsilon_6]_7$ has order 8, either $s_1$ or $s_3$ must be odd. Then $s_3$ is odd, implying $s_1\equiv 4\mod 8$ as desired.
\end{proof}

\begin{proposition}
  \label{a_1}
  Let $(G,H),\,a_k$ and $p$ be as in Theorem \ref{main}. If $p<a_1$ then the projection $q\colon G\to H$ is not an H-map.
\end{proposition}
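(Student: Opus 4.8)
The plan is to prove the "only if" direction of Theorem \ref{main} (the hard half), namely that when $p<a_1$ the projection $q\colon G\to H$ fails to be an H-map, by applying Lemma \ref{Samelson product} with a carefully chosen pair of classes $a,b\in\pi_*(G)$. The scheme is uniform across the five families: pick $a$ to be the generator of $\pi_{2l_t-1}(G)_{(p)}$ coming from the top sphere $S^{2l_t-1}$ in the factor $G/H$ of the Harris decomposition, so that $q_*(a)=0$ automatically; then exhibit a class $b$ such that the Samelson product $\langle a,b\rangle$ has nonzero image under $q_*$. Since $q$ restricted to the $H$-factor is the identity, $q_*(\langle a,b\rangle)\neq 0$ amounts to showing that $\langle a,b\rangle$ has a nonzero component in $\pi_*(H)_{(p)}$ after projecting away the two sphere factors. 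The existence of such a nontrivial Samelson product in the relevant degree, for $p<a_1$, is precisely what is recorded in the Samelson product computations of \cite{B,HKMO} for the classical and $E_6$ cases, and in Lemma \ref{Samelson product Spin(8)} for the remaining case $(Spin(8),G_2)$ with $p=2$ (which is why that lemma was proved just above).

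Concretely, I would organize the proof as a short case analysis keyed to the table. First I would observe that $a_1$ in each column is exactly the threshold below which the relevant Samelson product is known to be nonzero: for $(SU(2n+1),SO(2n+1))$ one uses the Samelson product of the generator of $\pi_{4n+1}$ (or its image) against a generator detected mod $p$ by $\mathcal{P}^1$ acting on $c_2\in H^*(BSU(2n+1))$ as in Lemma \ref{P^1 SU}; the condition $(2n+1)\le p<2(2n+1)-1=a_1-?$ — more precisely $p<a_1=2(2n+1)$ for $k=1$ — is what forces the relevant $\mathcal{P}^1$-coefficient to be nonzero. Similarly for $(SU(2n),Sp(n))$ one invokes Lemma \ref{P^1 SU}(2), for $(SO(2n),SO(2n-1))$ Lemmas \ref{P^1 SO(2n)} and \ref{SO(8)} (with the caveat that here the obstruction is detected only in a range, matching the weaker statement 2(b) of Theorem \ref{main}), for $(E_6,F_4)$ Corollary \ref{P^1 E_6} together with the known Samelson products from \cite{HKMO}, and for $(Spin(8),G_2)$ exactly Lemma \ref{Samelson product Spin(8)}. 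In each case the class $a$ lies in the kernel of $q_*$ because it comes from the $G/H$-sphere factor, the class $b$ is a generator whose Samelson product with $a$ is nonzero, and $q_*\langle a,b\rangle\neq 0$ follows because the component of $\langle a,b\rangle$ in the image of the Samelson product lands in $\pi_*(H)_{(p)}$; then Lemma \ref{Samelson product} finishes it.

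The main obstacle is not the logical structure — which is a one-line application of Lemma \ref{Samelson product} once the inputs are in place — but rather bookkeeping: for each family one must verify that the specific Samelson product quoted from \cite{B,HKMO} (or computed in Lemma \ref{Samelson product Spin(8)}) genuinely survives the projection $q_*$, i.e.\ that its nonzero part is not concentrated in the two discarded sphere factors $S^{2l_t-1}\times S^{2l_t-1}$ but genuinely meets $\pi_*(H)$. This is where the cohomological computations of Section 3 do the real work: the classes $\mathcal{P}^1 c_2$, $\mathcal{P}^1 p_1$, $\mathcal{P}^1 x_4$, etc., detect the Samelson products in $H^*(BH)$ (pulled back along the inclusion), so a nonzero monomial involving a generator of $H^*(BH)$ certifies that $q_*$ of the Samelson product is nonzero. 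I would therefore phrase each case as: "by Lemma/Corollary [X], $\mathcal{P}^1$ of [generator] includes the monomial [M], hence the corresponding Samelson product $\langle a,b\rangle$ has nonzero image in $\pi_*(H)_{(p)}$; since $q_*(a)=0$, Lemma \ref{Samelson product} shows $q$ is not an H-map." The one genuinely delicate case, $(Spin(8),G_2)$ at $p=2$, has already been isolated and settled in Lemma \ref{Samelson product Spin(8)}, so for that case the proof is simply a citation of that lemma with $a=[\iota_7]$.
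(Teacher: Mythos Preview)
Your high-level plan—apply Lemma \ref{Samelson product} with a class $a$ killed by $q_*$ and a partner $b$ such that $q_*\langle a,b\rangle\ne 0$—is exactly the paper's strategy, and your treatment of $(Spin(8),G_2)$ at $p=2$ via Lemma \ref{Samelson product Spin(8)} is correct. But the rest of the proposal conflates two distinct arguments in the paper and, as written, does not go through.

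The $\mathcal{P}^1$ computations of Section~3 and the criterion of Lemma \ref{P^1 criterion} are the engine of Proposition \ref{a_k}, i.e.\ the range $a_{k-1}\le p<a_k$ with $k\ge 2$; they are \emph{not} used in Proposition \ref{a_1}. Look at the hypotheses in Lemma \ref{P^1 SU}: the monomials displayed there are valid for $(k-1)(2n+1)\le p$ with $k\ge 2$, i.e.\ for $p\ge 2n+1=a_1$, precisely the complement of the range you need. Your sentence ``more precisely $p<a_1=2(2n+1)$'' is a symptom of this confusion: for $(SU(2n+1),SO(2n+1))$ one has $a_1=2n+1$, not $2(2n+1)$; you have computed $a_2$. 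Similarly, invoking Lemmas \ref{P^1 SO(2n)} and \ref{SO(8)} for $(SO(2n),SO(2n-1))$ misses that in Proposition \ref{a_1} this case is vacuous (the relevant inequality $p<a_1-n+2=2$ is never satisfied for an odd prime).

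The paper's actual argument for the classical and $E_6$ cases is purely homotopy-theoretic and does not touch Section~3. For $(SU(2n+1),SO(2n+1))$ with $p<2n+1$ one takes $a=\epsilon_{p-1}$ (not the top-sphere class $\epsilon_{2n}$) and $b=\epsilon_{2n-p+1}$; Bott \cite{B} gives $\langle a,b\rangle\ne 0$ in $\pi_{4n+2}(SU(2n+1))$, one checks that $\pi_{4n+2}(SU(2n+1))$ is cyclic with $q_*$ an isomorphism there, and $q_*(\epsilon_{p-1})=0$ because $\pi_{2p-1}(SO(2n+1))=0$. The $(SU(2n),Sp(n))$ case is identical. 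For $(E_6,F_4)$ at $p=5$ one uses the decomposition $E_6\simeq F_4\times B(9,17)$, the restriction $\epsilon\vert_{S^9}$ of the inclusion of the second factor, the nontriviality of $\langle\epsilon\vert_{S^9},\epsilon\vert_{S^9}\rangle$ from \cite{HKMO}, and the vanishing $\pi_{18}(B(9,17))=0$ to conclude $q_*\langle\epsilon\vert_{S^9},\epsilon\vert_{S^9}\rangle\ne 0$. None of these steps involves $\mathcal{P}^1$; the inputs are the specific Samelson products already computed in \cite{B,HKMO} together with elementary homotopy-group information to verify that the product survives $q_*$. Your proposal correctly names those references but then routes the verification through the wrong machinery.
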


\begin{proof}
  Let $(G,H)=(SU(2n+1),SO(2n+1))$. Then $p<2n+1$. Since $SO(2n+1)$ is a direct summand of $SU(2n+1)$, $\pi_*(SO(2n+1))$ is a direct summand of $\pi_*(SU(2n+1))$ as well. As in \cite{Mi2} $\pi_{4n+2}(SU(2n+1))$ is a cyclic group. Since $SO(2n+1)\simeq Sp(n)$ where we are localizing at an odd prime $p$, $\pi_{4n+2}(SO(2n+1))\ne 0$ as in \cite{Mi2}. Then the projection $q$ induces an isomorphism in $\pi_{4n+2}$. Let $\epsilon_i$ be a generator of $\pi_{2i+1}(SU(2n+1))\cong\Z_{(p)}$ for $i=1,\ldots,2n-1$. By \cite{B} that the Samelson product $\langle\epsilon_{2n-p+1},\epsilon_{p-1}\rangle$ in $SU(2n+1)$ is non-trivial. Then in particular, since $q$ is an isomorphism in $\pi_{4n+2}$, $q_*(\langle\epsilon_{2n-p+1},\epsilon_{p-1}\rangle)\ne 0$. On the other hand, since $\pi_{2p-1}(SO(2n+1))=0$ for $p<2n+1$, $q_*(\epsilon_{p-1})=0$. Thus the proof is done by Lemma \ref{Samelson product}.

  The case $(G,H)=(SU(2n),Sp(n))$ follows from the same argument using $\langle\epsilon_{2n-p+1},\epsilon_{p-1}\rangle$ in $SU(2n)$, where $p<2n-1$. There is nothing to do for $(G,H)=(SO(2n),SO(2n-1))$ since $a_1=1$.

  Let $(G,H)=(E_6,F_4)$. Then $p=5$, and so
  $$E_6\simeq F_4\times B(9,17)$$
  as in \cite{Mi2} such that the projection $q\colon E_6\to F_4$ is identified with the projection $F_4\times B(9,17)\to F_4$, where $B(9,17)$ is a certain $S^9$-bundle over $S^{17}$. Let $\epsilon\colon B(9,17)\to E_6$ be the inclusion. In \cite{HKMO} it is shown that the Samelson product $\langle\epsilon,\epsilon\rangle$ is non-trivial, and its proof actually shows that $\langle\epsilon\vert_{S^9},\epsilon\vert_{S^9}\rangle$ is non-trivial, where $\epsilon\vert_{S^9}$ is the restriction of $\epsilon$ to the bottom cell $S^9\subset B(9,17)$. The homotopy groups of $B(9,17)$ are calculated in \cite{K} such that $\pi_{18}(B(9,17))=0$. Then $q_*(\langle\epsilon\vert_{S^9},\epsilon\vert_{S^9}\rangle)\ne 0$. On the other hand, $q_*(\epsilon\vert_{S^9})=0$, and so by Lemma \ref{Samelson product} $q$ is not an H-map.

  Let $(G,H)=(Spin(8),G_2)$. Then $p=2$. By \cite{Mi1}, the inclusion $G_2\to Spin(8)$ is injective in $\pi_{14}$. Then by Lemmas \ref{Samelson product} and \ref{Samelson product Spin(8)}, $q$ is not an H-map. Thus the proof is complete.
\end{proof}

We set notation on cohomology. Let $G$ be a connected Lie group whose integral homology has no $p$-torsion. Recall from \cite{He} that there is an isomorphism of unstable algebras
\begin{equation}
  \label{PG}
  H^*(P^kG)\cong H^*(BG)/\widetilde{H}^*(BG)^{k+1}\oplus S
\end{equation}
for some unstable algebra $S$ depending on $G$ and $k$, where the natural map $j_k\colon P^kG\to BG$ induces the obvious projection in cohomology. Let $(G,H)$ and $p$ be in Theorem \ref{main}. The cohomology of $BG$ and $BH$ are given by
\begin{equation}
  \label{H(BG)}
  H^*(BG)=\Z/p[x_{n_1},\ldots,x_{n_r}]\quad\text{and}\quad H^*(BH)=\Z/p[y_{m_1},\ldots, y_{m_s}]
\end{equation}
where $|x_i|=|y_i|=2i$.

If $m_i$ in $(m_1,\ldots,m_s)$ and $n_j$ in $(n_1,\ldots,n_r)$ are equal as a member of the sequence $(n_1,\ldots,n_r)$, then we say that $m_i=n_j$ in the type of $G$, where $(m_1,\ldots,m_s)$ is a subsequence of $(n_1,\ldots,n_r)$. Now we state a cohomological criterion for the projection $G\to H$ not being an $A_l$-map.

\begin{lemma}
  \label{P^1 criterion}
  Let $(G,H)$ and $p$ be as in Theorem \ref{main}, and let $j\colon H\to G$ be the inclusion. Suppose that presentations \eqref{H(BG)} satisfy
  $$j^*(x_{n_i})=
  \begin{cases}
    \pm y_{m_j}&n_i=m_j\text{ in the type of }G\\
    0&\text{otherwise}.
  \end{cases}$$
  Suppose also that there are an integer $m_k$ in the type of $H$ and a monomial $M=ax_{n_{i_1}}\cdots x_{n_{i_l}}\in H^*(BG)$ with $a\ne 0$ and $n_{i_1}\le\cdots\le n_{i_l}$ satisfying the following conditions:
  \begin{enumerate}
    \item $n_{i_j}>m_k$ for $j\ge 2$ and $m_k-n_{i_1}$ is not a sum of integers in the type of $G$;
    \item for any map $f\colon H^*(BH)\to H^*(BG)$ satisfying $f(y_{m_i})\equiv\pm x_{m_i}\mod\widetilde{H}^*(BG)^2$ for all $i$, there is no polynomial $P\in\mathrm{Im}\,f$ such that
    $$P\ge M\mod I$$
    where $m_k=n_h$ in the type of $G$ and $I=(x_{n_1},\ldots,\widehat{x_{n_{i_1}}},\ldots,x_{n_h})$.
  \end{enumerate}
  Then the projection $q\colon G\to H$ is not an $A_l$-map.
\end{lemma}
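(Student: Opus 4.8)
The plan is to argue by contradiction using the projective-space characterization of $A_l$-maps from Lemma \ref{A_n} together with the additive splitting \eqref{PG} of the cohomology of $P^lG$. Suppose the projection $q\colon G\to H$ were an $A_l$-map. Since $H$ is a direct $p$-local factor of $G$ with complement a product of odd spheres (by \eqref{Harris} and $p$-regularity), the restriction $q\vert_H$ may be taken to be the identity, which is an $A_l$-map; hence by Proposition \ref{A_n refined} there is a map $\widehat{P}^l(H,S^{2l_1-1},\ldots,S^{2l_t-1})\to BH$ extending the restriction of $\bar q$. Composing with the inclusion $BH\to BG$ and using that $j_l\colon P^lG\to BG$ realizes the projection in \eqref{PG}, together with Lemma \ref{retraction proj} to pass between $\widehat{P}^l$ and $P^l$, I would extract a map $\varphi\colon H^*(BH)/\widetilde H^*(BH)^{l+1}\to H^*(P^lG)\cong H^*(BG)/\widetilde H^*(BG)^{l+1}\oplus S$ that is compatible with the projections to $\Sigma$-cohomology. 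Projecting onto the $H^*(BG)/\widetilde H^*(BG)^{l+1}$ summand and lifting (monomials of length $\le l$ lift uniquely up to the ideal generated by excluded variables) produces an algebra map $f\colon H^*(BH)\to H^*(BG)$ with $f(y_{m_i})\equiv \pm x_{m_i}\bmod \widetilde H^*(BG)^2$, by the hypothesis on $j^*$.

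The next step is to exploit naturality of the Steenrod operation $\mathcal P^1$. Since $q$ being an $A_l$-map forces the diagram at the level of $P^l$ to commute, $f$ commutes with $\mathcal P^1$ modulo the relevant ideals. Condition (1) is exactly what is needed to control where $\mathcal P^1 x_{n_{i_1}}$ can land: because $m_k - n_{i_1}$ is not a sum of integers in the type of $G$, the only monomials of the correct degree that can appear in $f(\mathcal P^1 y_{m_k})$, after reducing modulo $I=(x_{n_1},\ldots,\widehat{x_{n_{i_1}}},\ldots,x_{n_h})$, are those divisible by $x_{n_{i_1}}$; and the constraint $n_{i_j}>m_k$ for $j\ge 2$ pins down that in degree $2m_k + 2(p-1)$ the class $\mathcal P^1 x_{n_{i_1}}$ (computed via the $\mathcal P^1$-formulas of Section 3) contributes precisely the monomial $M$. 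Applying $f$ to the identity $\mathcal P^1 y_{m_k} = (\text{polynomial in }y\text{'s})$ and reducing modulo $I$, one concludes that $f(\mathcal P^1 y_{m_k}) = \mathcal P^1 f(y_{m_k})$ must lie in $\mathrm{Im}\,f$ and must be $\ge M \bmod I$. This directly contradicts condition (2), so $q$ is not an $A_l$-map.

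The main obstacle will be the bookkeeping in the middle paragraph: making precise the passage from the homotopy-commutative extension over $\widehat{P}^l(H,S^{2l_1-1},\ldots,S^{2l_t-1})$ to a genuine algebra map $f$ on $H^*(BH)$ with the stated normalization, and verifying that the degree range $2m_k$ (which is at most roughly $2a_l$, hence $<2p$ in the regime where $f$ is forced to exist) keeps us safely below the truncation degree $l+1$ so that the relations used are not obscured by the $\widetilde H^*(BG)^{l+1}$ quotient. In particular one must check that $m_k + (p-1) \le l\cdot(\text{something})$ is not an issue — i.e., that $M$ has length $\le l$ so it genuinely survives in $H^*(BG)/\widetilde H^*(BG)^{l+1}$ — which is precisely why $M = a x_{n_{i_1}}\cdots x_{n_{i_l}}$ is taken with exactly $l$ factors in the statement. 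Once this truncation issue is dispatched, the $\mathcal P^1$ computation and the contradiction with (2) are essentially formal.
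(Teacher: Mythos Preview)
Your overall strategy---assume $q$ is an $A_l$-map, extend $\bar q$ over the $l$-th projective space, use the splitting \eqref{PG} to produce a map $f$ satisfying the normalization in condition (2), and derive a contradiction via $\mathcal P^1$---is the paper's approach. But the route you take to the extension introduces a hypothesis that is not available in the regime where the lemma is applied.

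The gap is your appeal to Proposition \ref{A_n refined} and the reduced projective space $\widehat P^l(H,S^{2l_1-1},\ldots,S^{2l_t-1})$. That proposition concerns the \emph{product} $A_l$-structure on $H\times S^{2l_1-1}\times\cdots\times S^{2l_t-1}$, whereas what you assume for contradiction is that $q$ is an $A_l$-map for the \emph{standard} structure on $G$. These two $A_l$-structures are only known to agree when $p\ge b_k$ (Proposition \ref{regular A_k refined}), but this lemma is used precisely in the range $p<a_k\le b_k$, so you cannot invoke Proposition \ref{A_n refined} (nor Lemma \ref{retraction proj} to pass between $\widehat P^l$ and $P^lG$). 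The composition with $BH\to BG$ is also a detour: the target you need is $BH$, not $BG$.

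The paper avoids all of this by applying Lemma \ref{A_n} directly: if $q$ is an $A_l$-map then $\bar q$ extends to $\bar q_l\colon P^lG\to BH$, with $P^lG$ built from the standard loop structure on $G$. The map $f$ is then any lift of the composite $H^*(BH)\xrightarrow{\bar q_l^*}H^*(P^lG)\to H^*(BG)/\widetilde H^*(BG)^{l+1}$ to $H^*(BG)$; it need not be an algebra map, only satisfy $f(y_{m_i})\equiv\pm x_{m_i}\bmod\widetilde H^*(BG)^2$. Finally, the role of condition (1) is not to control $\mathcal P^1 x_{n_{i_1}}$ as you write, but to force the decomposable part of $\bar q_l^*(y_{m_k})$ into $I^2$: any decomposable monomial of degree $2m_k$ not in $I^2$ would contain a factor $x_{n_{i_1}}$, and ``$m_k-n_{i_1}$ is not a sum of integers in the type of $G$'' rules that out. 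The Cartan formula then gives $\mathcal P^1(I^2)\subset I$, whence $\mathcal P^1\bar q_l^*(y_{m_k})\equiv\pm\mathcal P^1 x_{m_k}\ge M\bmod I+S$, and this contradicts condition (2).
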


\begin{proof}
  We assume $q$ is an $A_l$-map and show a contradiction. By Lemma \ref{A_n} the adjoint $\bar{q}\colon\Sigma G\to BH$ extends to $\bar{q}_l\colon P^lG\to BH$. Let $S$ be as in \eqref{PG}. By the condition (1),
  \begin{equation}
    \label{q_l}
    \bar{q}_l^*(y_{m_i})\equiv\pm x_{m_i}\mod I^2+S.
  \end{equation}
  Then by the Cartan formula,
  \begin{equation}
    \label{M}
    \bar{q}_l^*(\mathcal{P}^1y_{m_k})\equiv\mathcal{P}^1\bar{q}_l^*(y_{m_k})\equiv\mathcal{P}^1x_{m_k}\ge M\not\equiv 0\mod I+S.
  \end{equation}
  Let $f\colon H^*(BH)\to H^*(BG)$ be the composite of a lift of $\bar{q}_l^*\colon H^*(BH)\to H^*(P^lG)$ through the projection $H^*(BG)\oplus S\to H^*(P^lG)$ under the isomorphism \eqref{PG} and the projection $H^*(BG)\oplus S\to H^*(BG)$. Then by \eqref{q_l}, $f(y_{m_i})\equiv\pm x_{m_i}\mod\widetilde{H}^*(BG)^2$, and so by condition (2) there is no $P\in\mathrm{Im}\,f$ such that $P\ge M\mod I$. Thus there is no $Q\in\mathrm{Im}\,\bar{q}_l^*$ such that $Q\ge M\mod I+S$, which contradicts \eqref{M}. Therefore the proof is complete.
\end{proof}

\begin{proposition}
  \label{a_k}
  Let $(G,H),\,a_k$ and $p$ be as in Theorem \ref{main}, and let $k\ge 2$. Then the following statements hold:
  \begin{enumerate}
    \item if $a_{k-1}\le p<a_k$ then the projection $G\to H$ is not an $A_k$-map except for $(G,H)=(SO(2n),SO(2n-1))$;
    \item if $a_{k-1}-n+2\le p<a_k-n+2$ then the projection $q\colon SO(2n)\to SO(2n-1)$ is not an $A_k$-map.
  \end{enumerate}
\end{proposition}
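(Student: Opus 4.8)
The plan is to treat the five families of pairs $(G,H)$ one at a time and, for each admissible $p$, to contradict Lemma \ref{P^1 criterion} applied with $l=k$; the obstruction monomial $M$ required there is read off from the $\mathcal{P}^1$-computations of the previous section. So the first step is to fix presentations \eqref{H(BG)} meeting the hypothesis of Lemma \ref{P^1 criterion}, namely $j^*(x_{n_i})=\pm y_{m_i}$ when $n_i=m_i$ lies in the type of $G$ and $j^*(x_{n_i})=0$ otherwise. For $(SU(2n+1),SO(2n+1))$ and $(SU(2n),Sp(n))$ this already holds, by $c^*(c_{2i})=(-1)^ip_i$, $c^*(c_{2i+1})=0$ and the analogous relation between Chern classes and symplectic Pontrjagin classes; for $(SO(2n),SO(2n-1))$ by $j^*p_i=p_i$ and $j^*p_n=e_n^2$; and for $(E_6,F_4)$, $(Spin(8),G_2)$ after subtracting suitable decomposables, taken from the formulas of the previous section, from the generators of $H^*(BE_6)$ and $H^*(BSpin(8))$, so that $x_{10},x_{18}$, respectively $p_2,e_4$, become the $G/H$-type generators and restrict to $0$.

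Next, for each prime $p$ in the stated interval I extract $M$. After discarding even primes and multiples of $3$, the interval $[a_{k-1},a_k)$ --- respectively $[a_{k-1}-n+2,a_k-n+2)$ for $SO(2n)$ --- contains only the one or two arithmetic values at which Lemmas \ref{P^1 SU}, \ref{SO(8)}, \ref{P^1 SO(2n)} and Corollary \ref{P^1 E_6} exhibit a monomial inside $\mathcal{P}^1$ of an $H$-type generator $y_{m_k}$ of $H^*(BG)$ (for instance $\mathcal{P}^1c_2\ge(-1)^{k-1}c_{2n+1}^{k-1}c_{p-(k-1)(2n+1)+1}$ for $SU(2n+1)$, or the $x_{24}^{k-2}x_{18}^2$- and $x_{24}^{k-2}x_{18}x_{10}$-terms of $\mathcal{P}^1x_4$, $\mathcal{P}^1x_{16}$ for $E_6$). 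In each case $M$ is a product of exactly $k$ generators whose smallest factor $x_{n_{i_1}}$ has $m_k-n_{i_1}$ negative or too small to be a sum of integers in the type of $G$, and whose remaining $k-1$ factors all have index $>m_k$; this gives condition (1) of Lemma \ref{P^1 criterion}.

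The crux, and the step I expect to be the main obstacle, is condition (2): that $M$ cannot be produced modulo $I$ by any polynomial in the image of a map $f\colon H^*(BH)\to H^*(BG)$ with $f(y_{m_i})\equiv\pm x_{m_i}\bmod\widetilde{H}^*(BG)^2$. The structural facts I would use are that every generator of $H^*(BG)$ occurring in some $f(y_{m_i})$ has index at most $m_s$, that a $G/H$-type generator can enter $\mathrm{Im}\,f$ only inside a decomposable term of word length $\ge 2$ of some $f(y_{m_i})$, and hence that modulo $I$, which already contains the low-index $H$-type generators, $\mathrm{Im}\,f$ collapses to a polynomial ring on very few generators, filtered by word length. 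In each family $M$ violates this: for $SU(2n+1)$ it contains $c_{2n+1}$, whose index exceeds $m_s=2n$, so it occurs in no $f(y)$; for $SU(2n)$ it contains the odd-indexed $c_{2n-1}$, which by a degree count likewise occurs in no $f(y)$; for $E_6$ it contains $x_{18}^2$, and $H^6(BE_6)=0$ leaves no room for an $x_{18}$ factor in any decomposable term of an $f(y)$; for $Spin(8)$ it contains $p_2^2$, $e_4^2$ or $p_2e_4^2$, all of which vanish in $\mathrm{Im}\,f$ modulo $(p_1)$; and for $SO(2n)$ it contains $e_n^2$, and a word-length count shows $e_n^2$ cannot occur in the word-length-$k$ part of any element of $\mathrm{Im}\,f$. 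This last count only runs for $p\ge 2(k-2)(n-1)+2=a_{k-1}-n+2$, which is exactly the range of Lemma \ref{P^1 SO(2n)} and the reason part (2) loses the term $n-2$. Once condition (2) holds, Lemma \ref{P^1 criterion} delivers the conclusion for those $k$ and $p$.

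Finally, a few leftover primes lie outside the reach of Lemma \ref{P^1 criterion}: the prime $p=2$ for $(Spin(8),G_2)$, excluded from this proposition and handled in Proposition \ref{a_1} via the Samelson-product computation of Lemma \ref{Samelson product Spin(8)} together with Lemma \ref{Samelson product}; and those odd primes $p$ in the $Spin(8)$ range (notably $p=5$) for which the previous section only supplies a $\mathcal{P}^1$-obstruction on a $G/H$-type class. For the latter I would instead compute the relevant Samelson product in $Spin(8)$ following \cite{B,HKMO}, exploiting that $G_2\to Spin(8)$ is injective on the pertinent homotopy group while the $G/H$-classes are killed by the projection, and conclude with Lemma \ref{Samelson product}.
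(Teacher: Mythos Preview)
Your overall strategy---feed the $\mathcal{P}^1$ computations of Section~3 into Lemma~\ref{P^1 criterion} case by case---is exactly what the paper does, and your verification of conditions (1) and (2) in the generic situation is along the right lines.

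The genuine gap is that you never check whether the \emph{coefficient} of the obstruction monomial is nonzero mod $p$. In two places it is not, and the paper has to bypass Lemma~\ref{P^1 criterion} entirely:
\begin{itemize}
\item For $(E_6,F_4)$ with $k=2$, the prime $p=7$ lies in $[a_1,a_2)=[7,19)$ and corresponds to $p=12k-17$ in Corollary~\ref{P^1 E_6}. The displayed coefficient of $x_{18}x_{10}$ in $\mathcal{P}^1x_{16}$ is $\frac{35}{60}$, and $35\equiv 0\pmod{7}$, so the monomial vanishes. The paper instead uses the Samelson product $\langle\epsilon,\epsilon\rangle$ with $\epsilon\colon S^{17}\to E_6$ from \cite{HKMO}, together with $\pi_{34}(S^9)=\pi_{34}(S^{17})=0$, to invoke Lemma~\ref{Samelson product}.
\item For $(SO(2n),SO(2n-1))$ with $k=2$, the second clause of Lemma~\ref{P^1 SO(2n)} gives $\mathcal{P}^1p_{n-(p-1)/2}\ge(-1)^{(p-1)/2}\,2n\,e_n^2$, and when $p=n$ the coefficient $2n$ vanishes. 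The paper handles this prime via Mahowald's computation \cite{Ma} of $\langle\theta,\theta\rangle$ in $SO(2n)$ and $\pi_{4n-2}(S^{2n-1})=0$.
\end{itemize}
Your list of ``leftover primes'' misses both of these; the Samelson-product input you cite from \cite{B,HKMO} is used here, not in the $Spin(8)$ case.

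Conversely, your worry about $(Spin(8),G_2)$ at $p=6k-7$ does not appear in the paper: it simply cites Lemmas~\ref{SO(8)} and~\ref{P^1 criterion} uniformly for all $p\in[a_{k-1},a_k)$. You are right that $p_2$ is a $G/H$-type generator and so does not literally fit the hypothesis ``$m_k$ in the type of $H$'' of Lemma~\ref{P^1 criterion}; this is a point where the paper is terser than you, not a gap in your argument.
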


\begin{proof}
  (1) Let $(G,H)=(SU(2n+1),SO(2n+1))$. The inclusion $c\colon SO(2n+1)\to SU(2n+1)$ satisfies $c^*(c_{2i})=(-1)^ip_i$ and $c^*(c_{2i+1})=0$. Then the presentations \eqref{H(BSU)} and \eqref{H(BSO)} satisfy the condition in Lemma \ref{P^1 criterion}. By a dimensional consideration, we see that the monomial $(-1)^{k-1}c_{2n+1}^{k-1}c_{p-(k-1)(2n+1)+1}$ satisfies the condition (2) of Lemma \ref{P^1 criterion}. Thus by Lemmas \ref{P^1 SU} and \ref{P^1 criterion}, the projection $q$ is not an $A_k$-map for $a_{k-1}\le p<a_k$.

  Let $(G,H)=(SU(2n),Sp(n))$. The cohomology of $BSp(n)$ is given by
  $$H^*(BSp(n))=\Z/p[q_1,\ldots,q_n]$$
  where $q_i$ is the symplectic Pontrjagin class. Then the inclusion $c'\colon Sp(n)\to SU(2n)$ satisfies $(c')^*(c_{2i})=(-1)^iq_i$ and $(c')^*(c_{2i+1})=0$. Thus by Lemmas Lemmas \ref{P^1 SU} and \ref{P^1 criterion}, $q$ is not an $A_k$-map for $a_{k-1}\le p<a_k$.

  Let $(G,H)=(E_6,F_4)$. The cohomology of $BF_4$ is given by
  $$H^*(BF_4)=\Z/p[x_4,x_{12},x_{16},x_{24}],\quad|x_i|=i.$$
  In \cite{HKO}, it is shown that we may choose generators of $H^*(BF_4)$ such that the inclusion $j\colon F_4\to E_6$ satisfies $j^*(x_i)=x_i$ for $i=4,12,16,24$ and $j^*(x_i)=0$ for $i=10,18$. Then by Corollary \ref{P^1 E_6} and Lemma \ref{P^1 criterion}, $q$ is not an $A_k$-map for $a_{k-1}\le p<a_k$  except for $p=7$ with $k=2$. Let $p=7$. We aim to show that $q$ is not an H-map. As in \cite{Mi2}, there is a homotopy equivalence
  $$E_6\simeq F_4\times S^9\times S^{17}.$$
  Let $\epsilon\colon S^{17}\to E_6$ be the inclusion. Then by \cite{HKMO} the Samelson product $\langle\epsilon,\epsilon\rangle$ is non-trivial. By \cite{To}, $\pi_{34}(S^9)=\pi_{34}(S^{17})=0$, and so $q_*(\langle\epsilon,\epsilon\rangle)\ne 0$. On the other hand, $q_*(\epsilon)=0$. Thus by Lemma \ref{Samelson product}, $q$ is not an H-map.

  Let $(G,H)=(Spin(8),G_2)$ and $j\colon G_2\to Spin(8)$ be the inclusion. Since $BSO(8)\simeq BSpin(8)$ at the odd prime $p$,
  $$H^*(BSpin(8))=\Z/p[p_1,p_2,p_3,e_4].$$
  The cohomology of $BG_2$ is given by
  $$H^*(BG_2)=\Z/p[x_4,x_{12}],\quad|x_i|=i$$
  and so by a degree reason, we may assume
  $$j^*(p_1)=x_4,\quad j^*(p_2)=0,\quad j^*(p_3)=x_{12},\quad j^*(e_4)=0.$$
  Then by Lemmas \ref{SO(8)} and \ref{P^1 criterion}, $q$ is not an $A_k$-map for $a_{k-1}\le p<a_k$.

  (2) Let $(G,H)=(SO(2n),SO(2n-1))$. The inclusion $j\colon SO(2n-1)\to SO(2n)$ satisfies $j^*(p_i)=p_i$ and $j^*(e_n)=0$. Then by Lemmas \ref{P^1 SO(2n)} and \ref{P^1 criterion}, one gets that $q$ is not an $A_k$-map for $a_{k-1}-n+2\le p<a_k-n+2$ unless $p=n$ for $k=2$. Let $p=n$ and suppose $q$ is an H-map. Let $\theta\colon S^{2n-1}\to S^{2n-1}\times SO(2n-1)\simeq SO(2n)$ be the inclusion. In \cite{Ma} it is shown that the Samelson product $\langle\theta,\theta\rangle$ in $SO(2n)$ is non-trivial. Then since $\pi_{4n-2}(S^{2n-1})=0$ as in \cite{To}, $q_*(\langle\theta,\theta\rangle)$ is non-trivial. But since $q_*(\theta)=0$, we obtain a contradiction by Lemma \ref{Samelson product}. Thus $q$ is not an H-map.
\end{proof}

\begin{proof}
  [Proof of Theorem \ref{main}]
  The proof is done by Theorem \ref{main general} and Propositions \ref{a_1} and \ref{a_k} except that $q\colon E_6\to F_4$ is an $A_k$-map for $p=12k-5$. Let $p=12k-5$. From the proofs of Theorem \ref{main general} and Proposition \ref{regular A_k refined}, one can see that it suffices to show that the adjoint of the identity map $\Sigma E_6\to BE_6$ extends to $X\to BE_6$, where $X$ is as in the proof of Proposition \ref{regular A_k refined}. Note that $X=X^{(24k-12)}\cup e^{24k-6}$. Then since $\pi_{2i-1}(BE_6)=0$ for $i\le 12k-6$ and $12k-3$ by \cite{To}, there is an extension $X\to BE_6$ as desired. Thus the proof is complete.
\end{proof}

\begin{proof}
  [Proof of Corollary \ref{C_n}]
  The equivalence of (2) and (3) is proved in \cite{HKT}, and Saumell \cite{Sa} proved that $SU(2n+1)$ is a Williams $C_k$-space if and only if $p>k(2n+1)$. Thus the result follows from Theorem \ref{main}.
\end{proof}

\end{document}